\documentclass[a4paper, 11pt]{amsart}

\usepackage{enumitem, amsmath, amssymb, amsthm, color, tikz, caption, tikz-cd, booktabs, array, multirow, hyperref, mathrsfs, calc, mathtools, bigints, verbatim, stmaryrd, bm}
\usepackage[capitalize]{cleveref}
\usepackage[savepos]{zref}
\usepackage[toc]{appendix}
\usetikzlibrary{calc, intersections, decorations.markings, decorations.pathmorphing, shapes, arrows.meta}

\newcommand{\lb}{\left(}
\newcommand{\rb}{\right)}
\newcommand{\eps}{\varepsilon}
\newcommand{\Q}{\mathbb{Q}}
\newcommand{\R}{\mathbb{R}}
\newcommand{\C}{\mathbb{C}}
\newcommand{\Z}{\mathbb{Z}}
\renewcommand{\P}{\mathbb{P}}
\newcommand{\pd}{\partial}
\newcommand{\diff}{\mathrm{d}}
\renewcommand{\phi}{\varphi}
\newcommand{\ip}[2]{\langle #1, #2 \rangle}
\newcommand{\lspan}[1]{\left\langle #1 \right\rangle}
\let\oldref\ref
\renewcommand{\eqref}[1]{\textup{(\oldref{#1})}}
\renewcommand{\tilde}{\widetilde}
\newcommand{\conj}[1]{\overline{#1}}
\newcommand{\ev}{\operatorname{ev}}
\newcommand{\SL}{\mathrm{SL}}
\newcommand{\PSL}{\mathrm{PSL}}
\newcommand{\SU}{\mathrm{SU}}
\newcommand{\U}{\mathrm{U}}
\newcommand{\PSU}{\mathrm{PSU}}
\newcommand{\crit}{\operatorname{Crit}}
\newcommand{\CO}{\operatorname{\mathcal{CO}}}
\newcommand{\codim}{\operatorname{codim}} 

\renewcommand{\ker}{\operatorname{ker}}

\newcommand{\End}{\operatorname{End}}
\newcommand{\rank}{\operatorname{rank}}
\newcommand{\gr}{\operatorname{gr}}
\newcommand{\qcup}{\mathbin{*}}
\newcommand{\Char}{\operatorname{char}}
\newcommand{\PD}{\operatorname{PD}}
\newcommand{\id}{\operatorname{id}}
\newcommand{\begen}{\begin{enumerate}[label=(\roman*), align=right]}
\newcommand{\ssE}[1]{E_{#1}} 
\newcommand{\symp}{\mathrm{Symp}}
\newcommand{\hol}{\operatorname{hol}}

\newcommand{\flow}[7][]{\begin{scope}[#1]
\pgfmathsetseed{#4}
\draw[decorate, decoration={random steps, segment length=0.17cm, amplitude=0.08cm, pre=lineto, pre length=0.1cm, post=lineto, post length=0.1cm}, rounded corners=0.04cm, ->, shorten >=-1pt, line cap=butt] #2 -- ($#2!0.5!#3$) node(midpt){};
\draw ($(midpt)+(#6, #7)$) node{\footnotesize #5};
\draw[decorate, decoration={random steps, segment length=0.17cm, amplitude=0.08cm, pre=lineto, pre length=0.1cm, post=lineto, post length=0.1cm}, rounded corners=0.04cm, line cap=butt] ($#2!0.5!#3$) -- #3;
\end{scope}
} 

\newcommand{\disc}[4]{\draw ($(#2, 0)+#3*(0, 1)$) arc (0:180: #2*0.5-#1*0.5 and #2*0.1-#1*0.1);
\draw[fill=white, fill opacity=0.65] ($(#1, 0)+#3*(0, 1)$) arc (180:360: #2*0.5-#1*0.5 and #2*0.1-#1*0.1) -- ($(#2, 0)+#3*(0, 1)$) arc (0:180: #2*0.5-#1*0.5 and #2*0.5-#1*0.5) -- cycle;
\draw ($(#1*0.5+#2*0.5, #2*0.25-#1*0.25)+#3*(0, 1)$) node{#4};
} 

\newcommand{\critpt}[7][]{\draw (#2, #3) node[blob, #1](#6){};
\draw (#2+#4, #3+#5) node{$#7$};
} 

\newcommand{\sphere}[2]
{
\def\u{-0.05}
\def\v{0.2}
\begin{scope}[shift={#1}]
\draw[domain=-1:181, samples=100, smooth] plot ({#2*cos(atan(\u)+\x)}, {#2*sin(atan(\u)+\x)}, {0});
\draw[domain=0:359, samples=200, fill=white, fill opacity=0.65, smooth cycle] plot ({cos(\x)*#2/sqrt((1+\u^2)*cos(\x)^2+2*\u*\v*cos(\x)*sin(\x)+(1+\v^2)*sin(\x)^2)}, {(\u*cos(\x)+\v*sin(\x))*#2/sqrt((1+\u^2)*cos(\x)^2+2*\u*\v*cos(\x)*sin(\x)+(1+\v^2)*sin(\x)^2)}, {sin(\x)*#2/sqrt((1+\u^2)*cos(\x)^2+2*\u*\v*cos(\x)*sin(\x)+(1+\v^2)*sin(\x)^2)});
\draw[domain=180:360, samples=100, smooth] plot ({#2*cos(atan(\u)+\x)}, {#2*sin(atan(\u)+\x)}, {0});

\end{scope}
} 

\let\originalleft\left
\let\originalright\right
\renewcommand{\left}{\mathopen{}\mathclose\bgroup\originalleft}
\renewcommand{\right}{\aftergroup\egroup\originalright}

\DeclarePairedDelimiter{\norm}{\lVert}{\rVert}

\addtolength{\hoffset}{-2cm}
\addtolength{\textwidth}{4cm}
\addtolength{\voffset}{-1.2cm}
\addtolength{\textheight}{2cm}

\theoremstyle{plain}
\newtheorem{thm}{Theorem}[subsection]
\newtheorem{lem}[thm]{Lemma}
\newtheorem{prop}[thm]{Proposition}
\newtheorem{cor}[thm]{Corollary}

\newtheorem{mthm}{Theorem}

\theoremstyle{remark}

\newenvironment{rmk}
  {\pushQED{\qed}\rmkx}
  {\popQED\endrmkx}

\newenvironment{qn}
  {\pushQED{\qed}\qnx}
  {\popQED\endqnx}



\theoremstyle{plain}

\theoremstyle{definition}

\newenvironment{defn}
  {\pushQED{\qed}\defnx}
  {\popQED\enddefnx}

\title{Discrete and continuous symmetries in monotone Floer theory}
\author{Jack Smith}
\address{Department of Mathematics\\ University College London\\ Gower Street\\ London\\ WC1E  6BT}
\email{jack.smith@ucl.ac.uk}

\begin{document}

\begin{abstract}
This paper studies the self-Floer theory of a monotone Lagrangian submanifold $L$ of a symplectic manifold $X$ in the presence of various kinds of symmetry.  First we suppose $L$ is $K$-homogeneous and compute the image of low codimension $K$-invariant subvarieties of $X$ under the length-zero closed--open string map.  Next we consider the group $\symp(X, L)$ of symplectomorphisms of $X$ preserving $L$ setwise, and extend its action on the Oh spectral sequence to coefficients of arbitrary characteristic, incorporating its action on the classes of holomorphic discs.  This imposes constraints on the differentials which force them to vanish in certain situations.  These techniques are combined to study a family of homogeneous Lagrangians in products of projective spaces, which exhibit some unusual properties.
\end{abstract}

\maketitle


\section{Introduction}
\label{secIntro}


\subsection{Summary}
\label{sscSummary}

Let $(X, \omega)$ be a symplectic manifold and $L \subset X$ a Lagrangian submanifold.  We assume that $L$ is \emph{monotone}, meaning that the Maslov index $\mu$ (relative first Chern class) and area homomorphisms $H_2(X, L; \Z) \rightarrow \R$ are positively proportional on the image $H_2^D$ of $\pi_2(X,L)$.  This ensures that moduli spaces of holomorphic discs have good compactness properties, so that quantum cohomology $QH^*(X)$ and Floer cohomology $HF^*(L, L)$ can be defined using classical methods.

The Floer cohomology $HF^*(L, L)$ is extremely difficult to compute in most cases, and there is no general procedure even to decide whether or not it vanishes.  Broadly speaking, existing non-vanishing proofs rely on the dimension being sufficiently low that the Floer complex can be computed explicitly (as in \cite{EL1}, for example), or exploit the Oh spectral sequence \cite{OhSS}
\begin{equation}
\label{eqOhSS}
\ssE{1} \cong H^*(L) \implies HF^*(L, L).
\end{equation}
The differentials encode `quantum corrections' to the classical cohomology, which count certain configurations of holomorphic discs, and typically one shows that either there are no discs (for example because $L$ is exact---this is essentially Floer's original result \cite{Flo}) or that the differentials must vanish for degree reasons (for example because $L$ is of high minimal Maslov number \cite[Theorem II]{OhSS}, \cite[Proposition 6.1.1]{BCQS}).

Special techniques apply when $L$ is a torus or is the fixed locus of an antisymplectic involution.  For tori the spectral sequence degenerates at the second page, since the classical cohomology ring is generated in degree $1$, and is determined entirely by the sum of the homology classes of the boundaries of index $2$ discs.  This can be calculated when $L$ is a toric fibre \cite{ChoCl, ChoOh} and in some other examples \cite{ChSchTwist}.  For antisymplectic fixed loci one can use the involution to reflect discs and prove that quantum corrections cancel in characteristic $2$ \cite{Ha} (see also \cite{FOOOinv}).

The purpose of the present paper is to develop two new techniques for constraining or computing $HF^*(L, L)$ using symmetries of $L$ inside $X$, which we illustrate by applying to the following family of examples.  Fix an integer $N \geq 3$ and consider the action of $\PSU(N-1)$ on $(\C\P^{N-2}, \omega_\mathrm{FS})$ obtained by projectivising the standard representation of $\SU(N-1)$ on $\C^{N-1}$; here $\omega_\mathrm{FS}$ denotes the Fubini--Study form.  Now let $X$ be the product $(\C\P^{N-2})^N$, with the diagonal $\PSU(N-1)$-action.  This action is Hamiltonian, and the zero set of the moment map is a monotone Lagrangian $L$ which is a free $\PSU(N-1)$-orbit.  The symmetric group $S_N$ acts on $X$ by permuting the factors and $L$ is invariant setwise.  These Lagrangians are high-dimensional but of low minimal Maslov number compared with the degrees of the generators of the cohomology ring, and there is no obvious disc cancellation, so none of the above techniques apply.

Fix a coefficient ring $R$.  If $\Char R \neq 2$ then we assume $L$ is orientable and equipped with a choice of \emph{relative spin structure}.  This is used to orient the moduli spaces of holomorphic curves being counted, and is discussed in detail in \cref{sscOrientations}.  One may also equip $L$ with a flat $R$-line bundle, or more generally a local system as in \cref{sscLocSys}.  Using our two techniques in conjunction, considering both the $\PSU(N-1)$- and $S_N$-symmetries, we compute the following.

\begin{mthm}[\cref{corPrimePow,propWide}]
\label{mthmHF}
For any choice of relative spin structure and local system on $L$, $HF^*(L, L; R)$ vanishes unless $N= p^r$ or $2p^r$ for some prime $p$ and positive integer $r$.

Conversely, if $R$ is a field of prime characteristic $p$ and $N=p^r$ or $2p^r$ then there exists a relative spin structure on $L$ such that with the trivial local system we have an isomorphism of $R$-vector spaces $HF^*(L, L; R) \cong H^*(L; R)$.
\end{mthm}

In particular, from those $N$ which are not prime powers or twice prime powers we obtain an infinite family of monotone Lagrangians whose Floer cohomology vanishes in a robust sense: it isn't just that we chose the wrong coefficient ring or local system.  To the author's knowledge, the only previously known examples with this property, when the ambient manifold $X$ is closed and simply connected, are those constructed by Oakley--Usher in \cite{OakUsh}.

A consequence of non-vanishing of $HF^*(L,L)$ is that $L$ cannot be displaced from itself by Hamiltonian diffeomorphisms---indeed, this was Floer's motivation for defining $HF^*$ in the first place.  Oakley and Usher show that many of their examples are displaceable, and for the remainder the displaceability seems to be currently unknown.  In contrast, we show

\begin{mthm}[\cref{thmNonDisp}]
\label{mthmNonDisp}
For all values of $N$ the Lagrangian $L$ is non-displaceable.
\end{mthm}

The proof uses Floer cohomology over $\C$ deformed by a \emph{$B$-field}.  This is a complex-valued closed $2$-form $B$ on $X$ which is used to weight the contribution of each holomorphic curve $u$ to the Floer differential by a factor of
\[
e^{-2\pi i \int u^* B}.
\]
Our methods give enough information about the curves to enable us to tune $B$ to make most contributions cancel.


\subsection{The closed--open map}
\label{sscCOintro}

Throughout the paper our standing assumptions are:
\begin{itemize}
\item $(X, \omega)$ is a compact symplectic manifold
\item $L \subset X$ is a closed, connected, monotone Lagrangian of dimension $n$ (so $\dim_\R X = 2n$)
\item The minimal Maslov number $N_L$ of $L$, defined to be the minimal positive generator of $\mu(H_2^D)$ in $\Z_{>0} \cup \{\infty\}$, is at least $2$
\item $R$ is a coefficient ring, and if $\Char R \neq 2$ then $L$ is orientable (which automatically implies $N_L$ is even) and equipped with a relative spin structure (see \cref{sscOrientations})
\item $L$ may also be equipped with a local system (see \cref{sscLocSys}).
\end{itemize}

The first technique we develop concerns the (length-zero) \emph{closed--open string map}, which is a unital $R$-algebra homomorphism
\[
\CO^0 : QH^*(X; R) \rightarrow HF^*(L, L; R).
\]
Given a cohomology class $\alpha$ on $X$, Poincar\'e dual to a cycle $Z$, $\CO^0(\alpha)$ is defined roughly as follows.  Consider the moduli space of holomorphic discs in $X$ with boundary on $L$ which carry one interior marked point and one boundary marked point.  We cut down the moduli space by asking the interior marked point to lie on $Z$, and then evaluate at the boundary marked point to sweep a (Floer) cycle on $L$.  The Poincar\'e dual of this cycle is $\CO^0(\alpha)$.  See \cref{sscCOreview} for a rigorous discussion.

The significance of this map for our purposes is that often $QH^*(X; R)$ is known, so computing parts of $\CO^0$ can give information about $HF^*(L, L; R)$.  For example, it is a famous observation, usually attributed to Auroux \cite[Lemma 3.1]{Au}, Kontsevich and Seidel, that $\CO^0$ sends $c_1(X)$ to $m_0(L) \cdot 1_L$, where $1_L$ is the unit in $HF^*$ and $m_0(L)$ is the count of holomorphic index $2$ discs which map a boundary marked point to a generically chosen point of $L$.  This means that
\[
\CO^0(c_1(X)-m_0(L) \cdot 1_X) = 0,
\]
where $1_X$ is the unit in $QH^*$, so if $HF^* \neq 0$ then $c_1(X)-m_0(X) \cdot 1_X$ cannot be invertible.  In other words, if $HF^*$ is non-zero then $m_0(L)$ is an eigenvalue for quantum multiplication by $c_1(X)$.

In general it is difficult to compute $\CO^0$ but we show that when $L$ has a large group of continuous symmetries then some calculation \emph{is} possible.  More precisely

\begin{defn} \label{labHomLag} A \emph{$K$-homogeneous Lagrangian} is a Lagrangian orbit $L$ of the action of a compact connected Lie group $K$ on a compact K\"ahler manifold $X$ by holomorphic automorphisms.
\end{defn}

The general study of such Lagrangians was initiated by Evans--Lekili \cite{EL1}.  For the remainder of this subsection we fix a $K$-homogeneous $(X, L)$, satisfying the above standing assumptions.  Evans and Lekili introduced a particularly simple type of holomorphic disc in this setting:

\begin{defn} \label{labAxDisc} A holomorphic disc $u : (D, \pd D) \rightarrow (X, L)$ is \emph{axial} if, possibly after reparametrisation, there exists a Lie algebra element $\xi \in \mathfrak{k}$ such that $u$ is of the form
\begin{equation}
\label{eqAxDef}
z \mapsto e^{-i \xi \log z} u(1)
\end{equation}
for $z \in D \setminus \{0\}$, where $e^\cdot$ denotes the exponential map $\mathfrak{k}_\C \rightarrow K_\C$.  Here $\mathfrak{k}$ is the Lie algebra of $K$, and $\mathfrak{k}_\C$, $K_\C$ are their respective complexifications (see \cref{sscHomog}).  We do not require $e^{2\pi \xi}$ to be the identity in $K$, merely that it fixes $u(1)$.
\end{defn}

\begin{rmk}
The definitions given in \cite{EL1} are slightly different.  Evans and Lekili require the $K$-action on $X$ to be Hamiltonian, and the complexified action to be algebraic.  Their definition of axiality is in terms of the existence of a Lie group morphism $R : \R \rightarrow K$ such that $u(e^{i\theta}z)=R(\theta)u(z)$ for all $z \in D$ and $\theta \in \R$, but by \cite[Section 2.3]{Sm} this implies our apparently stronger condition.
\end{rmk}

Let $N_X^+$ in $\Z_{>0} \cup \{\infty\}$ denote the minimal Chern number of rational curves in $X$ which are holomorphic for the standard (integrable, $K$-invariant, K\"ahler) complex structure.  Our main result is the following, assuming for simplicity that $L$ has the trivial local system.

\begin{mthm}[\cref{thmCO}]
\label{mthmCO}
Let $Z \subset X$ be a setwise $K$-invariant analytic subvariety of complex codimension $\leq N_X^+$, disjoint from $L$ and Poincar\'e dual a class $\alpha \in QH^*(X; R)$.  Then
\[
\CO^0(\alpha) = \lambda \cdot 1_L,
\]
where $\lambda$ is a count of axial discs passing through $Z$.  If $L$ is equipped with the \emph{standard} spin structure (see \cref{StdSpin}) then all of these discs contribute with positive sign.
\end{mthm}

The power of this statement is that in examples one can often enumerate the axial discs explicitly, by considering the possible values of $\xi$ in \eqref{eqAxDef} and reducing to a computation in linear algebra (see \cref{lemUniqDisc} for instance).  This is in stark contrast to the problem of enumerating $J$-holomorphic discs for an arbitrary almost complex structure $J$.

Using this knowledge of $\CO^0$ one can prove constraints on $HF^*$.  For instance one could argue as in the Auroux--Kontsevich--Seidel result that if $HF^* \neq 0$ then $\lambda$ must be an eigenvalue for quantum multiplication by $\alpha$.  More directly, if one knows some polynomial relation $f(\alpha_1, \dots, \alpha_r)=0$ holds in $QH^*$ between the classes $\alpha_1, \dots, \alpha_r$ Poincar\'e dual to invariant subvarieties $Z_1, \dots, Z_r$ (of complex codimension $\leq N_X^+$), then the same polynomial relation $f(\lambda_1, \dots, \lambda_r)=0$ must hold between the corresponding axial disc counts in the subring of $HF^*$ generated by $1_L$.  This is essentially how we obtain the first part of \cref{mthmHF}.  In \cite{Sm} we used a special case of \cref{mthmCO}---where $Z$ is the twisted cubic in $\C\P^3$---to explain the fact that the Floer cohomology of the Chiang Lagrangian \cite{Ch} vanishes outside characteristic $5$, as observed by Evans--Lekili \cite{EL1}, and derived similar constraints for other $\SU(2)$-homogeneous `Platonic' Lagrangians.

\begin{rmk}
Toric fibres provide the simplest examples of homogeneous Lagrangians, and in this setting \cref{mthmCO} recovers the well-known fact that (with the standard spin structure and trivial local system say) $\CO^0$ gives $1_L$ on each component of the toric divisor.  Applying this fact with a general local system is the key step in proving that quantum cohomology is isomorphic to the Jacobian of the superpotential (see \cite{SmithToricQH} and references therein).
\end{rmk}

The purpose of the codimension condition is to rule out bad sphere bubbling at the interior marked point constrained to $Z$, when comparing the computation for the standard complex structure with that for a suitably generic almost complex structure.  Without this condition the result can fail, as demonstrated in \cref{rmkCodimCondition}.


\subsection{The spectral sequence}
\label{sscIntroSS}

Our second technique concerns the Oh spectral sequence \eqref{eqOhSS}.  The group $\symp(X,L)$, of symplectomorphisms of $X$ preserving $L$ setwise, acts on the $\ssE{1}$ page by restricting $\phi \in \symp(X,L)$ to $L$ and acting on cohomology by pullback $(\phi^*)^{-1}$.  Our goal is to show that this induces an action on the spectral sequence, meaning that the action on $\ssE{1}$ commutes with the differential $\diff_1$, so descends to $\ssE{2}$, where it commutes with $\diff_2$, and so on.

Doing this in the most useful way requires an enlargement of the coefficient ring.  Usually in Floer theory one introduces a formal \emph{Novikov variable} $T$ and weights counts of holomorphic curves $u$ by $T^{\omega(u)}$.  By working over the Novikov ring
\[
\Big\{\sum_{j=1}^\infty a_j T^{l_j} : a_j \in R \text{ and } l_j \in \R \text{ with } l_j \rightarrow \infty \text{ as } j \rightarrow \infty\Big\}
\]
this allows one to make sense of potentially infinite counts.  Monotonicity enables us to equivalently weight by $T^{\mu(u)/N_L}$ instead, and ensures that counts are finite, so we may in fact work over the Laurent polynomial ring $R[T^{\pm1}]$ or even take $T=1$.  It is the filtration of the Floer complex by powers of $T$ that gives rise to the spectral sequence \eqref{eqOhSS} after setting $T=1$.

Instead of just weighting curves by $T^{\mu(u)/N_L}$ we can enlarge the coefficients from the group ring $R[T^{\pm 1}]$ to $R[H_2^D]$ and weight by the full homology class $T^{[u]}$.  This is well-known \cite[Section 2.1]{BCEG} in the setting of the \emph{pearl complex} model for Floer theory which we use throughout (see \cref{sscPearlBg}).  The filtration by Maslov index allows us to construct the spectral sequence
\begin{equation}
\label{eqOhSS2}
\ssE{1} \cong H^*(L;R) \otimes_R R[H_2^D] \implies HF^*(L, L;R[H_2^D])
\end{equation}
over this ring just as before.

\begin{mthm}[\cref{labSSrep}]
\label{mthmSS}
There is an action of $\symp(X,L)$ on the $\ssE{1}$ page of \eqref{eqOhSS2}, which involves its actions on both $H_2^D$ and $H^*(L; R)$, that induces an action on the whole spectral sequence.
\end{mthm}

The actions on later pages are all determined by the action on $\ssE{1}$, which is computable entirely in terms of classical topology and factors through $\pi_0 \mathrm{Diff}(X,L)$.  The content is that on each page the action commutes with the differential.  This was shown by Biran--Cornea \cite[Section 5.8]{BCQS} in characteristic $2$ and without the enlarged coefficients.  Outside characteristic $2$ some enlargement of the coefficient ring is in general necessary, to account for the fact that the $\symp(X,L)$-action may not preserve the relative spin structure used to orient moduli spaces, but our observation is that even when not necessary it is often useful.  Ultimately we care about the complex over $R[T^{\pm 1}]$ or $R$, and the enlarged coefficients should be viewed simply as an intermediate step.

In our main family of examples, we use the representation theory of $S_N \subset \symp(X,L)$ to constrain the differentials over $R[H_2^D]$ so that they vanish after reducing to $R[T^{\pm 1}]$.  This is analogous to the disc cancellation technique for antisymplecti fixed loci.  Now the cancellation process is more elaborate, but occurs at the level of homology classes rather than actual discs.


\subsection{Acknowledgements}

This work was mostly carried out whilst I was an EPSRC-funded PhD student at the University of Cambridge, and I am greatly indebted to my advisor, Ivan Smith, for many helpful discussions and suggestions, and for his patience and encouragement throughout the writing process.  Special thanks are also due to Oscar Randal-Williams, Nick Sheridan, and Frol Zapolsky, and an anonymous referee for valuable feedback.  Mohammed Abouzaid, Paul Biran, Fran\c{c}ois Charette, Jonny Evans, Kenji Fukaya, Sheel Ganatra, Ailsa Keating, Momchil Konstantinov, Yank\i~Lekili, Stuart Martin, Kaoru Ono, Dmitry Tonkonog, Brunella Torricelli and Chris Woodward all provided useful comments.  I am currently supported by EPSRC grant [EP/P02095X/1].


\section{Floer theory background}
\label{secFloerBg}

We start by reviewing the basic ideas of monotone Floer theory that we will need.


\subsection{The pearl complex}\label{sscPearlBg}

Recall our standing assumptions from \cref{sscCOintro}, regarding the symplectic manifold $X$, monotone Lagrangian $L$, and coefficient ring $R$.  We denote $R[T^{\pm 1}]$ by $\Lambda$, or $\Lambda_R$, with $T$ given grading $N_L$ (the minimal Maslov number).  Throughout the paper we will use the \emph{pearl complex} model for Floer (co)homology, developed by Biran--Cornea \cite{BCQS}, so we now give a brief summary.  To begin with we assume that $\Char R = 2$ and that $L$ carries the trivial local system.

Fix a Morse function $f$ and metric $g$ on $L$ such that the pair $(f, g)$ is Morse--Smale, and an almost complex structure $J$ on $X$ compatible with the symplectic form.  We abbreviate the triple $(f, g, J)$ to $\mathscr{D}$.  The pearl complex for $L$ over $\Lambda$, defined using auxiliary data $\mathscr{D}$, then has underlying $\Lambda$-module
\begin{equation}
\label{OldComplex}
C^*(L; \mathscr{D}; \Lambda) = \bigoplus_{x \in \crit(f)} \Lambda \cdot x,
\end{equation}
graded by Morse indices of critical points and the grading on $\Lambda$, and the differential counts rigid \emph{pearly trajectories} or \emph{strings of pearls}.  These are Morse flowlines between critical points, which may be interrupted by any number (including zero) of $J$-holomorphic discs in $X$ with boundary on $L$, as shown in \cref{figPearlTraj}.
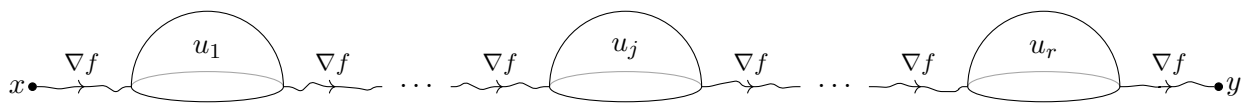
\begin{figure}[ht]
\centering
\begin{tikzpicture}
[blob/.style={circle, draw=black, fill=black, inner sep=0, minimum size=\blobsize}]
\definecolor{green}{rgb}{0, 0.9, 0.2}
\def\blobsize{1mm}

\critpt{-7.8}{0}{-0.2}{0}{x}{x}
\critpt{7.8}{0}{0.2}{0}{y}{y}

\flow{(x)}{(-6.5, 0)}{91}{$\nabla f$}{0}{0.3}
\disc{-6.5}{-4.5}{0}{$u_1$}
\flow{(-4.5, 0)}{(-3.2, 0)}{989}{$\nabla f$}{0}{0.3}
\draw (-2.75, 0) node{$\smash\dots$};
\flow{(-2.3, 0)}{(-1, 0)}{92}{$\nabla f$}{0}{0.3}
\disc{-1}{1}{0}{$u_j$}
\flow{(1, 0)}{(2.3, 0)}{90}{$\nabla f$}{0}{0.3}
\draw (2.75, 0) node{$\smash\dots$};
\flow{(3.2, 0)}{(4.5, 0)}{93}{$\nabla f$}{0}{0.3}
\disc{4.5}{6.5}{0}{$u_r$}
\flow{(6.5, 0)}{(y)}{94}{$\nabla f$}{0}{0.3}

\end{tikzpicture}
\caption{A pearly trajectory from $x$ to $y$.\label{figPearlTraj}}
\end{figure}
Each disc $u_j$ must be non-constant, and carries two marked points---one `incoming', which we assume is at $-1$ in $\pd D$, and one `outgoing' at $1$.

\begin{rmk}
Biran--Cornea work with downward Morse flows and homological notation, whereas we prefer to work with upward flows and \emph{co}homological notation.
\end{rmk}

More precisely, let $\mathcal{P}(x, y; A, \mathscr{D})$ denote the moduli space of pearly trajectories from $x$ to $y$, in which the total homology class of the discs is $A \in H_2(X, L; \Z)$, modulo reparametrisation of discs and flowlines.  This has virtual (expected) dimension
\[
|y| - |x| - 1 + \mu(A),
\]
where $|\cdot|$ denotes Morse index.  Biran--Cornea show \cite[3.1.2, Proposition 3.1.3]{BCQS} that after fixing $(f,g)$ there is a second category set of $J$ such that each $\mathcal{P}(x, y; A, \mathscr{D})$ of virtual dimension $\leq 1$ is transversely cut out, and those of virtual dimension $0$ (i.e.~rigid) are compact.  In the latter case we can therefore count the points in $\mathcal{P}(x, y; A, \mathscr{D})$ modulo $2$, and define the differential $\diff$ by
\[
\diff x = \smashoperator{\sum_{\substack{y, A \\ \mu(A)=|x|-|y|+1}}} \#_{\operatorname{mod} 2} \mathcal{P}(x, y; A, \mathscr{D}) \cdot T^{\mu(A)/N_L}y,
\]
extended $\Lambda$-linearly.  For each $y$ there are only finitely many $A$ which contribute to this sum, by monotonicity and Gromov compactness.

For $J$ in a smaller second category set they show \cite[Proposition 5.1.2]{BCQS} that $\diff^2=0$, and that the resulting (co)homology is independent of the data $\mathscr{D}$, so gives an invariant of $L \subset X$ which they name \emph{Lagrangian quantum (co)homology}.  We call such data $\diff$-regular.  The proof that $\diff^2=0$ involves compactifying the moduli spaces $\mathcal{P}(x, y; A, \mathscr{D})$ of virtual dimension $1$ by adding boundary components in which exactly one of the following degenerations has occurred: a flowline has broken (in the standard Morse sense); a flowline has shrunk to length zero; or a disc has bubbled into two, each carrying one of the marked points.  The first type of degeneration gives terms contributing to the coefficient of $y$ in $\diff^2x$, whilst the second and third cancel.  Since they form the boundary of a $1$-manifold, their count modulo $2$ vanishes. There is a small gap involving bubbling of index $2$ discs in $1$-dimensional families of trajectories with the same input and output, but this is addressed by Zapolsky in \cite[Section 6.2]{ZapPSS}.  The proof of invariance uses a generic Morse cobordism over $L \times [0,1]$ and a regular path $J_t$ of almost complex structures to connect two given choices of auxiliary data, and counts pearly trajectories which cross this cobordism to define a chain map between the two complexes.  The fact that this map is canonical (independent of the choice of Morse cobordism and path $J_t$) up to homotopy, and is a quasi-isomorphism, is proved using a Morse cobordism over $L \times [0, 1]^2$ and a corresponding `paths of paths' $J_{t, \tau}$.  See \cite[Section 5.1.2]{BCQS} for details.

One can also define Y-shaped pearly trajectories, with two input legs and one output, meeting at a $J$-holomorphic disc with three boundary marked points in prescribed cyclic order.  The latter disc is allowed to be constant, and if this is the case and there are no other discs then one recovers the familiar definition of the Morse trees which compute the classical cup product.  In order to make these Y-shaped pearly trajectories transverse one has to choose different Morse functions, $f_1$, $f_2$ and $f_3$, on each leg, such that each $(f_i, g)$ is Morse--Smale and such that the following additional Morse--Smale-type condition is satsified: each intersection of an ascending manifold of $f_1$ with an ascending manifold of $f_2$ and a descending manifold of $f_3$ is transverse.  Once this has been done, there is second category set of $J$ such that for each $i$ the data $(f_i, g, J)$ are $\diff$-regular, and such that the Y-shaped pearly moduli spaces are transversely cut out and give rise to a well-defined associative product $\qcup$ on Lagrangian quantum cohomology \cite[Section 5.2]{BCQS}.  We denote $((f_1, f_2, f_3), g, J)$ by $\mathcal{D}_{\qcup}$, and say they are $\qcup$-regular if they satisfy these transversality conditions.  If $\mathscr{D}_i$ denotes $(f_i, g, J)$, and $C_i$ the corresponding pearl complex, then the count of Y-shaped trajectories really defines a product $C_1 \otimes C_2 \rightarrow C_3$, and in order to obtain a product on a single complex $C$ (constructed using $\diff$-regular $\mathscr{D}$, say) we have to use comparison map quasi-isomorphisms
\[
C \otimes C \xrightarrow{\text{comparison qis}} C_1 \otimes C_2 \xrightarrow{\text{product}} C_3 \xrightarrow{\text{comparison qis}} C.
\]

Biran--Cornea \cite[Section 5.6]{BCQS} show that the Lagrangian quantum cohomology is canonically isomorphic to the self-Floer cohomology $HF^*(L, L; \Lambda)$ via a so-called PSS map.  Zapolsky \cite[Section 5.2.4]{ZapPSS} proves that under this isomorphism the above product agrees with the standard Floer product, counting pseudoholomorphic triangles.  We will therefore only speak of Floer cohomology, not Lagrangian quantum cohomology, but will compute it using the pearl complex.

\begin{rmk}
The differential and product on the pearl complex, as well as the comparison maps between different choices of auxiliary data, all agree with their Morse counterparts to order zero in $T$.  The non-Morse contributions (quantum corrections), carry positive powers of $T$ since non-constant $J$-holomorphic discs have positive area, or equivalently positive index.  For this reason it is often helpful to keep the $T$ variable, however we may also set it equal to $1$ to define the invariant $HF^*(L^\flat, L^\flat; R)$ appearing (without the ${}^\flat$s) in \cref{secIntro}.
\end{rmk}


\subsection{Orientations}
\label{sscOrientations}

The assumption $\Char R = 2$ can be relaxed by orienting the moduli spaces used above.  Biran--Cornea \cite[Appendix A]{BCEG} show how to do this assuming that $L$ is orientable, using a choice of relative spin structure on $L$ to coherently orient moduli spaces of $J$-holomorphic discs.  Strictly they only consider absolute spin structures, but these only enter in the construction of orientations on disc moduli spaces and for this relative spin structures suffice by \cite[Chapter 8]{FOOObig}.  Since choices of relative spin structure play a significant role in our results, in this subsection we give a brief review of the theory.

\begin{rmk}
Zapolsky \cite{ZapPSS} gives an alternative approach to orientations under somewhat weaker hypotheses.  It would be desirable to understand its compatibility with \cite{BCEG}, and more generally between different orientation schemes in the literature.
\end{rmk}

It is known from work of Vin de Silva \cite{VdS} that moduli spaces of $J$-holomorphic discs are not in general orientable.  More specifically, if $L$ is orientable (this is necessary even for the moduli space of constant discs to be orientable) and $\mathcal{M}$ denotes the moduli space of $J$-holomorphic discs in a fixed homology class, then for any loop $u_t : (D, \pd D) \rightarrow (X, L)$ in $\mathcal{M}$ we have
\[
\ip{w_1(\mathcal{M})}{u_t} = \ip{w_2(L)}{u_t|_{\pd D}}.
\]
Here the right-hand side denotes the evaluation of the second Stiefel--Whitney class of (the tangent bundle of) $L$ on the $2$-torus in $L$ swept by the boundaries of the discs in our loop, whilst the left-hand side denotes the evaluation of the first Stiefel--Whitney class of $\mathcal{M}$ on the loop itself.  If $\mathcal{M}$ is transversely cut out then it has a genuine tangent bundle, given by the kernel of the linearisation $D\overline{\pd}_J$ of the $\overline{\pd}_J$-operator, but in general we should interpret $w_1(\mathcal{M})$ as the first Steifel--Whitney class of the determinant line bundle $\det (D\overline{\pd}_J$).

The obvious condition on $L$ to guarantee orientability of disc moduli spaces is thus orientability (i.e.~vanishing of $w_1(L)$) plus vanishing of $w_2(L)$.  This is equivalent to the existence of a spin structure on $L$, meaning a lift of the positively-oriented frame bundle from a principal $\mathrm{GL}_+$-bundle to a principal $\mathrm{GL}_+^\sim$-bundle, where $\mathrm{GL}$ denotes $\mathrm{GL}(n, \R)$, $\mathrm{GL}_+$ its orientation-preserving component, and $\mathrm{GL}_+^\sim$ the unique connected double cover of this component.  This notion is independent of the choice of orientation on $L$, since the $\mathrm{GL}_+$ frame bundles associated to the two different orientations are isomorphic by acting by an element of $\mathrm{GL} \setminus \mathrm{GL}_+$, and this isomorphism is canonical up to homotopy since the latter space is connected.  The set of spin structures forms a torsor for $H^1(L; \Z/2)$.

In fact we only need $w_2(L)$ to vanish on the boundaries of certain solid tori in $X$, namely those swept out by loops of holomorphic discs.  In particular, if $w_2(L)$ lies in the image of the restriction map $H^2(X; \Z/2) \rightarrow H^2(L; \Z/2)$ then the disc moduli spaces are still orientable.  This motivates

\begin{defn}[{\cite{FOOObig}, as reformulated in \cite[Section 3]{QuiltOr}}]
\label{defRelSpin}
If $L$ is orientable and there exists a class $b \in H^2(X; \Z/2)$ such that $b|_L = w_2(L)$ then $L$ is \emph{relatively spin}.  In this case, a \emph{relative spin structure} on $L$ comprises a choice of $b$ (the \emph{background class}), an open cover $(U_i)$ of $L$, and a lift $\hat{\psi}_{ij}$ of each $\mathrm{GL}_+$ transition map $\psi_{ij} : U_i \cap U_j \rightarrow \mathrm{GL}_+$ to $\mathrm{GL}_+^\sim$ such that on each $U_i \cap U_j \cap U_k$ we have
\[
\hat{\psi}_{jk} \hat{\psi}_{ik}^{-1} \hat{\psi}_{ij} = \delta_{ijk},
\]
where $\delta_{ijk}$ is a \v{C}ech cocycle representing $b|_L$.  We quotient out by an appropriate \v{C}ech-like notion of equivalence.  The set of relative spin structures forms a torsor for $H^2(X, L; \Z/2)$, with a class $\eps \in H^2(X, L; \Z/2)$ acting on background classes by $b \mapsto b + \eps \in H^2(X; \Z/2)$.
\end{defn}

Fukaya--Oh--Ohta--Ono \cite[Chapter 8]{FOOObig} show how an orientation and relative spin structure on $L$ induce coherent orientations on the disc moduli spaces.  The important properties for us are

\begin{lem}[{de Silva \cite[Theorem Q]{VdS}, Cho \cite[Theorem 6.4]{ChoCl}, Fukaya--Oh--Ohta--Ono \cite[Proposition 8.1.16]{FOOObig}}]
\label{SpinChangeProp}
Changing relative spin structure by a class $\eps \in H^2(X, L; \Z/2)$ modifies the orientation on the moduli space of holomorphic discs in class $A \in H_2(X, L; \Z)$ by $(-1)^{\ip{\eps}{A}}$.\hfill$\qed$
\end{lem}

\begin{lem}
\label{OrientationReverse}
Reversing the orientation of $L$, whilst keeping the same relative spin structure, does not affect orientations of pearly trajectory moduli spaces.
\end{lem}
\begin{proof}
First we claim that it reverses the orientations of the disc moduli spaces.  This is because in \cite[Chapter 8]{FOOObig} the orientation and relative spin structure on $L$ enter the construction by inducing a stable trivialisation of $TL$ over the boundary of each disc.  Reversing the orientation of $L$ simply reverses the orientation of this trivialisation, which in turn reverses the moduli space orientation.  This is explained in more detail in \cref{sscOrientingDiscs}.

To show that the pearly orientations are unaffected we now need to examine the conventions from \cite[Appendix A]{BCEG} to see that all of the sign changes cancel out.  Reversing the orientation of $L$ introduces the following minus signs into the orientation computations: one for each disc moduli space (since their orientations are reversed); one for each ascending manifold (\emph{descending} manifolds are the natural ones to orient, and ascending manifolds acquire induced orientations which depend on the orientation of $L$); and one for each fibre product taken over $L$ (this is the number of flowlines in the trajectory).  Representing a pearly trajectory by a tree $\mathcal{T}$, with a flowline for each edge and a disc at each internal vertex, the important quantity is thus the parity of
\[
\# \text{internal vertices} + \# \text{inputs} + \# \text{edges}.
\]
This is simply $\#\text{vertices}+\#\text{edges}-1$, and recognising the first two terms modulo $2$ as the Euler characteristic of $\mathcal{T}$ (which is contractible) we see that the relevant parity is even.
\end{proof}


\subsection{Local systems}
\label{sscLocSys}

We now also relax the condition that $L$ carries the trivial local system.  Recall that a local system on $L$ (over the coefficient ring $R$) is a locally constant sheaf $\mathscr{F}$ of $R$-modules.  Since $L$ is connected, a local system can be described relative to an arbitrary base point $p$ by specifying an $R$-module $F$ (the fibre) and a monodromy homomorphism $\overline{m} : \pi_1(L, p)^\mathrm{op} \rightarrow \operatorname{Aut}_R F$.  The ${}^\mathrm{op}$ appears because we write concatenation of paths from left to right, but composition of functions from right to left.  In a slight abuse of notation, we'll say that \emph{$\mathscr{F}$ has rank $1$} if $F=R$.  Otherwise we'll say $F$ has \emph{higher rank}.

\begin{defn}
A \emph{monotone Lagrangian brane} $L^\flat$ is a triple $(L, s, \mathscr{F})$ comprising a monotone Lagragian (satisfying the usual hypotheses), a relative spin structure $s$ (not needed if $\Char R = 2$), and a local system $\mathscr{F}$.
\end{defn}

We now explain how $\mathscr{F}$ modifies the pearl complex.  First, the complex itself becomes
\begin{equation}
\label{eqPearlCx}
C^*(L^\flat; \mathscr{D}; \Lambda) = \bigoplus_{x \in \crit(f)} (\Lambda \cdot x) \otimes_R \End_R \mathscr{F}_x,
\end{equation}
where $\mathscr{F}_x$ denotes the fibre of $\mathscr{F}$ at $x$.  We then change the differential as follows.  For each rigid pearly tranjectory $\gamma$ from $x$ to $y$ we can define two homomorphisms, $\mathrm{P}(\gamma_\mathrm{t})$ and $\mathrm{P}(\gamma_\mathrm{b})$, from $\mathscr{F}_x$ to $\mathscr{F}_y$ by parallel transporting along $\gamma$, traversing each disc around the top and bottom half of the boundary respectively, as shown in \cref{figParTrans}.
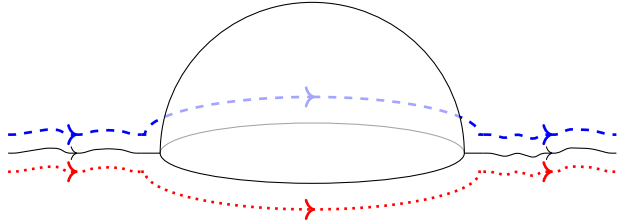
\begin{figure}[ht]
\centering
\begin{tikzpicture}
[blob/.style={circle, draw=black, fill=black, inner sep=0, minimum size=\blobsize}]
\definecolor{green}{rgb}{0, 0.9, 0.2}
\def\blobsize{1mm}

\begin{scope}[blue, dashed, yshift=7, line width=1]
\flow{(-4, 0)}{(-2.25, 0)}{41}{}{}{}
\draw[->, shorten >=-1] (-2.25, 0) -- (-2.2, 0)  arc (180:270: 2.2 and -0.5);
\draw (0, 0.5) arc (270:360: 2.2 and -0.5) -- (2.25, 0);
\flow{(2.25, 0)}{(4, 0)}{42}{}{}{}
\end{scope}

\flow{(-4, 0)}{(-2.25, 0)}{41}{}{}{}
\draw (-2.25, 0) -- (-2, 0);
\disc {-2}{2}{0}{}
\draw (2, 0) -- (2.25, 0);
\flow{(2.25, 0)}{(4, 0)}{42}{}{}{}

\begin{scope}[red, dotted, yshift=-7, line width=1]
\flow{(-4, 0)}{(-2.25, 0)}{41}{}{}{}
\draw[->, shorten >=-1] (-2.25, 0) -- (-2.2, 0)  arc (180:270: 2.2 and 0.5);
\draw (0, -0.5) arc (270:360: 2.2 and 0.5) -- (2.25, 0);
\flow{(2.25, 0)}{(4, 0)}{42}{}{}{}
\end{scope}

\end{tikzpicture}
\caption{The parallel transport paths $\gamma_\mathrm{t}$ (dashed) and $\gamma_\mathrm{b}$ (dotted) defining the maps $\mathrm{P}(\gamma_\mathrm{t})$ and $\mathrm{P}(\gamma_\mathrm{b})$.\label{figParTrans}}
\end{figure}
The contribution of $\gamma$ to the differential of $x \otimes \theta$, for $\theta \in \End \mathscr{F}_x$, is then defined to be $T^{\mu(A)/N_L}y \otimes \mathrm{P}(\gamma_\mathrm{t}) \circ \theta \circ \mathrm{P}(\gamma_\mathrm{b})^{-1}$, where $A$ is the total disc class as before.

The construction all goes through again except that if $\mathscr{F}$ has higher rank then the Floer complex may be obstructed, meaning that $\diff^2$ is not equal to zero.  Konstantinov considers this in \cite{MomchilLocSys}, where he defines certain subcomplexes which are always unobstructed.  Whenever we mention higher rank local systems we assume that either $\diff^2$ really is zero, or that we have passed to one of his subcomplexes.  Since these subcomplexes are closed under the Floer product and contain the image of $\CO^0$, our arguments still apply to them.

If $\mathscr{F}$ has rank $1$ then each $\End \mathscr{F}_x$ is canonically identified with $R$, so the complex reduces to \eqref{OldComplex}.  In this case the group of fibre automorphisms, $R^\times$, is abelian so the monodromy representation $\overline{m} : \pi_1(L, p)^\mathrm{op} \rightarrow \operatorname{Aut}_R \mathscr{F}_p$ factors through $H_1(L; \Z)$.  Then in the map $\theta \mapsto \mathrm{P}(\gamma_\mathrm{t}) \circ \theta \circ \mathrm{P}(\gamma_\mathrm{b})^{-1}$ the contributions of the parallel transports along the Morse flowlines all cancel away, and we are left with $\theta \mapsto \overline{m}(-\pd A) \theta$, where $\pd$ denotes the boundary map $H_2(X, L) \rightarrow H_1(L; \Z)$.  Letting $m$ denote the composition of $\overline{m}$ with the inverse map on $\pi_1(L, p)$, this simplifies to $\theta \mapsto m(\pd A) \theta$.

\begin{rmk}\label{SpinLocSys}
The choices of relative spin structure and local system are coupled together in the following sense.  The group $H^1(L; \Z/2)$ acts on local systems by twisting the signs of the monodromy maps, and acting by a class $\eps \in H^1(L; \Z/2)$ is equivalent (by \cref{SpinChangeProp}) to acting on the relative spin structure by the image of $\eps$ under the connecting map $H^1(L; \Z/2) \rightarrow H^2(X, L; \Z/2)$
\end{rmk}


\subsection{The closed--open map}
\label{sscCOreview}

We now describe the pearl model for the closed--open map
\begin{equation}
\label{eqCOdefn}
\CO^0 : QH^*(X; \Lambda) \rightarrow HF^*(L^\flat, L^\flat; \Lambda)
\end{equation}
mentioned (without ${}^\flat$s and with $T$ set to $1$) in \cref{sscCOintro}.  This is a quantum version of the classical pullback map $H^*(X; \Lambda) \rightarrow H^*(L; \Lambda)$ under the inclusion of $L$ into $X$.  Biran--Cornea \cite[Section 5.3]{BCQS} actually define a more general operation, the \emph{quantum module action}
\begin{equation}
\label{eqQuantumModule}
\qcup : QH^* \otimes HF^* \rightarrow HF^*,
\end{equation}
which combines $\CO^0$ with the product on $HF^*$, and we begin by discussing this.

First fix a Morse--Smale pair $(f, g)$ on $L$ and another $(h, g_X)$ on $X$ with the property that each ascending manifold of $h$ is transverse to every ascending and descending manifold of $f$.  Given $a$ in $\crit(h)$, and $x$ and $y$ in $\crit(f)$, the coefficient of $y$ in $a \qcup x$ counts configurations illustrated in \cref{figQMTraj}.
\begin{figure}[ht]
\centering
\begin{tikzpicture}
[blob/.style={circle, draw=black, fill=black, inner sep=0, minimum size=\blobsize}]
\definecolor{green}{rgb}{0, 0.9, 0.2}
\def\blobsize{1mm}

\critpt{0}{2.5}{-0.2}{0}{a}{a}
\flow{(a)}{(0, 1)}{92}{$\nabla h$}{-0.4}{0}
\draw (0, 1) node[blob]{};

\critpt{-7.8}{0}{-0.2}{0}{x}{x}
\critpt{7.8}{0}{0.2}{0}{y}{y}

\flow{(x)}{(-6.5, 0)}{91}{$\nabla f$}{0}{0.3}
\disc{-6.5}{-4.5}{0}{$u_1$}
\flow{(-4.5, 0)}{(-3.2, 0)}{989}{$\nabla f$}{0}{0.3}
\draw (-2.75, 0) node{$\smash\dots$};
\flow{(-2.3, 0)}{(-1, 0)}{92}{$\nabla f$}{0}{0.3}
\disc{-1}{1}{0}{$u_k$}
\flow{(1, 0)}{(2.3, 0)}{90}{$\nabla f$}{0}{0.3}
\draw (2.75, 0) node{$\smash\dots$};
\flow{(3.2, 0)}{(4.5, 0)}{93}{$\nabla f$}{0}{0.3}
\disc{4.5}{6.5}{0}{$u_r$}
\flow{(6.5, 0)}{(y)}{94}{$\nabla f$}{0}{0.3}

\end{tikzpicture}
\caption{A pearly trajectory contributing to the coefficient of $y$ in $a\qcup x$.\label{figQMTraj}}
\end{figure}
Precisely, one considers ordinary pearly trajectories from $x$ to $y$, but where one of the discs, $u_k$, is distinguished.  This disc may be constant, and carries an interior marked point at $0$ which is required to lie on the ascending manifold $W^\mathrm{asc}(a)$ of $a$.  It also carries a Hamiltonian perturbation $H$, which deforms the holomorphic curve equation and gives extra flexibility for attaining transversality; we describe such perturbations in \cref{sscHamPerts}, and in particular explain how Gromov compactness adapts.  The virtual dimension of the moduli space of such configurations is
\[
|y| - |x| - |a| + \mu(A),
\]
where as usual $A$ is the total homology class of the discs, and we weight the count by $T^{\mu(A)/N_L}$.

\begin{rmk}
Although $u_k$ may in principle be constant, constant discs are unlikely to satisfy the $H$-perturbed $J$-holomorphic curve equation.
\end{rmk}

Biran--Cornea show \cite[Sections 5.3.9--5.3.11]{BCQS} that given $(f, g)$ and $(h, g_X)$ as above, there is a second category set of $J$, and for each such $J$ a second category set of $H$, which achieve transversality for the moduli spaces required to define $\qcup$ and show that it induces a product \eqref{eqQuantumModule} which is associative (with respect to the products on both $QH^*$ and $HF^*$), unital, and independent of choices.  The relevant signs are defined in \cite[Section A.2.3]{BCEG}, and twisting by local systems is done exactly as for ordinary pearly trajectories (the extra marked point and incoming flowline from $a$ have no effect).

The map $\CO^0$ is defined by taking the quantum module action of $QH^*$ on $1_L \in HF^*$.  This gives a unital $\Lambda$-algebra homomorphism \eqref{eqCOdefn}, and it lands in the centre of $HF^*$ by \cite[Theorem 2.1.1(ii)]{BCQS} and the subsequent paragraph.  Concretely, $\CO^0(a)$ counts trajectories as in \cref{figQMTraj} but where $x$ is the minimum $m$ of $f$ (or sum over the  local minima $m_i$ if there's more than one), and $|y|=|a|-\mu(A)$.  In this situation, assuming our data are $\CO^0$-regular, we always have $k=1$ for rigid trajectories, i.e.~the disc carrying the interior marked point is the first one in the trajectory.  This because otherwise we may delete the discs $u_1, \dots, u_{k-1}$, and replace $m$ by the unique critical point $x'$ whose ascending manifold contains $u_k(-1)$, in order to obtain a transversely cut out trajectory of virtual dimension
\[
-\big(\mu(u_1)+\dots+\mu(u_{k-1})\big)-|x'| < 0.
\]
Moreover, we may drop the condition that $u(-1)$ lies in the ascending manifold of $m$ (or the union of the ascending manifolds of the $m_i$) since if we have a quantum module action trajectory satisfying $|y|=|a|-\mu(A)$ but $|x| > 0$ then again it is transversely cut out but of negative virtual dimension.  Thus $\CO^0(a)$ counts trajectories as shown in \cref{figCOTraj}.
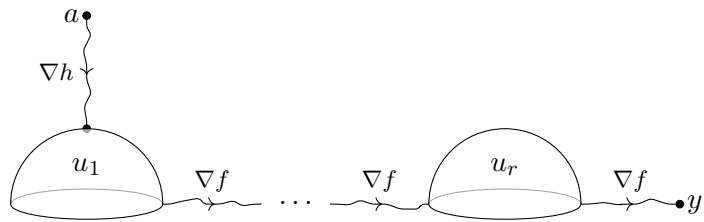
\begin{figure}[ht]
\centering
\begin{tikzpicture}
[blob/.style={circle, draw=black, fill=black, inner sep=0, minimum size=\blobsize}]
\definecolor{green}{rgb}{0, 0.9, 0.2}
\def\blobsize{1mm}

\critpt{0}{2.5}{-0.2}{0}{a}{a}
\flow{(a)}{(0, 1)}{92}{$\nabla h$}{-0.4}{0}
\draw (0, 1) node[blob]{};

\critpt{7.8}{0}{0.2}{0}{y}{y}

\disc{-1}{1}{0}{$u_1$}
\flow{(1, 0)}{(2.3, 0)}{90}{$\nabla f$}{0}{0.3}
\draw (2.75, 0) node{$\smash\dots$};
\flow{(3.2, 0)}{(4.5, 0)}{93}{$\nabla f$}{0}{0.3}
\disc{4.5}{6.5}{0}{$u_r$}
\flow{(6.5, 0)}{(y)}{94}{$\nabla f$}{0}{0.3}

\end{tikzpicture}
\caption{A pearly trajectory contributing to the coefficient of $y$ in $\CO^0(a)$.\label{figCOTraj}}
\end{figure}

We will need to compute $\CO^0$ on classes defined as the Poincar\'e dual $\PD[F]$ of a pseudocycle $F : M \rightarrow X$.  Recall that $F$ being a pseudocycle means that $M$ is a smooth manifold and $F$ is a smooth map whose image is pre-compact, such that $\overline{F(M)}\setminus F(M)$ is covered by a countable collection of smooth maps from manifolds of dimension $\leq \dim M - 2$.  If $M$ is oriented over the coefficient ring then this gives rise to a well-defined homology class in $X$ which we denote by $[F]$.  We restrict to the case where $M$ has even dimension, $2l$, which is all we need in applications.  Otherwise the sign computations depend on specific choices of conventions for duality \cite[Section A.2.6]{BCEG}.

We shall assume that $\overline{F(M)}$ is disjoint from $L$, so in particular $F$ and its boundary strata (the maps covering $\overline{F(M)} \setminus F(M)$) are vacuously transverse to ascending and descending manifolds in $L$.  The same arguments as in \cite[Sections 5.3.9--5.3.11]{BCQS} show that a second category set of $J$, and a second category set of $H$, achieve transversality for the moduli spaces of trajectories used for $\CO^0$ but with $u_1(0)$ now constrained to lie in $F(M)$.  More precisely, the latter means that instead of taking the fibre product over `evaluation of $u_1$ at $0$' and `inclusion of $W^\mathrm{asc}(a)$' in the definition of the moduli space, we take the fibre product over `evaluation of $u_1$ at $0$' and `$F$'.  Counting rigid trajectories of this type defines a pearl cocycle which we denote by $\CO^0_\mathrm{pseudo}(\PD[F])$.

\begin{lem}
\label{lemCOpseudo}
We have $\CO^0_\mathrm{pseudo}(\PD[F]) = \CO^0(\PD[F])$.
\end{lem}
\begin{proof}
Consider the one-dimensional moduli spaces of trajectories of the same shape but where $u_1(0)$ is required to lie at the end of a $\nabla h$ flowline which begins on $F(M)$ (again, this is properly described in terms of a fibre product).  For fixed $(f, g)$ and generic choices of $(h, g_X)$, $J$ and $H$ these are transversely cut out and are compactified by: shrinking of the $\nabla f$ flowlines and bubbling of the discs at their boundary marked points, which cancel each other; breaking of the $\nabla f$ flowlines, which contributes pearl coboundaries; shrinking of the $\nabla h$ flowline, which contributes $-\CO^0_\mathrm{pseudo}(\PD[F])$; and breaking of the $\nabla h$ flowline, which contributes $\CO^0$ of the Morse cocycle $\alpha$ defined by
\begin{equation}
\label{eqMorsePD}
\alpha = \smashoperator{\sum_{\substack{a \in \crit(h)\\ |a| = 2(n-l)}}} \#\big(M \times_X W^\mathrm{desc}(a)\big) \cdot a.
\end{equation}
Here the fibre product is taken over $F$ and the inclusion of the descending manifold $W^\mathrm{desc}(a)$, and the fact that $\CO^0_\mathrm{pseudo}$ and $\CO^0$ occur with opposite signs comes from \cite[(94) and (95)]{BCEG} (in the cited formulae set $X=M$ and $z=a$, and take $Y$ to be the fibre product defining the part of the trajectory not involving the input at the interior marked point).  Since these terms together describe the oriented boundary of a compact $1$-manifold, we get $\CO^0_\mathrm{pseudo}(\PD[F]) = \CO^0(\alpha)$ in $HF^*$.

We claim that $\alpha$ is the Morse-theoretic Poincar\'e dual of $[F]$, which then gives the desired equality.  To prove this, take a Morse--Smale pair $(h', g_X')$ on $X$ such that all ascending manifolds are transverse to $F$ and its boundary strata.  We orient the descending manifolds, which co-orients the ascending manifolds, and then the coefficient of each index $2l$ critical point $a'$ of $h'$ in $[F]$ is $\# (M \times_X W^\mathrm{asc}(a'))$.  Here the signs come directly from the orientation of $M$ and co-orientation of $W^\mathrm{asc}(a')$, i.e.~from the orientation sign of
\begin{equation}
\label{eqMorseSign}
DF : TM \xrightarrow{\sim} TX/TW^\mathrm{asc}(a'),
\end{equation}
independent of the conventions of \cite{BCEG}.

Now recall that Morse-theoretic Poincar\'e duality is realised by reversing the sign of the Morse function, so take $h'$ to be $-h$ and $g_X'$ to be $g_X$, where $(h, g_X)$ is the Morse--Smale pair used above (in particular, the one in the definition of $\alpha$).  There is a natural bijection $a' \leftrightarrow a$ between critical points of $h'$ and $h$; as a bijection between subsets of $X$ this is just the identity map, but we will keep separate notation $a'$ and $a$ so we know which Morse function we have in mind.  We see that $\PD[F]$ coincides with $\alpha$ as in \eqref{eqMorsePD}, as long as we can show that the orientation attached to $M \times_X W^\mathrm{desc}(a)$ by \cite[Section A.1.8]{BCEG}, after orienting $W^\mathrm{desc}(a)$ so that
\begin{equation}
\label{eqMorseSplitting}
T_aX \cong T_aW^\mathrm{desc}(a') \oplus T_aW^\mathrm{desc}(a)
\end{equation}
is orientation-preserving, agrees with the corresponding signs coming from \eqref{eqMorseSign}.  Note that the orientation on $W^\mathrm{desc}(a)$ induced by \eqref{eqMorseSplitting} depends on the orientation of $X$ (which is needed to define $\PD$; we always use the symplectic orientation) and the orientation we chose on $W^\mathrm{desc}(a')$.  In principle it is also sensitive to the ordering of the summands on the right-hand side, but this doesn't matter because $\dim M$ is even.  If this were not the case then would also have to worry about the duality conventions discussed in \cite[Section A.2.6]{BCEG}.

We now check that the two signs do indeed agree, so suppose that $(m, x)$ is a point in
\[
M \times_L W^\mathrm{desc}(a) = M \times_L W^\mathrm{asc}(a').
\]
 Let $e_1, \dots, e_{2l}$ be a positively-oriented basis for $T_mM$, and let $f_1, \dots, f_{2(n-l)}$ be a basis for $T_xW^\mathrm{desc}(a)$ which is positively-oriented according to \eqref{eqMorseSplitting}.  The orientation sign of \eqref{eqMorseSign} is positive if and only if $D_xF(e_1), \dots, D_xF(e_{2l}), f_1, \dots, f_{2(n-l)}$ is a positively-oriented basis for $T_xX$.  We must compare this with the sign from $M \times_X W^\mathrm{desc}(a)$ given by \cite[Section A.1.8]{BCEG}.  This is the orientation sign of the isomorphism
\[
T_mM \oplus T_xX \oplus T_xW^\mathrm{desc}(a) \xrightarrow{\theta} T_xX \oplus T_xX
\]
defined by $\theta(v_M, v_X, v_W) = (D_mF(v_M)-v_X, v_X-v_W)$.  This map can be homotoped through isomorphisms to $(D_mF(v_M)-v_W, v_X)$, and after swapping the $T_xX$ and $T_xW^\mathrm{desc}(a)$ summands on the left-hand side (this makes no difference since $\dim X \cdot \dim W^\mathrm{desc}(a)$ is even) we can cancel the rightmost summands from both sides, and we are left with the orientation sign of the basis $e_1, \dots, e_{2l}, -f_1, \dots, -f_{2(n-l)}$ for $T_xX$.  Reversing the sign of the $f_j$ makes no difference (there's an even number of them), so we conclude that both of the relevant signs are positive if and only if $e_1, \dots, e_{2l}, f_1, \dots, f_{2(n-l)}$ is positively-oriented, completing the proof.
\end{proof}

\begin{defn}
\label{defFregular}
The auxiliary data $\mathscr{D} = (f, g, J, H)$ are \emph{$F$-regular} if they can be used to compute $\CO^0(\PD[F])$ via $\CO^0_\mathrm{pseudo}(\PD[F])$.
\end{defn}


\subsection{Hamiltonian perturbations}
\label{sscHamPerts}

We end our Floer theory review with a brief summary of Hamiltonian perturbations following \cite[Section 5.3.7]{BCQS} and \cite[(8f)]{SeidelBook} (see also \cite[Chapter 8]{BigMcS}).  The perturbation $H$ is a $1$-form on the disc $D$, valued in smooth functions on $X$.  There is an induced $1$-form $Y$ on $D$, valued in Hamiltonian vector fields on $X$, and the perturbed holomorphic curve equation is $(D u - Y)^{0,1} = 0$.  We call solutions (with boundary on $L$) \emph{$(J,H)$-holomorphic} discs.  Incorporating $H$ gives extra freedom for attaining transversality of moduli spaces, and we now explain how Gromov compactness is modified.

It was observed by Gromov \cite[1.4.C']{Gro} that $(J,H)$-holomorphic discs can be viewed as $J_H$-holomorphic sections of the trivial fibration $D \times X \rightarrow D$, where $J_H$ is the almost complex structure
\[
\begin{pmatrix} i & 0 \\ JY-Yi & J\end{pmatrix}
\]
on $D \times X$.  Here we are using the obvious splitting $T(D\times X) = TD \oplus TX$, and $i$ is the standard complex structure on $D$.  The boundaries of these sections lie on the Lagrangian $\pd D \times L \subset D \times X$.

Write $\omega_D$ for the standard area form on $D$ and $\pi_D$ and $\pi_X$ for the projections from $D \times X$ to $D$ and $X$ respectively.  We claim that $J_H$ is tamed by the symplectic form $\omega_\kappa \coloneqq \kappa \pi_D^* \omega_D +\pi_X^* \omega$ as long as $\kappa > |JY-Yi|^2/4$---this will then allow us to apply Gromov compactness in $D \times X$.  We need to show that $\omega_\kappa(v, J_Hv) > 0$ for all non-zero vectors $v$, and to do this simply write $v$ as $v_D+v_X$, under the splitting of $T(D \times X)$, and compute:
\[
\omega_\kappa(v, J_Hv) = \kappa |v_D|^2 + |v_X|^2 + \omega(v_X, (JY-Yi)v_D) \geq \kappa |v_D|^2 + |v_X|^2 - |JY-Yi| |v_D| |v_X|.
\]
This quadratic form is positive if $\kappa > |JY-Yi|^2/4$.  The norm on $TD$ is that induced by $\omega_D$ and $i$, the one on $TX$ is that induced by $\omega$ and $J$, and the one applied to $JY-Yi$ is the corresponding Frobenius norm (sum of the squares of matrix entries with respect to orthonormal bases).

So fix $\kappa$, and restrict attention to $J$ and $H$ satisfying $|JY-Yi|^2 < 4\kappa$.  Given a sequence $(J_\nu, H_\nu)$ which $C^\infty$-converges to $(J, H)$, and a sequence $u_\nu$ of $(J_\nu, H_\nu)$-holomorphic discs of bounded area in $X$ (hence also bounded area in $D \times X$), Gromov compactness in $D \times X$ shows that some subsequence of the $u_\nu$ Gromov-converges to a limit $u$ which is a stable $J_H$-holomorphic disc with boundary on $\pd D \times L$.  Since the projection $\pi_D$ is holomorphic, any sphere component of $u$ must lie in a fibre.  Similarly, any disc component must lie in a fibre (necessarily over $\pd D$) or its projection to $D$ must have strictly positive degree.  Since each $\pi_D \circ u_\nu$ has degree $1$, we conclude that $u$ has a single disc component which is a section of $\pi_D$ (contributing $1$ to $\deg \pi_D \circ u$), and all other components lie in the fibres.  Since $J_H$ restricted to the fibres coincides with $J$, these fibre components must $J$-holomorphic.  Translating this into $X$, the limit curve $u$ has a `main' disc component, which is $(J,H)$-holomorphic, and possibly some bubble components, which are just $J$-holomorphic.

The approach taken in \cite{BCQS} is slightly different, in that the symplectic form $\omega_\kappa$ is deformed using $H$.  This makes $J_H$ compatible with it, rather than just tamed by it, but this is not necessary for Gromov compactness.  In \cite{BigMcS,SeidelBook} a deformed energy identity is used to prove compactness, and whilst this is more appropriate for non-compact curves it is not needed in our setting.


\section{The closed--open map for homogeneous Lagrangians}\label{secCO}

In this section we introduce a technique for computing part of the closed--open map in the presence of continuous symmetries, in the form of $K$-homogeneity.


\subsection{Homogeneity}
\label{sscHomog}

Recall from \cref{labHomLag} that a monotone Lagrangian $L \subset X$ is called $K$-homogeneous if $X$ is compact and K\"ahler, $K$ is a compact connected Lie group acting on $X$ by holomorphic automorphisms, and $L$ is a $K$-orbit.  We say $L$ is \emph{sharply} homogeneous if $\dim L = \dim K$.  For the remainder of this section fix a $K$-homogeneous monotone Lagrangian $L$, satisying the usual conditions from \cref{sscCOintro}.  In this subsection we make some preliminary observations.

Following \cite[Section 15.1]{HiNe}, a \emph{universal complexification} of a (real) Lie group $H$ is a complex Lie group $H_\C$ together with a Lie group morphism $\eta_H : H \rightarrow H_\C$ such that any Lie group morphism $H \rightarrow H'$, with $H'$ a complex Lie group, factors uniquely through $\eta_H$.  Universal complexifications always exist \cite[Theorem 15.1.4]{HiNe}, and are unique up to isomorphism in the obvious sense, although the map $\eta_H$ need not be an injection or even an immersion.  For compact groups, however, the universal complexification is well-behaved \cite[Theorem 15.2.1]{HiNe}: $\eta_K : K \rightarrow K_\C$ is injective, so we can view $K$ as a subgroup of $K_\C$, the Lie algebra of $K_\C$ is canonically isomorphic to $\mathfrak{k} \otimes \C$ (where $\mathfrak{k}$ is the Lie algebra of $K$) and the polar map
\[
\Phi : \mathfrak{k} \times K \rightarrow K_\C \text{\quad given by\quad} (\xi, k) \mapsto \exp_{K_\C}(i\xi) k
\]
(or $k\exp_{K_\C}(i\xi)$) is a diffeomorphism.  The prototypical example is the inclusion of the unit circle $\U(1)$ in $\C^*$, or more generally the inclusion of the torus $T^n$ in $(\C^*)^n$ or of the unitary group $\U(n)$ (or special unitary group $\SU(n)$) in $\mathrm{GL}(n, \C)$ (respectively $\SL(n, \C)$).

In the case of our $K$-homogeneous $(X, L)$ we denote the universal complexification $K_\C$ by $G$.

\begin{lem} \label{labCxAction}  The action of $K$ on $X$ complexifies to a holomorphic action of $G$ on $X$.  The $G$-orbit $W$ containing $L$ is dense, and its complement $Y$ is analytically closed.
\end{lem}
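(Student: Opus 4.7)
The plan is to first extend the $K$-action to a holomorphic $G$-action using the universal property of the complexification, then show the orbit $W = G \cdot L$ is open by a dimension count using the totally real hypothesis, and finally deduce that the complement $Y$ is analytically closed via a rank-drop locus for the infinitesimal action map.

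For the extension, I would invoke the Bochner--Montgomery theorem: the group $\Aut(X, J)$ of biholomorphisms of the compact complex manifold $X$ is a finite-dimensional complex Lie group acting holomorphically on $X$. The $K$-action defines a morphism of real Lie groups $K \to \Aut(X, J)$, which by the universal property of $\eta_K \mc K \to G$ factors uniquely as a morphism of complex Lie groups $G \to \Aut(X, J)$. Composing with the evaluation map $\Aut(X, J) \times X \to X$ gives the desired holomorphic action $G \times X \to X$ extending the $K$-action.

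For openness, fix $p \in L$ and consider the infinitesimal action $\phi_p \mc \mathfrak{g} \to T_pX$, $\xi \mapsto X_\xi|_p$. Since the $G$-action is holomorphic and $\mathfrak{g} = \mathfrak{k} \oplus i\mathfrak{k}$, the image of $\mathfrak{k}$ is $T_pL$ (as $L$ is the $K$-orbit through $p$) and the image of $i\mathfrak{k}$ is $J T_pL$. Because $L$ is totally real with $\dim_\R L = \dim_\C X = n$, we have $T_pL \cap JT_pL = 0$ and hence $T_pL + JT_pL = T_pX$, so $\phi_p$ is surjective. Applying the constant-rank theorem to the orbit map $G \to X$, $g \mapsto g \cdot p$, shows that $W$ contains a neighbourhood of $p$; $G$-invariance then gives that $W$ is open in $X$.

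For the final statement, the infinitesimal action globalises to a morphism of holomorphic vector bundles $\phi \mc \mathfrak{g} \otimes_\C \mathcal{O}_X \to TX$. Any $G$-orbit through a point $x$ is a complex submanifold of $X$ whose tangent space at $x$ is $\img \phi_x$, so $Y$ is precisely the locus where $\phi_x$ fails to have maximal rank $n$. This locus is cut out locally by the vanishing of all $n \times n$ minors of $\phi$ in a trivialisation of $TX$, and is therefore an analytic subset of $X$. Since $Y$ is a proper analytic subset of the connected manifold $X$ (proper because $W$ is non-empty), its complement $W$ is dense. The main technical input is the Bochner--Montgomery theorem in the first step; the only other point where care is needed is the rank-drop argument in the last step, to see that one obtains an honest analytic subvariety rather than merely a closed subset.
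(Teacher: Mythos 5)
Your first two steps match the paper's argument exactly (the paper also extends the action via the complex Lie group structure on $\mathrm{Aut}(X)$ and the universal property of $\eta_K$, and detects openness through surjectivity of the infinitesimal $\mathfrak{g}$-action, which the paper encodes as non-vanishing of the wedge sections $\sigma_I$ — the same thing as your $n \times n$ minors). The gap is in the final step, at the sentence ``Any $G$-orbit through a point $x$ is a complex submanifold \dots\ so $Y$ is precisely the locus where $\phi_x$ fails to have maximal rank.'' That implication does not follow. What you have shown is that the maximal-rank locus $U = \{x : \phi_x \text{ surjective}\}$ is open with analytic complement, and that $W \subset U$ with $W$ open. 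But $U$ is a priori only a \emph{union} of open $G$-orbits; if it contained a second open orbit $W' \neq W$, then $Y = X \setminus W$ would contain the nonempty open set $W'$ and hence could not be analytically closed, and $W$ would not be dense. So both remaining conclusions of the lemma genuinely hinge on $U$ being a single orbit, which you have not established.

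The missing ingredient is a connectedness argument, which is exactly how the paper closes this step: the complement of $U$ is a proper analytic subset of the connected manifold $X$, hence has real codimension at least $2$ and cannot disconnect $X$, so $U$ is connected. Since $U$ is partitioned into open $G$-orbits, connectedness forces it to be a single orbit, which contains $L$; therefore $W = U$ and $Y$ is the rank-drop locus, which is analytic by your minors argument. With that one sentence added your proof is complete and coincides with the paper's.
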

\begin{proof} The first claim is a standard application (see, e.g.~\cite[Section 4]{GuSt}) of the universal property of complexifications, using the fact that since $X$ is compact its group of holomorphic automorphisms has the structure of a complex Lie group \cite[Section III Theorem 1.1]{Ko}.

Now take a basis $\xi_1, \dots, \xi_m$ for $\mathfrak{k}$ and for each subset $I=\{i_1 < \dots < i_n\}$ of $\{1, \dots, m\}$ consider the holomorphic section $\sigma_I$ of $K_X^{-1} = \Lambda_\C^n TX$ (i.e.~the top exterior power of $TX$ over $\C$) defined by
\[
\sigma_I(x) = (\xi_{i_1} \cdot x) \wedge \dots \wedge (\xi_{i_n} \cdot x).
\]
The common zeros of the $\sigma_I$ form a proper analytically closed subset $Y$ of $X$, whose complement $W$ is therefore dense and connected ($X$ itself is connected, and $Y$ has real codimension at least $2$ so cannot disconnect it).  $W$ is partitioned in $G$-orbits, each of which is open since the infinitesimal action on $W$ is surjective, and so by connectedness it is a single orbit.  It clearly contains $L$.
\end{proof}

If $(X, L)$ is sharply $K$-homogeneous then there is only one $n$-tuple $I$, and we denote $\sigma_I$ just by $\sigma$.

\begin{lem}\label{labAntiCanon}  If $(X, L)$ is sharply $K$-homogeneous then:
\begen
\item\label{antitm1} $L$ is parallelisable, so in particular is orientable and spin.
\item\label{antitm2} $Y$ is cut out by the vanishing of the holomorphic section $\sigma$ of $K_X^{-1}$ so is a divisor, meaning an analytic subvariety of complex codimension $1$.
\item \label{antitm2b} $L$ is special Lagrangian in $W$ in the sense of \cite[Definition 2.1]{Au}, meaning that the holomorphic volume form $\Omega = \sigma|_W^{-1}$ on $W$ has constant phase (argument) when restricted to $L$.
\item\label{antitm3} The Maslov index of a holomorphic disc $u : (D, \pd D) \rightarrow (X, L)$ is twice the sum of the vanishing orders of $\sigma \circ u$ at the intersection points of $u$ with $Y$.
\end{enumerate}
\end{lem}
\begin{proof}  \ref{antitm1} The infinitesimal action of $\mathfrak{k}$ on $L$ exhibits an isomorphism between $TL$ and the trivial bundle $\mathfrak{k} \times L$.

\ref{antitm2} We just saw this above.

\ref{antitm2b} The section $\sigma_L$ lies in $\Lambda^n_\R TL \subset \Lambda^n_\C TX|_L$, so $\Omega|_L$ is real.

\ref{antitm3} This is equivalent to Auroux's result \cite[Lemma 3.1]{Au}, which is proved by equating the twisting of $\Lambda^n_\R TL$ around the boundary of $u$ with that of $\R \sigma$, and using the argument principle.
\end{proof}

\begin{defn}\label{StdSpin}
The \emph{standard} spin structure on a sharply $K$-homogeneous Lagrangian $L$ is that induced by the trivialisation $TL = \mathfrak{k} \times L$.  This coincides with the definition for toric fibres given by Cho \cite{ChoCl} and Cho--Oh \cite{ChoOh} using toric charts.
\end{defn}

Next recall that a disc $u : (D, \pd D) \rightarrow (X, L)$ is axial if, up to reparametrisation, it is of the form $u(z) = e^{-i\xi \log z} u(1)$ on $D \setminus \{0\}$, where $e^\cdot$ is the exponential map $\mathfrak{g} = \mathfrak{k} \otimes \C \rightarrow G$ and $\xi \in \mathfrak{k}$ is such that $e^{2\pi \xi}$ fixes $u(1)$.  Given such a $\xi$, the expression $e^{-i\xi \log z}$ always defines a holomorphic map $(D\setminus \{0\}, \pd D) \rightarrow (X, L)$, but it need not patch over $0$.  For example, if $X$ is the torus $\C / (\Z\oplus i\Z)$, $L$ is $\R/\Z \subset X$, $K$ is $\U(1)$ acting by translations in the real direction, and $\xi$ is $i \in i\R = \mathfrak{u}(1)$, then the punctured disc $z \mapsto e^{-i \xi \log z}\cdot 0$ wraps around the torus infinitely many times as $z \rightarrow 0$.  However, for Hamiltonian actions such expressions do always patch over.

\begin{lem}\label{labHamPatch}  Suppose the $K$-action is Hamiltonian with moment map $\mu : X \rightarrow \mathfrak{k}^*$.  If $x$ is a point in $L$ and $\xi \in \mathfrak{k}$ satisfies $e^{2\pi \xi}x=x$, then $u : z \mapsto e^{-i \xi \log z}x$ extends continuously, and hence holomorphically, over $0$.
\end{lem}
\begin{proof}  By removal of singularities \cite[Theorem 4.2.1]{McS} it suffices to show $u$ has finite energy. Taking polar coordinates $r$ and $\theta$ on $D \setminus \{0\}$, the energy density is given by
\[
\omega(\pd_r u, \pd_\theta u) \ \diff r \wedge \diff \theta = \omega(\pd_r u, \xi \cdot u) \ \diff r \wedge \diff \theta = \ip{\pd_r u \, \lrcorner \, \diff \mu}{\xi} \ \diff r \wedge \diff \theta,
\]
where $\ip{\cdot}{\cdot}$ denotes the pairing between $\mathfrak{k}^*$ and $\mathfrak{k}$ (Cho--Oh did this computation in the toric case in \cite[Theorem 8.1]{ChoOh}).  The right-hand side is simply $u^*\diff\ip{\mu}{\xi} \wedge \diff\theta$, so the total energy is at most
\[
2 \pi (\max_X \ip{\mu}{\xi}-\min_X \ip{\mu}{\xi}) < \infty.\qedhere
\]
\end{proof}


\subsection{Computing the closed--open map}
\label{sscComputingCO}

Equip $L$ with a relative spin structure and local system to give a $K$-homogeneous monotone Lagrangian brane $L^\flat$.  We abbreviate $QH^*(X; \Lambda)$ and $HF^*(L^\flat, L^\flat; \Lambda)$ to $QH^*$ and $HF^*$ respectively.  Now fix a subvariety $Z \subset X$ which is $K$-invariant setwise and disjoint from $L$.  The inclusion of its smooth locus defines a pseudocycle $F : M \rightarrow X$, which carries a canonical complex orientation, and by definition the homology class of $Z$ is $[F]$.

Our goal is to compute $\CO^0(\PD(Z))$.  Recall from \cref{lemCOpseudo} that this counts rigid trajectories as shown in \cref{figCOpseudoTraj}, where $u_1$ carries a Hamiltonian perturbation.
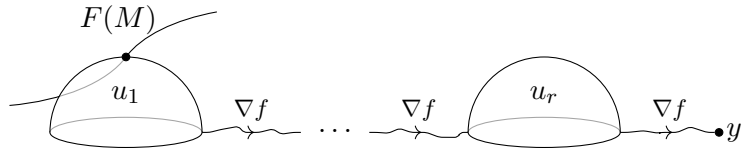
\begin{figure}[ht]
\centering
\begin{tikzpicture}
[blob/.style={circle, draw=black, fill=black, inner sep=0, minimum size=\blobsize}]
\definecolor{green}{rgb}{0, 0.9, 0.2}
\def\blobsize{1mm}

\draw (0, 1) arc (20:80: 2 and -1);
\draw (0, 1) arc (20:70: -2 and 1);
\draw (-0.1, 1.5) node{$F(M)$};
\critpt{7.8}{0}{0.2}{0}{y}{y}

\disc{-1}{1}{0}{$u_1$}
\flow{(1, 0)}{(2.3, 0)}{90}{$\nabla f$}{0}{0.3}
\draw (2.75, 0) node{$\smash\dots$};
\flow{(3.2, 0)}{(4.5, 0)}{93}{$\nabla f$}{0}{0.3}
\disc{4.5}{6.5}{0}{$u_r$}
\flow{(6.5, 0)}{(y)}{94}{$\nabla f$}{0}{0.3}

\draw (0, 1) node[blob]{};

\end{tikzpicture}
\caption{A pearly trajectory contributing to the coefficient of $y$ in $\CO^0(\PD(Z))$.\label{figCOpseudoTraj}}
\end{figure}
Letting $k$ denote the complex codimension of $Z$, the rigidity condition amounts to $|y| = 2k-\mu(A)$, where $A$ is the total disc class.

The strategy is roughly to consider such trajectories for the standard complex structure $J_\mathrm{std}$, without a Hamiltonian perturbation, and show by a disc classification result that the only possible trajectories have a single disc (i.e.~$r=1$) and output at the Morse minimum (we assume for simplicity that there is a unique local min).  Moreover, any such disc meets $Z$ in its smooth locus and is axial, and these trajectories are transversely cut out.

We then show that for sufficiently nearby $F$-regular auxiliary data the trajectories contributing to $\CO^0(\PD(Z))$ have the same shape, namely a single disc and output at the minimum.  Using a cobordism we show that their count coincides with the count for $J_\mathrm{std}$, and verify that for the standard spin structure all trajectories contribute positively.

In order to make the comparison between the $F$-regular data and $J_\mathrm{std}$, we need to rule out sphere bubbling at the interior marked point for $J_\mathrm{std}$.  This is where the assumption that $k \leq N_X^+$ enters, where $N_X^+ \in \Z_{>0} \cup \{\infty\}$ is the minimal Chern number of $J_\mathrm{std}$-holomorphic spheres.


\subsection{Axial discs}\label{sscAxDiscs}

The first step is to analyse $J_\mathrm{std}$-holomorphic discs.  In this subsection we recall some ideas from \cite{EL1} which we shall need.

\begin{defn}
For an $\omega$-compatible almost complex structure $J$ and a class $A$ in $H_2(X, L; \Z)$, let
\[
\mathcal{M}(A, J) = \{u: (D, \pd D) \rightarrow (X, L) : \conj{\pd}_Ju=0 \text{ and } [u] = A\}
\]
be the moduli space of $J$-holomorphic discs in class $A$.  This has virtual dimension $n+\mu(A)$.  For a point $p$ in $D$ we write $\ev_p$ for the evaluation map at $p$.
\end{defn}

Using $K$-homogeneity, Evans--Lekili made the following important observation

\begin{prop}[{\cite[Lemma 2.11, Lemma 3.2]{EL1}}]
\label{ModSmooth}
For all classes $A$ the space $\mathcal{M}(A, J_\mathrm{std})$ is a smooth manifold of dimension $n+\mu(A)$.
\end{prop}
\begin{proof}[Sketch proof]
Given a disc $u$ in $\mathcal{M}(A, J_\mathrm{std})$, we need to show that the $\conj{\pd}$-operator is surjective when acting on sections of $u^*TX$ that lie in $u^*TL$ over the boundary.  By Oh's splitting theorem \cite[Theorem I]{Oh}, the bundle pair $(u^*TX, u|_{\pd D}^*TL)$ decomposes into summands of the form $(\underline{\C}, z^{\kappa/2}\underline{\R})$, where $\underline{\C}$ and $\underline{\R}$ are the trivial bundles over the disc and $z$ is the standard complex coordinate, and it suffices to show that each `partial index' $\kappa \in \Z$ is at least $-1$.

We claim that each $\kappa$ is in fact non-negative, and for this it suffices to show that the infinitesimal evluation map $D\ev_p : \ker \conj{\pd} \rightarrow T_{u(p)}L$ at a point $p$ in $\pd D$ is surjective.  To prove that this is indeed the case, note that for any $\xi$ in $\mathfrak{k}$ the map $z \mapsto \xi \cdot u(z)$ defines an element of $\ker \conj{\pd}$ (i.e.~a holomorphic section of $(u^*TX, u|_{\pd D}^*TL)$) which evaluates to $\xi \cdot u(p)$ at $p$.  Since the infinitesimal $K$-action is surjective at each point of $L$, this proves the result.
\end{proof}

\begin{rmk}
\label{SphereModSmooth}
The reader may be more familiar with the corresponding argument for holomorphic spheres $u : \C\P^1 \rightarrow X$, which is completely analogous: after decomposing $u^*TX$ into a sum of line bundles using Grothendieck's splitting theorem, one must check that each summand has Chern number $\geq -1$, and if one can show that $D\ev_p (\ker \conj{\pd})$ spans $T_{u(p)}X$ for some $p$ then this proves that in fact all summands have Chern number $\geq 0$.  This holds if $u(p)$ lies in the open $G$-orbit $W$.
\end{rmk}

Each space $\mathcal{M}(A, J_{\mathrm{std}})$ carries an action of $K$, by translation, and $\mathrm{PSL}(2, \R)$ by reparametrisation.  By combining these actions, Evans and Lekili showed

\begin{prop}[{\cite[Corollary 3.10]{EL1}}]
\label{Ind2Ax}
Any $J_\mathrm{std}$-holomorphic disc of index $2$ is axial.
\end{prop}
\begin{proof}[Sketch proof]
Let $u$ be an element of $\mathcal{M} = \mathcal{M}(A, J_\mathrm{std})$, where $\mu(A)=2$.  By \cref{ModSmooth}, $\mathcal{M}$ is a smooth $(n+2)$-manifold.  The infinitesimal actions of $K$ and $\mathrm{PSL}(2, \R)$ at $u$ have ranks $\geq n$ and $3$ respectively, so there exist non-zero $\xi \in \mathfrak{k}$ and $\eta \in \mathfrak{psl}(2, \R)$ such that $\xi \cdot u = \eta \cdot u$ in $T_u\mathcal{M}$.  A little work (see \cite[Lemma 2.5]{Sm}) shows that $\eta$ is conjugate to a generator of a rotation, so after reparametrising $u$ and rescaling $\xi$ we obtain $e^{\theta \xi}u(z) = u(e^{i\theta}z)$ for all $z$ and $\theta$.  This shows that along its boundary $u$ agrees with the axial disc generated by $\xi$, so by a Schwarz reflection argument the two discs coincide.
\end{proof}

They also showed \cite[Corollary 3.11]{EL1} that an index $4$ disc cleanly intersecting a $K$-invariant subvariety of complex codimension $2$ is axial.  We wish to extend this to any index $2k$ disc meeting a $K$-invariant subvariety of complex codimension $2k$, and show that the corresponding moduli space is transversely cut out.


\subsection{Discs meeting invariant subvarieties}

We next prove our main disc classification result, \cref{DiscClassification}, which is based on some preliminary transversality statements.  Before getting into these we introduce some terminology.  Given a smooth manifold carrying an action of a Lie group $H$, and a vector subspace $\mathfrak{h}'$ of the Lie algebra $\mathfrak{h}$, say that a submanifold $S$ is \emph{$\mathfrak{h}'$-invariant} if for all $p \in S$ we have $\mathfrak{h}'\cdot p \subset T_pS$, where the left-hand side is the infinitesimal action of $\mathfrak{h}'$ at $p$.

The key technical ingredient is

\begin{lem}\label{labInvtTrans}  If $S \subset X$ is a submanifold of which is $i \mathfrak{k}$-invariant, using the complexified action from \cref{labCxAction}, then for all classes $A$ the map $\ev_0 : \mathcal{M}(A, J_\mathrm{std}) \rightarrow X$ is transverse to $S$.
\end{lem}
\begin{rmk}
$\mathcal{M}(A, J_\mathrm{std})$ is smooth by \cref{ModSmooth} so transversality makes sense.
\end{rmk}
\begin{proof}  Fix an arbitrary $u$ in $\mathcal{M} = \mathcal{M}(A, J_\mathrm{std})$ with $\ev_0(u)$ in $S$, and let $p$ denote this point $u(0) \in S$.  Now take a vector $w \in T_pX$.  We need to find a tangent vector $\mathcal{M}$ at $u$, i.e.~a holomorphic section $v$ of $u^*TX$ which restricts to a section of $u|_{\pd D}^* TL$ on the boundary, such that $w-v(0)$ lies in $T_pS$.

First extend $w$ arbitrarily to a holomorphic local section $\tilde{w}$ of $u^*TX$.  We aim to write this section in terms of the $\mathfrak{g}$-action on a punctured neighbourhood of $0$, so pick $\xi_1, \dots, \xi_n \in \mathfrak{k}$ such that the $\xi_j \cdot u(1)$ form a basis of $T_{u(1)} L$.  Now consider the holomorphic sections $v_1, \dots, v_n$ of $u^*TX$ defined by $v_j(z) = \xi_j \cdot u(z)$, and the corresponding section $V \coloneqq v_1 \wedge \dots \wedge v_n$ of $u^* \Lambda^n_\C TX$.  By construction $V$ is holomorphic and not identically zero, so if it vanishes at $0$ then this zero must be isolated.  We conclude that the $v_j$ are fibrewise linearly independent on a punctured neighbourhood of $0$.  On such a neighbourhood there thus exist holomorphic functions $f_1, \dots, f_n$ such that
\[
\tilde{w} = \sum_j f_j v_j.
\]
If the $f_j$ were defined on the whole punctured disc and were real-valued over the boundary then $\sum_j f_j v_j$ would give the desired tangent vector to $\mathcal{M}$.  In general this will not be the case, but we will modify them to produce functions $F_j$ which do have these properties.

For each $j$ we have
\[
v_1 \wedge \dots v_{j-1} \wedge \tilde{w} \wedge v_{j+1} \wedge \dots \wedge v_n = f_j V
\]
and the left-hand side is holomorphic over $0$, so if $d$ denotes the vanishing order of $V$ at $0$ then $f_j$ has at worst a pole of order $d$.  Let the principal part of the Laurent series of $f_j$ (including the constant term) be
\[
\sum_{l=0}^d a_{jl}z^{-l},
\]
and let $a_{j0}$ have real and imaginary parts $b_j$ and $c_j$ respectively.  Now define a function $F_j$ by
\[
F_j =b_j + \sum_{l=1}^d \lb a_{jl}z^{-l}+\conj{a}_{jl}z^l\rb.
\]
Note that this is holomorphic on the punctured disc, and is real over the boundary.  Moreover, on a punctured neighbourhood of $0$ we have $f_j-F_j = ic_j + r_j$, where $r_j$ is a holomorphic function which vanishes at $0$.

Finally, let $v$ be the the holomorphic section of $u^*TX$ over $D \setminus \{0\}$ given by
\[
v = \sum_j F_jv_j.
\]
We claim this is the desired tangent vector to $\mathcal{M}$.  Since the $F_j|_{\pd D}$ are real, $v$ restricts to a section of $u|_{\pd D}^* TL$, so we just need to check that it extends over $0$ and satisfies $u(0) = w$ in $T_pX/T_pZ$.  To prove this, note that on a punctured neighbourhood of $0$ we have
\[
\tilde{w}-v = \sum_j (f_j - F_j)v_j = \sum_j (ic_j+r_j)\xi_j \cdot u.
\]
The right-hand side is holomorphic over $0$, so $v$ itself must be, and we have
\[
w-v(0) = \sum_j ic_j \xi_j \cdot p \in i \mathfrak{k} \cdot p \subset T_pZ,
\]
which gives the desired equality in $T_pX/T_pZ$.
\end{proof}

We also need an analgous result for spheres.  For $A \in H_2(X; \Z)$ write $\mathcal{M}^\mathrm{sph}_W(A, J_\mathrm{std})$ for the space of $J_\mathrm{std}$-holomorphic maps $\C\P^1 \rightarrow X$ which meet the open $G$-orbit $W$.  By \cref{SphereModSmooth} this is a smooth manifold of dimension $2n+\mu(A)$.

\begin{lem}\label{labSphereInvtTrans}  If $S \subset X$ is an $\mathfrak{g}$-invariant submanifold and $q$ is a point of $L$ then for all $A$ the map $\ev_0 \times \ev_\infty : \mathcal{M}^\mathrm{sph}_W(A, J_\mathrm{std}) \rightarrow X \times L$ is transverse to $S \times \{q\}$.
\end{lem}
\begin{proof}
We closely follow \cref{labInvtTrans}.  Take a sphere $u$ in $\mathcal{M} = \mathcal{M}^\mathrm{sph}_W(A, J_\mathrm{std})$ with $u(0)=p \in S$ and $u(\infty)=q$, and vectors $w_X \in T_pX$ and $w_L \in T_qL$.  We seek a holomorphic section $v$ of $u^*TX$ such that $w_X-v(0)$ lies in $T_pS$ and $w_L-v(\infty)=0$.

Extend $w$ to a holomorphic local section $\tilde{w}$ of $u^*TX$, and pick $\xi_1, \dots, \xi_n$ such that the sections $v_j : z \mapsto \xi_j \cdot u(z)$ span $T_qL$ at $\infty$.  Write $\tilde{w}=\sum_j f_j v_j$ on a punctured neighbourhood of $0$, and let $F_j$ be the principal part of $f_j$ (without the constant term).  Finally let $\lambda_1, \dots, \lambda_n \in \C$ be such that $\sum_j \lambda_j v_j(\infty)=w_L$.  Then
\[
v \coloneqq \sum_j (F_j+\lambda_j) v_j
\]
is a holomorphic section of $u^*TX$ with the required properties.
\end{proof}

Note that we require $S$ to be $\mathfrak{g}$-invariant, not just $i\mathfrak{k}$-invariant as in \cref{labInvtTrans}.  This is because the constant term in $v$ differs from that in $\tilde{w}$ by a $\C$-linear combination of the $v_j$, not just an $i\R$-linear combination.

\begin{defn}
For a subset $S \subset X$ and an integer $j$, we write $\mathcal{M}^S_{\mu=2j}$ for the space of index $2j$ discs mapping $0$ to $S$.
\end{defn}

We can now state and prove our main disc classification result.

\begin{thm}
\label{DiscClassification}
Let $(X, L)$ be $K$-homogeneous and $Z \subset X$ a subvariety of complex codimension $k$, which is $K$-invariant setwise.  Suppose that $u$ is a $J_\mathrm{std}$-holomorphic stable disc of index $\leq 2k$ meeting $Z$, and that $k \leq N_X^+$.  Then $u$ has index exactly $2k$ and comprises a single component, which is an axial disc meeting $Z$ in its smooth locus.
\end{thm}
\begin{proof}
First let $S_1$ denote the smooth locus $M$ of $Z$---this is a $K$-invariant complex submanifold of $X$.  The complement $Z \setminus S_1$ is a proper subvariety of $Z$, and is also $K$-invariant, so the smooth loci of its components are again $K$-invariant complex submanifolds.  Repeating, we partition $Z$ into $K$-invariant complex submanifolds $S_1, \dots, S_l$ such that $\codim_\C S_j > k$ for all $j>1$.

Now let $\hat{u}$ denote a non-constant component of $u$ meeting $Z$.  We claim that $\hat{u}$ must be a disc.  Given this, after reparametrising we may assume that $\hat{u}(0)$ lies in $Z$.  Then $\hat{u}$ is an element of $\mathcal{M}^{S_j}_{\mu=2m}$ for some $j$ and some $m \leq k$.  By \cref{labInvtTrans} this moduli space is transversely cut out and is thus a smooth manifold of dimension $n+2m-2\codim_\C S_j$.  Moreover, it carries a $K$-action by translation, which commutes with $\ev_1$, so
\[
\ev_1 : \mathcal{M}^{S_j}_{\mu=2m} \rightarrow L
\]
is a submersion.  In particular, $\mathcal{M}^{S_j}_{\mu=2m}$ has dimension at least $n$, so we must have $m \geq \codim_\C S_j$.  The only possibility is that $j=1$ and $m=k$, i.e.~$\hat{u}$ has index $2k$ so is the unique component of $u$, and it meets $Z$ in its smooth locus.

To show axiality we follow \cref{Ind2Ax}.  We know that $\mathcal{M}^{S_1}_{\mu=2k}$ is a smooth $n$-manifold and carries a $K$-action by translation and a $\mathrm{U}(1)$-action by rotational reparametrisation.  By counting dimensions, for any disc $u$ in this space there exists $\xi$ in $\mathfrak{k}$ such that $\xi \cdot u$ coincides with the infinitesimal rotation of $u$, i.e.~$Du(\pd_\theta)$.  As in \cref{Ind2Ax} we conclude that $u$ is axial.

We are left to prove the claim that $\hat{u}$ is a disc, so suppose for contradiction that it is a sphere.  In this case, it must have index at least $2N_X^+$, so we have equalities in all of
\[
\mu(\hat{u}) \leq \mu(u) \leq 2k \leq 2N_X^+.
\]
In particular, all other components of $u$ are constant, so $\hat{u}$ must meet $L$ as well as $Z$ (which it does in $S_j$ say).  After reparametrising we may assume that $\hat{u}(0) \in S_j$ and $\hat{u}(\infty) \in L$.  By \cref{labSphereInvtTrans} the space of holomorphic spheres $\tilde{u}$ of index $2k$ with $\tilde{u}(0) \in S_j$ and $\tilde{u}(\infty) = \hat{u}(\infty)$ is a smooth manifold of dimension $2k - 2\codim_\C S_j \leq 0$.  However, it also contains $\hat{u}$ and carries a non-trivial action of $\C^*$ by reparametrisation, so must have dimension at least $2$.  This gives the desired contradiction, showing that $\hat{u}$ must have been a disc, and completing the proof.
\end{proof}


\subsection{Proof of \cref{mthmCO}}

We are now ready to return to our objective of computing $\CO^0(\PD(Z))$ as outlined in \cref{sscComputingCO}.  Recall that $L^\flat$ is a $K$-homogeneous monotone Lagrangian brane, and $Z$ a $K$-invariant subvariety of complex codimension $k \leq N_X^+$, disjoint from $L$ and represented by the pseudocycle $F:M \rightarrow X$ given by the inclusion of its smooth locus.

Fix a Morse--Smale pair $(f,g)$ such that $f$ has a unique local minimum $y_\mathrm{min}$, and consider auxiliary data $\mathscr{D}=(f, g, J, H)$.  We do not yet assume they are $F$-regular in the sense of \cref{defFregular}.  If $(J, H)=(J_\mathrm{std}, 0)$ then \cref{DiscClassification} tells us that that only trajectories of shape \cref{figCOpseudoTraj} of virtual dimension $\leq 0$ (which amounts to $|y|+\mu(A) \leq 2k$) have a single disc, which is axial, and have output $y_\mathrm{min}$.  We would like to transfer this property, minus the axiality statement, to nearby $(J, H)$, and this is the subject of the next result.

In light of \cref{sscHamPerts}, by a $(J, H)$-holomorphic stable disc we mean a stable disc with one distinguished disc component that is $(J, H)$-holomorphic, and all other components $J$-holomorphic.  When talking about such discs meeting $Z$, we require that the corresponding interior marked point is attached to the main component (as opposed to some other disc component), possibly via a tree of sphere components.

\begin{lem}
\label{DiscNearbyJH}
There exists a $C^\infty$-open neighbourhood $U$ of $(J_\mathrm{std}, 0)$ such that for all $(J, H)$ in $U$ the following holds: any $(J, H)$-holomorphic stable disc $u$ of index $\leq 2k$ and meeting $Z$ has index exactly $2k$, comprises just the main component, and meets $Z$ in its smooth locus.
\end{lem}
\begin{proof}
Suppose for contradiction that no such $U$ exists.  Then there exists a sequence $(J_\nu, H_\nu)$ converging to $(J_\mathrm{std}, 0)$, and a sequence of $(J_\nu, H_\nu)$-holomorphic stable discs $u_\nu$ meeting $Z$, such that for each $\nu$ either: the main component of $u_\nu$ has index $<2k$; or $u_\nu$ has a sphere component; or $u_\nu$ meets the singular locus of $Z$.

By \cref{sscHamPerts} there exists a subsequence which Gromov converges to a $J_\mathrm{std}$-holomorphic stable disc $u$ which hits $Z$ and either: has main component of index $< 2k$; or has a sphere component; or meets the singular locus of $Z$.  In the first case we may assume that there are no sphere components (otherwise we're the second case), so that the main component itself meets $Z$.  Then in any of the three cases no such $u$ can exist, by \cref{DiscClassification}, so we obtain the desired contradiction.
\end{proof}

Using this we deduce

\begin{prop}
\label{TrajNearbyJH}
Suppose $(J,H)$ lies in the set $U$ provided by \cref{DiscNearbyJH}, and consider the contribution to $\CO^0_\mathrm{pseudo}(\PD[F])$ of total class $A$ using data $\mathscr{D}=(f,g,J,H)$, which need not be $F$-regular.  The only contributions have $\mu(A)=2k$ and output $y_\mathrm{min}$, and the corresponding count of trajectories is the count of points in the (oriented) fibre product
\begin{equation}
\label{eqCOfibre}
M \times_X \mathcal{M}(A, (J, H)) \times_L \{y_\mathrm{min}\},
\end{equation}
where $M$ is the smooth locus of $Z$ and $\mathcal{M}(A, (J, H))$ maps to its left by $\ev_0$ and its right by $\ev_1$.  Moreover, the union of these spaces over any compact family of $(J, H)$'s in $U$ (so in particular over any path in $U$) is compact.
\end{prop}
\begin{proof}
By \cref{DiscNearbyJH} any trajectory contributing to $\CO^0_\mathrm{pseudo}(\PD[F])$ for $(J, H)$ in $U$ has a single disc, of index $2k$, and outputs $y_\mathrm{min}$.  The description of the oriented moduli spaces from \cite[Section A.2.3]{BCEG}, with the Morse input replaced by the pseudocycle, then reduces to the claimed fibre product.

We are left to show compactness, so suppose we have a sequence $(f, g, J_\nu, H_\nu)$ of data and a corresponding sequence of trajectories.  Each trajectory is described by its unique disc $u_\nu$, and by passing to a subsequence we may assume that $(J_\nu, H_\nu)$ converges to some $(J, H)$ in $U$ and $u_\nu$ converges to some $(J,H)$-holomorphic stable disc $u$ meeting both $Z$ (through its main component) and $y_\mathrm{min}$.  By \cref{DiscNearbyJH} this limit $u$ has only the main component, which is an index $2k$ disc meeting $Z$ in its smooth locus, so the corresponding trajectory lies in \eqref{eqCOfibre}.
\end{proof}

We are now in a position to precisely formulate and prove \cref{mthmCO}.

\begin{thm}[\cref{mthmCO}]
\label{thmCO}
Let $L^\flat \subset X$ be a $K$-homogeneous monotone Lagrangian brane, and $Z \subset X$ a setwise $K$-invariant subvariety of complex codimension $\leq N_X^+$, Poincar\'e dual to a class $\alpha$ in $QH^*(X; \Lambda)$.  Then
\[
\CO^0(\alpha) = T^{|\alpha|/N_L} \Big(\sum_u \pm \hol_{\mathscr{F}} (-\pd u)\Big) \cdot 1_L \in HF^*(L^\flat, L^\flat; \Lambda),
\]
where the sum is over axial index $2k$ discs $u$ satisfying $u(0) \in Z$ and $u(1) = y_\mathrm{min}$, and where $\hol_\mathscr{F} (-\pd u)$ denotes the holonomy of the local system $\mathscr{F}$ around the boundary of $u$, as an element of $\End_R \mathscr{F}_{y_\mathrm{min}}$ (as appearing in \eqref{eqPearlCx}).  The signs depend on the choice of relative spin structure, and are all $+$ for the standard spin structure (this is only defined if $L$ is \emph{sharply} $K$-homogeneous).
\end{thm}
\begin{rmk}
The appearance of $-\pd u$ rather than $\pd u$ is purely a result of our conventions regarding the direction of parallel transport.
\end{rmk}
\begin{proof}
By \cref{lemCOpseudo} we may compute $\CO^0(\alpha)$ via $\CO^0_\mathrm{pseudo}(\PD[F])$, for $F$-regular data $\mathscr{D}=(f, g, J, H)$, and by \cref{TrajNearbyJH} the latter is given by
\[
T^{|\alpha|/N_L} \Big(\sum_{\substack{A\\ \mu(A)=2k}} \#\big(M \times_X \mathcal{M}(A, (J, H)) \times_L \{y_\mathrm{min}\}\big) \cdot \hol_\mathscr{F} (-\pd u) \Big) \cdot 1_L.
\]
We claim that the $(J,H)$ can be replaced by $(J_\mathrm{std}, 0)$.  \Cref{DiscClassification} then tells us that the discs appearing are all axial, which gives the result, up to the final statement about signs.

In order to prove the claim note that for each $A$ the fibre product \eqref{eqCOfibre} is transversely cut out for both $(J,H)$ and $(J_\mathrm{std}, 0)$.  The former is by $F$-regularity, whilst the latter is by the fact that $\mathcal{M}^Z_{\mu=2k}$ is transversely cut out and the evaluation map $\ev_1$ from this space to $L$ commutes with the $K$-actions so is a submersion.  We may therefore choose a regular path $(J_t, H_t)$ in $U$ from $(J, H)$ to $(J_\mathrm{std}, 0)$, so that the corresponding $1$-parameter family of fibre products
\[
\bigcup_{t \in [0, 1]} M \times_X \mathcal{M}(A, (J_t, H_t)) \times_L \{y_\mathrm{min}\}
\]
is transversely cut out.  It is compact by the final part of \cref{TrajNearbyJH}, so gives an oriented cobordism between the $(J,H)$ and $(J_\mathrm{std},0)$ spaces, proving the claim.

Finally we need to check the signs, and we defer this to \cref{appOri}.
\end{proof}

\begin{rmk}
\label{rmkCodimCondition}
The codimension condition cannot straightforwardly be removed, as the following example shows.  Take $X$ to be the toric monotone blowup of $\C\P^2$ at three points, with hexagonal moment polytope, and $L$ to be the monotone fibre.  This has $N_X^+=1$.  The superpotential is
\[
W = x + xy + y + \frac{1}{x} + \frac{1}{xy} + \frac{1}{y},
\]
and the terms correspond, in cyclic order, to the monodromies of the local system around the index $2$ basis classes $[u_1], \dots, [u_6]$ in $H_2(X;L; \Z)$ which are dual to the components of the toric divisor.

Suppose for contradiction that \cref{thmCO} applies to the six toric fixed points, so we can compute $\CO^0(\PD(\text{point}))$ by counting index $4$ axial discs through any one of them, weighted by monodromy.  Each of the fixed points is hit by a unique such disc (after imposing $u(1)=y_\mathrm{min}$), in classes $[u_1]+[u_2], \dots, [u_5]+[u_6], [u_6]+[u_1]$, so the corresponding monodromies are
\[
x^2y \text{, } xy^2 \text{, } \frac{y}{x} \text{, } \frac{1}{x^2y} \text{, } \frac{1}{xy^2} \text{, and } \frac{x}{y}.
\]
We deduce that if $HF^* \neq 0$ then these six numbers must coincide, but taking $(x,y)=(-1,-1)$ gives a counter-example (this has $HF^*\neq 0$ since $(-1,-1)$ is a critical point of $W$).  The bad bubbled configurations comprise an index $2$ disc attached to the dual divisor, which is a rational curve of Chern number $1$.
\end{rmk}


\section{The Oh spectral sequence and $\symp(X, L)$}\label{secOhSS}

We now study the Oh spectral sequence for (not necessarily homogeneous) monotone Lagrangians, and the action of discrete symmetries in the form of the image of $\symp(X, L)$ in $\pi_0 \mathrm{Diff}(X, L)$.


\subsection{The spectral sequence}
\label{SpecSeq}

Fix a monotone Lagrangian brane $L^\flat=(L, s, \mathscr{F}) \subset X$ over the ring $R$, and $\diff$-regular auxiliary data $\mathscr{D} = (f, g, J)$.  We abbreviate the complex $C^*(L^\flat; \mathscr{D}; \Lambda)$ to $C$, and assume that $\mathscr{F}$ has rank $1$.  This allows us to work with \eqref{OldComplex} rather than \eqref{eqPearlCx}, and we denote $HF^*(L^\flat, L^\flat; \Lambda)$ simply by $HF^*$ as usual.  In this subsection we explain the construction of the spectral sequence $\ssE{1} \cong H^*(L; \Lambda) \implies HF^*$ and its basic properties in some detail, as we will make heavy use of it.  It originates in \cite{OhSS} but the pearl complex approach is taken from \cite{BCQS}.

First note that the grading on the chain complex $C$ can be refined to a bigrading $C^{*, *}$, by powers of $T$ and by Morse index.  Explicitly, for an integer $p$ and a critical point $x$ of index $|x|=q$ we say that the term $T^px$ (of degree $N_Lp+q$) has \emph{polynomial degree} $p$ and \emph{Morse degree} $q$, and hence lies in $C^{p, q}$.  The differential $\diff$ decomposes as $\pd_0 + \pd_1 + \pd_2 + \dots$, where each $\pd_j$ is of bidegree $(j, 1-N_Lj)$ (so $\pd_j = 0$ for $j > (n+1)/N_L$), and we obtain a decreasing filtration
\[
\cdots \supset F^{-1}C \supset F^{0}C \supset F^1C \supset \cdots
\]
of $C$ by subcomplexes $F^pC=C^{\geq p, *}$ comprising elements of polynomial degree at least $p$.  This is a filtration of $R$- or $R[T]$-modules, but not of $\Lambda$-modules.  The standard construction for a filtered complex, as described in \cite[Section A.3.13]{Eis} for example, then gives the desired spectral sequence.

Explicitly, the spectral sequence is a sequence $(\ssE{r}, \diff_r)_{r=1}^\infty$ of differential $R$-modules (pages) along with an identification of the cohomology $H(\ssE{r}, \diff_r)$ of the $r$th page with the $(r+1)$th page $\ssE{r+1}$.  There is a concrete description of $\ssE{r}$, given by
\begin{equation}
\label{eqSSPage}
\ssE{r} = \bigoplus_{p \in \Z} \frac{\{z \in F^pC : \diff z \in F^{p+r}C\} + F^{p+1}C}{\lb F^pC \cap \diff F^{p-r+1}C \rb + F^{p+1}C},
\end{equation}
and $\diff_r$ is the differential mapping the $p$th summand (`column') to the $(p+r)$th by
\[
x+y \in \{z \in F^pC : \diff z \in F^{p+r}C\} + F^{p+1}C \mapsto \diff x.
\]
In particular, $\ssE{1}$ is the homology of the associated graded complex $\gr C = \oplus_p F^pC/F^{p+1}C$ coming from our filtration.  Since $\pd_0$ is the Morse differential for $(f, g)$ on $L$, this complex is simply the Morse complex over the ring $\gr \Lambda$, which is canonically identified with $\Lambda$ itself.  In other words,
\[
\ssE{1} \cong H^*(L; \Lambda) \cong H^*(L; R) \otimes_R \Lambda \cong \bigoplus_{p \in \Z} T^p \cdot H^*(L; R).
\]

Since $\pd_j = 0$ for $j\gg 0$ the spectral sequence degenerates after finitely many steps, meaning that $\diff_r$ vanishes for $r$ sufficiently large and hence that the pages stabilise to a limit $\ssE{\infty}=\ssE{\gg 0}$.  There is a filtration on $HF^* = H(C)$, given by the images of the $H(F^pC)$, which we denote by $F^p HF^*$.  The limit page $\ssE{\infty}$ is equal to $\gr HF^*$: the associated graded module of this filtration.

The pages of the spectral sequence naturally inherit a $\Lambda$-module structure from the filtration, with multiplication by $T^j$ acting from the $p$th column of \eqref{eqSSPage} to the $(p+j)$th.  On $\ssE{1}$ this coincides with the $\Lambda$-module structure on $H^*(L; \Lambda)$.  The differentials $\diff_r$ are all manifestly $\Lambda$-linear.  Similarly, $\gr HF^*$ has the structure of a $\Lambda$-module, with $T^j$ shifting up $j$ levels of the filtration, and the isomorphism with $\ssE{\infty}$ is $\Lambda$-linear.

\begin{rmk}
This construction is canonical in the following sense.  Suppose we built the spectral sequence $(\ssE{r}', \diff_r')_{r=1}^\infty$ from different auxiliary data $\mathscr{D}'$.  The comparison map $c : C \rightarrow C'$ respects the filtrations so induces pagewise chain maps $\ssE{r} \rightarrow \ssE{r}'$ between the spectral sequences.  Since $c$ agrees with the classical Morse comparison map on the associated graded complex, the map between the first pages corresponds to the natural identification $\ssE{1} = H^*(L; R) \otimes \Lambda = \ssE{1}'$, and the maps between later pages are all induced by this, and so are all quasi-isomorphisms.
\end{rmk}

The product on $C$ respects the filtration so gives a multiplicative structure on our spectral sequence.  In other words, there is a product $*_r$ on each $\ssE{r}$ (for $r=1, 2 \dots, \infty$), satisfying a Leibniz rule with respect to $\diff_r$, and such that $*_{r+1}$ is the product induced on $\ssE{r+1}=H(\ssE{r})$ by $*_r$.  Note that $*_1$ is precisely the (Morse-theoretic) classical cup product $\smile$, whilst $*_\infty$ is the product induced on $\ssE{\infty} \cong \gr HF^*$ by the Floer product.  Since the chain-level multiplication is $\Lambda$-bilinear, all of the $*_r$ also have this property.

A typical application of this spectral sequence, which will be a key step in some of our later arguments, is the following well-known result due to Biran--Cornea \cite[Proposition 6.1.1]{BCQS}.

\begin{defn}\label{WideNarrow}
$L^\flat$ is \emph{wide} (over $R$) if its Floer cohomology $HF^*(L^\flat, L^\flat; \Lambda)$ is isomorphic to its singular cohomology $H^*(L; \Lambda \otimes_R \End \mathscr{F})$ with local coefficients as a graded $\Lambda$-module (not necessarily canonically), and \emph{narrow} (over $R$) if $HF^*(L^\flat, L^\flat; \Lambda)=0$.
\end{defn}

\begin{prop}\label{labBCWide}  Suppose that $R$ is a field, $\mathscr{F}$ has rank $1$, and that $H^*(L; R)$ is generated as an $R$-algebra by elements of degree at most $m$, with $m \leq N_L-1$.  Then $L^\flat$ is either wide or narrow over $R$, and the latter is only possible if we have equality.
\end{prop}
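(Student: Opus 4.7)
The strategy is to combine multiplicativity of the Oh spectral sequence with the explicit bidegree $(r, 1-rN_L)$ of each $\diff_r$.  Since $H^*(L;R)$ is generated as an $R$-algebra by classes $a_i$ of degree $\leq m$, these classes also generate the $\Lambda$-algebra $\ssE{1} = H^*(L;\Lambda)$, and inductively their images generate each $\ssE{r}$ for $r \geq 1$.  For $r \geq 2$, $\diff_r(a_i)$ must land in Morse degree at most $m + 1 - rN_L \leq (N_L-1) + 1 - 2N_L = -N_L < 0$, so it vanishes; the Leibniz rule for $*_r$ then forces $\diff_r \equiv 0$.  Hence the sequence stabilises by $\ssE{2}$, and everything reduces to analysing $\diff_1$.

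\textbf{Paragraph 2.} The same estimate for $r=1$ shows $\diff_1(a_i)$ has Morse degree at most $m + 1 - N_L$, which is strictly negative if $m < N_L - 1$.  In this strict case $\diff_1(a_i) = 0$, so $\diff_1 \equiv 0$ by Leibniz and $\gr HF^* = \ssE{\infty} = \ssE{1} = H^*(L;\Lambda)$.  Since $R$ is a field, $\Lambda$ is a PID and $H^*(L;\Lambda)$ is free of finite rank, so the filtration on $HF^*$ splits to give an isomorphism $HF^* \cong H^*(L;\Lambda)$ of graded $\Lambda$-modules, witnessing wideness.  In particular, narrowness can only occur in the equality case $m = N_L - 1$.

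\textbf{Paragraph 3.} In the equality case, $\diff_1(a_i)$ has Morse degree exactly $0$, and since $L$ is connected the Morse-degree-$0$ part of $\ssE{1}$ is $\Lambda \cdot 1_L$; thus for each top-degree generator we may write $\diff_1(a_i) = \lambda_i T \cdot 1_L$ with $\lambda_i \in R$ (lower-degree generators still map to zero as above).  If all $\lambda_i$ vanish, Leibniz again gives $\diff_1 \equiv 0$ and $L^\flat$ is wide.  Otherwise some $\lambda_i \in R^\times$ (using that $R$ is a field), so $T \cdot 1_L$ is a coboundary; since multiplication by $T$ is invertible on each spectral sequence page, $1_L$ itself vanishes on $\ssE{2}$, whence the unital ring $(\ssE{2}, *_2)$ collapses to $0$, and $HF^* = \ssE{\infty} = 0$.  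The decisive subtlety---and the main obstacle---lies precisely here: upgrading the vanishing of a single class $\lambda_i T \cdot 1_L$ to the vanishing of the unit genuinely requires $R$ to be a field, which is what enforces the wide/narrow dichotomy and rules out intermediate behaviour.
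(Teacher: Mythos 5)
Your argument is correct and follows essentially the same route as the paper's proof: bidegree considerations kill $\diff_r$ on the generators, the Leibniz rule propagates this to all of $\ssE{r}$, and in the equality case a nonzero $\diff_1$ forces the unit of $\ssE{2}$ to vanish (equivalently, every cocycle to be exact), giving narrowness. The only cosmetic difference is that the paper deduces the graded $\Lambda$-module isomorphism $HF^*\cong H^*(L;\Lambda)$ by counting $R$-dimensions degreewise and extending $T$-equivariantly rather than by invoking a splitting of the filtration, which is worth phrasing carefully since the filtration is one of $R[T]$-modules rather than $\Lambda$-modules.
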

\begin{proof}[Sketch proof]
If $m < N_L-1$ then all differentials vanish on $H^{\leq m}(L; R)$ for degree reasons, and hence on the whole of $H^*(L; R)$ by the Leibniz rule, so $L^\flat$ is wide.  The same argument applies when $m = N_L-1$ except that on the first page the differential
\[
\diff_1 : H^m(L; R) \rightarrow T \cdot H^0(L; R)
\]
may be non-zero.  In this case, $\diff_1$ hits the unit (as $R$ is a field), so $\ssE{2}=0$ and $L^\flat$ is narrow.
\end{proof}


\subsection{The action of $\symp(X, L)$}\label{sscMoCo}

In this subsection we prove \cref{mthmSS}.  Recall that $H_2^D$ denotes the image of $\pi_2(X, L)$ in $H_2(X, L; \Z)$, i.e.~the group of homology classes of topological discs on $L$.  The pearl complex can be defined over this ring by weighting counts of pearly trajectories by $T^A$ rather than $T^{\mu(A)/N_L}$, and we refer to this as the \emph{enriched} pearl complex.  This is bigraded by declaring that $T^Ax$ has bidegree $(\mu(A)/N_L, |x|)$, and the construction of the spectral sequence from the previous subsection all carries over to give
\[
\ssE{1} \cong H^*(L;R) \otimes_R R[H_2^D] \implies HF^*(L^\flat, L^\flat;R[H_2^D]).
\]
We shall show that $\symp(X,L)$ can be made to act on the $\ssE{1}$ page in a way that induces an action on the whole spectral sequence.  The precise statement is \cref{labSSrep}.  For this we must restrict to the case of rank $1$ local systems, but in practice this covers many situations of interest.

With this assumption, recall from \cref{sscLocSys} that each trajectory $\gamma$ from $x$ to $y$ contributes ($T^A$ times) $y \otimes m(\pd A)$ to $\diff x$, where $m$ encodes the monodromy and as usual $A \in H_2^D$ is the total homology class of the discs in $\gamma$.  The claimed action on $\ssE{1}$ is
\begin{equation}
\label{eqSSrep}
\phi \cdot (a \otimes T^A) = (-1)^{\ip{\phi_*s-s}{A}} m(\pd \phi^*A) m(\pd A)^{-1} \phi^*(a) \otimes T^{\phi^*A}
\end{equation}
for $\phi \in \symp(X,L)$, $a \in H^*(L; R)$ and $A \in H_2^D$, extended $R$-linearly.  Here $s$ denotes the relative spin structure on $L$, $\phi_*s$ its pushforward (we need not assume $\phi$ preserves the orientation of $L$), and $\phi_*s - s$ the unique element of $H^2(X, L; \Z/2)$ whose action on relative spin structures sends $s$ to $\phi_* s$.  The motivation for this formula will become apparent in the proof of \cref{labSSrep}.

\begin{lem}
This is a right action and respects the product.
\end{lem}
\begin{proof}
For elements $\phi$ and $\psi$ of $\symp(X,L)$ we have
\begin{align*}
\psi \cdot \big(\phi \cdot (a \otimes T^A)\big) &= (-1)^{\ip{\phi_*s-s}{A}} m(\pd \phi^*A) m(\pd A)^{-1} \big(\psi \cdot (\phi^*(a) \otimes  T^{\phi^*A})\big)
\\ &=(-1)^{\ip{\phi_*s-s}{A}+\ip{\psi_*s-s}{\phi^*A}} m(\pd \psi^*\phi^*A) m(\pd A)^{-1} \psi^*\phi^*(a) \otimes  T^{\psi^*\phi^*A}
\end{align*}
for all $a$ and $A$.  We can rewrite $\ip{\psi_*s-s}{\phi^*A}$ as $\ip{\phi_*\psi_*s-\phi_*s}{A}$, so the exponent of $-1$ becomes $\ip{\phi_*\psi_*s-s}{A}$, and we obtain precisely $(\phi\circ\psi)\cdot (a \otimes T^A)$.  This shows we have a right action.

To check compatibility with the product note that it is given by
\[
(a_1 \otimes T^{A_1}) *_1 (a_2 \otimes T^{A_2}) = (a_1 \smile a_2) \otimes T^{A_1+A_2}.
\]
Acting by $\phi$ on each factor on the left-hand side gives
\begin{multline*}
(-1)^{\ip{\phi_*s-s}{A_1+A_2}}m(\pd \phi^*A_1) m(\pd A_1)^{-1}m(\pd \phi^*A_2) m(\pd A_2)^{-1}\big(\phi^*(a_1)\smile \phi^*(a_2)\big) \otimes T^{\phi^*A_1+\phi^*A_2}
 \\ = (-1)^{\ip{\phi_*s-s}{A_1+A_2}}m\big(\pd \phi^*(A_1+A_2)\big) m\big(\pd (A_1+A_2)\big)^{-1}\phi^*(a_1\smile a_2) \otimes T^{\phi^*(A_1+A_2)},
\end{multline*}
which is exactly what we get by acting on the right-hand side.
\end{proof}

This action is manifestly topological, in the sense that it factors through $\pi_0 \mathrm{Diff}(X,L)$.

\begin{thm}[\cref{mthmSS}]
\label{labSSrep}
For any monotone Lagrangian brane $L^\flat \subset X$ with rank $1$ local system over $R$, the differentials in the Oh spectral sequence for $L^\flat$ over $R[H_2^D]$ commute with the $R$-linear $\symp(X,L)$-action on the pages induced by \eqref{eqSSrep} on $\ssE{1}$.
\end{thm}
\begin{proof}
Take $\diff$-regular auxiliary data $\mathscr{D} = (f, g, J)$ so that our enriched pearl complex is
\[
C(L^\flat; \mathscr{D}; R[H_2^D]) = \bigoplus_{x \in \crit (f)} R[H_2^D] \cdot x.
\]
We claim that for each $\phi \in \symp(X, L)$ there exists a filtered endomorphism $\hbar(\phi)$ of this complex which induces \eqref{eqSSrep} on $\ssE{1}$.  Given this we obtain pagewise endomorphisms $\hbar(\phi)_r^*$ which commute with the differentials, and which are themselves induced by the expression \eqref{eqSSrep} on $\ssE{1}$, from which the result follows.

The steps in the construction of $\hbar(\phi)$ are shown in the following diagram of $R$-linear chain maps:
\begin{multline}
\label{MapSequence}
C((L, s, \mathscr{F}); \mathscr{D}; R[H_2^D]) \xrightarrow{\ \delta_s\ } C((L, \phi_*s, \mathscr{F}); \mathscr{D}; R[H_2^D]) \xrightarrow{\ \delta_\mathscr{F}\ } C((L, \phi_*s, \phi_*\mathscr{F}); \mathscr{D}; R[H_2^D])
\\ \xrightarrow{\ h^\phi\ } C((L, s, \mathscr{F}); \phi^*\mathscr{D}; R[H_2^D]) \xrightarrow{\ c\ } C((L, s, \mathscr{F}); \mathscr{D}; R[H_2^D]),
\end{multline}
and we build each map in turn.  In characteristic $2$, without local systems, and with non-enriched coefficients, the last two steps appear in \cite[Section 5.8]{BCQS}.  $\phi^*\mathscr{D}$ denotes the auxilary data
\[
(\phi^*f = f \circ \phi, \phi^*g = g(\diff \phi (\cdot), \diff \phi (\cdot)), \phi^*J = \diff \phi^{-1} \circ J \circ \diff \phi).
\]

\textbf{Step 1:}  We define $\delta_s$ by
\[
T^Ax \mapsto (-1)^{\ip{\phi_*s - s}{A}} T^Ax,
\]
extended $R$-linearly.  By \cref{SpinChangeProp} this is a chain map.

\textbf{Step 2:}  The monodromy factor attached to a pearly trajectory with total disc class $A$ is $m(\pd A)$ for $\mathscr{F}$, and $\phi_*m(\pd A) = m(\pd \phi^* A)$ for the pushforward $\phi_* \mathscr{F}$.  This means that $\delta_{\mathscr{F}}$ defined by
\[
T^Ax \mapsto m(\pd \phi^*A) m(\pd A)^{-1} T^Ax,
\]
and extended $R$-linearly, is a chain map between the second and third complexes in \eqref{MapSequence}.

\textbf{Step 3:}  There is an obvious bijection $\crit(f) \rightarrow \crit(\phi^*f)$ given by $x \mapsto \phi^{-1}(x)$ in $\crit (\phi^*f)$.  Similarly, trajectories contributing to the differential using auxiliary data $\mathscr{D}$ are carried bijectively by $\phi^{-1}$ to trajectories contributing to the differential using $\phi^*\mathscr{D}$, and this preserves $\diff$-regularity (the moduli spaces are identical up to translating everything by the action of $\phi^{-1}$).  To get a chain map we can extend the map on critical points $R$-linearly, but \emph{not $R[H_2^D]$-linearly}.  The problem is that trajectories of class $A$, which are weighted by $T^A$, are carried to trajectories of class $\phi^*A$, which are weighted by $T^{\phi^* A}$.  The solution is to define the map $h^\phi$ to act non-trivially on $R[H_2^D]$, by $T^A \mapsto T^{\phi^* A}$.  In other words, $h^\phi$ is defined by
\[
T^A x \mapsto T^{\phi^* A} \phi^{-1}(x),
\]
extended $R$-linearly.  We assume that the orientations chosen on the ascending and descending manifolds of critical points of $\phi^* f$ are those carried from $f$ by $\phi^{-1}$.

\textbf{Step 4:}
Finally, $c$ is the pearl complex comparison map described in \cref{sscPearlBg}, constructed using a generic Morse cobordism and path of almost complex structures.  This \emph{is} $R[H_2^D]$-linear.  It depends on the choice of auxiliary data for the cobordism, but respects the filtration and induces the classical Morse pullback map on the associated graded complex.

Putting everything together, we define $\hbar(\phi)$ by
\[
\hbar(\phi) : T^Ax \mapsto (-1)^{\ip{\phi_*s-s}{A}} m(\pd \phi^*A) m(\pd A)^{-1} T^{\phi^*A} c(\phi^{-1}(x)),
\]
extended $R$-linearly.  By construction this is a chain map, respects the filtration, and induces \eqref{eqSSrep} on $\ssE{1}$, which is exactly what we need.
\end{proof}

\begin{rmk}\label{rmkPostHoc}
Instead of building rank $1$ local systems into the definition of the complex, one can view them as a post hoc modification of the map reducing $R[H_2^D]$ to $\Lambda$, under which the monomial $T^A$ is sent to $m(\pd A) T^{\mu(A)/N_L}$ instead of $T^{\mu(A)/N_L}$.  We may as well then take the local system to be trivial in \cref{labSSrep}.
\end{rmk}

\begin{rmk}
\Cref{labSSrep} can be extended to diffeomorphisms $\phi$ which are \emph{antisymplectic}, meaning that they pull back the symplectic form $\omega$ to $-\omega$.  In this case we have to replace $\phi^*\mathscr{D} = (\phi^*f, \phi^*g, \phi^*J)$ with $(\phi^*f, \phi^*g, -\phi^*J)$, since $\phi^*J$ is not compatible with $\omega$, and instead of $h^\phi$ simply carrying pearly trajectories by $\phi^{-1}$ it now has to replace each disc $u$ in the trajectory with $z \mapsto \phi^{-1} \circ u(\conj{z})$ in order to preserve holomorphicity.  This leads to two changes in \eqref{eqSSrep}.

Firstly, introducing the complex conjugation flips the signs of the homology classes of the discs, so $T^{\phi^*A}$ becomes $T^{-\phi^*A}$ and $m(\pd \phi^*A)$ becomes $m(\pd \phi^*A)^{-1}$.  And secondly it changes the orientations on the disc moduli spaces, so introduces a factor of $(-1)^{\mu(A)/2}$.  This was computed by Fukaya--Oh--Ohta--Ono in \cite[Theorem 1.3]{FOOOinv}.  Recall that if the Maslov indices of discs are not all even then $L$ is non-orientable and we must be working in characteristic $2$, so this factor can be ignored.  The resulting expression is
\[
a \otimes T^A \mapsto (-1)^{\mu(A)/2 + \ip{\phi_*s-s}{A}} m(\pd \phi^*A)^{-1} m(\pd A)^{-1} \phi^*(a) \otimes T^{-\phi^*A}.\qedhere
\]
\end{rmk}


\subsection{Warm-up example: the Clifford torus}

We now illustrate \cref{labSSrep} with a simple example.  In \cite{ChoCl} Cho computed the Floer cohomology of the monotone Clifford torus
\[
L = \{[z_0: \dots : z_n] : |z_j| = 1 \text{ for all } j\} \subset X = \C\P^n,
\]
equipped with an arbitrary spin structure $s$ and rank $1$ local system $\mathscr{F}$ over $\C$.  By \cref{SpinLocSys} we may assume $s$ is the \emph{standard} spin structure $s$, which for the present purpose is best viewed as the unique $\mathrm{Diff}(L)$-invariant spin structure, and summarise Cho's result as follows.

\begin{prop}
$L^\flat$ is wide if the monodromies of $\mathscr{F}$ around the basic loops
\[
\gamma_j(\theta) \coloneqq \{z_j = e^{i\theta} \text{, } z_k=1 \text{ for }k\neq j\}
\]
are all equal.
\end{prop}

His proof is by explicit calculation of the discs which contribute to the Floer differential.  We now show how to recover the result without any such analysis.

\begin{proof}
Let $\sigma$ denote the symplectomorphism of $\C\P^n$ which cycles the homogeneous coordinates.  This generates a subgroup of $\symp(X,L)$ isomorphic to $\Z/(n+1)$, and we need to show that if $\mathscr{F}$ is $\sigma$-invariant then $L^\flat$ is wide.  Assume then that $\mathscr{F}$ is indeed invariant.

By the proof of \cref{labBCWide} it suffices to show that the differential
\begin{equation}
\label{eqDiffToKill0}
\diff_1 : H^1(L; \C) \rightarrow T \cdot H^0(L; \C)
\end{equation}
in the spectral sequence over $\Lambda=\Lambda_\C$ is zero.  It is therefore enough to show that the image of the corresponding differential
\begin{equation}
\label{eqDiffToKill}
\diff_1 : H^1(L; \C) \rightarrow H^0(L; \C) \otimes_\C \C[H_2^D]_{\text{degree $2$}}
\end{equation}
in the spectral sequence over $\C[H_2^D]$ is annihilated by the reduction map $\pi: \C[H_2^D] \rightarrow \Lambda$ sending $T^A$ to $T^{\mu(A)/N_L}$.  Since both $s$ and $\mathscr{F}$ are invariant, \cref{labSSrep} tells us that \eqref{eqDiffToKill} is equivariant with respect to the obvious actions of $\sigma$ on $H^1(L; \C)$ and $\C[H_2^D]_{\text{degree $2$}}$.  The symmetrisation operator $\overline{\sigma} \coloneqq 1+\sigma+\dots+\sigma^n$ acts as zero on the former, so we will be done if the kernel of $\pi$ contains the kernel of $\overline{\sigma}$ on the latter.

Suppose then that $z = \sum_{A:\mu(A)=2} z_A T^A \in \C[H_2^D]_{\text{degree $2$}}$ is annihilated by $\overline{\sigma}$.  We need to show $\pi(z)=0$.  Since $\overline{\sigma}(z)=0$ we have
\[
\sum_A (z_A+z_{\sigma_*A}+\cdots+z_{\sigma^n_*A})T^A = 0
\]
so $z_A+z_{\sigma_*A}+\cdots+z_{\sigma^n_*A} = 0$ for each $A$.  We therefore have
\[
\pi(z) = \sum_A z_A T^{2/N_L} = \frac{1}{n+1} \sum_A (z_A+z_{\sigma_*A}+\cdots+z_{\sigma^n_*A})T^{2/N_L} = 0,
\]
giving the result.
\end{proof}

More generally, this argument shows the following.

\begin{prop}
\label{MonodromyGroup}
Suppose $L^\flat$ is a monotone Lagrangian torus equipped with the standard (meaning $\mathrm{Diff}(L)$-invariant) spin structure and a rank $1$ local system over a field $R$.  If $\phi_1, \dots, \phi_k$ are elements of $\symp(X, L)$ such that each $\phi_j$ preserves $\mathscr{F}$, and $\sum_j r_j\phi_j^*$ annihilates $H^1(L; R)$ for some $r_j$ in $R$ with non-zero sum, then $L^\flat$ is wide.\hfill$\qed$
\end{prop}

\begin{rmk}
If $L$ is narrow---for example if it's displaceable---then \cref{MonodromyGroup} gives restrictions on the group of linear automorphisms of $H^1(L; \Z)$ obtained as pullbacks by elements of $\symp(X, L)$.  The corresponding group for Hamiltonian diffeomorphisms has been studied by Mei-Lin Yau \cite{MLYau09,MLYau12} and Hu--Lalonde--Leclercq \cite{HuLalondeLeclercq} under the name \emph{Hamiltonian monodromy group}.
\end{rmk}

\Cref{MonodromyGroup} can be proved more directly as follows.  Let $D_1 \in H_1(L; R)$ denote the sum of the boundaries of the index $2$ discs through a generic point of $L$, weighted by the monodromy of $\mathscr{F}$.  If the $\phi_j$ and $r_j$ satisfy the hyoptheses of \cref{MonodromyGroup} then each $(\phi_j)_*$ preserves $D_1$ and the sum $\sum_j r_j (\phi_j)_*$ vanishes.  We thus have
\[
0 = \sum_j r_j (\phi_j)_* (D_1) = \sum_j r_j D_1,
\]
so $D_1 = 0$.  On the other hand, the differential \eqref{eqDiffToKill0} is given by pairing with $D_1$, so the differential vanishes and $L^\flat$ is wide.  The advantage of \cref{labSSrep} is that it applies even when there is no such concrete interpretation of the differentials, as is the case in main application below.


\section{The main family of examples}\label{secApp}

In this section we construct and study a family of monotone Lagrangians in products of projective spaces which provide an interesting testing ground for the tools developed in the preceding sections.


\subsection{Constructing the family}\label{sscMyPSU}

First we define the Lagrangians, so fix an integer $N \geq 3$ and consider $\C^{N-1}$ equipped with the standard symplectic form $\omega_0$.  The diagonal $S^1$-action is Hamiltonian with moment map $(\norm{z}^2-1)/2$, and reduction at the zero level set defines a symplectic structure on $\C\P^{N-2}$ which we will use throughout this section.  This corresponds to the Fubini--Study form normalised so that the area of a projective line is $\pi$.

The standard action of $\SU(N-1)$ on $\C^{N-1}$ descends to a $\PSU(N-1)$-action on $\C\P^{N-2}$, where it is Hamiltonian with moment map $\mu_{\C\P^{N-2}} : \C\P^{N-2} \rightarrow \mathfrak{psu}(N-1)^* = \mathfrak{su}(N-1)^*$ given by
\[
\ip{\mu_{\C\P^{N-2}}([z])}{A} = -\frac{i}{2}\frac{z^\dag A z}{z^\dag z}
\]
for all $z \in \C^{N-1}$, representing a point $[z]$ in $\C\P^{N-2}$, and all $A \in \mathfrak{su}(N-1)$.  Now take $X$ to be the product $(\C\P^{N-2})^N$, carrying the diagonal action of $K = \PSU(N-1)$, with moment map $\mu$ defined by
\begin{equation}
\label{eqMomMap}
\ip{\mu([z_1], \dots, [z_N])}{A} = -\frac{i}{2} \sum_{j=1}^N \frac{z_j^\dag A z_j}{z_j^\dag z_j}
\end{equation}
for all $([z_1], \dots, [z_N]) \in X$ and all $A$ as above.  Using the map $\mathfrak{u}(N-1) \rightarrow \mathfrak{su}(N-1)^*$ induced by the inner product $\ip{A}{B} = \operatorname{Tr} (A^\dag B)$ on $\mathfrak{u}(N-1)$, we can express $\mu$ as the map
\[
Z \in (\C\P^{N-2})^N \mapsto \frac{i}{2} Z Z^\dag \in \mathfrak{u}(N-1),
\]
where $Z$ is an $(N-1) \times N$ matrix whose columns are the homogeneous coordinates of the components of the corresponding point in $(\C\P^{N-2})^N$, scaled to have norm $1$ (so we can ignore the denominators in \eqref{eqMomMap}).  The phases of the columns of $Z$ are undetermined but do not affect the quantity $ZZ^\dag$.

Consider the vectors $v_1, \dots, v_N$ in $\C^{N-1}$ defined by
\[
v_j = (\zeta^j, \zeta^{2j}, \dots, \zeta^{(N-1)j}),
\]
where $\zeta = e^{2\pi i/N}$ is a primitive $N$th root of unity, and let $x$ be the point $([v_1], \dots, [v_N])$ in $X$.

\begin{prop}
The point $x$ lies in the zero level set of $\mu$ and its $K$ stabiliser is trivial.  Its orbit $L$ is a sharply $K$-homogeneous Lagrangian.
\end{prop}
\begin{proof}
One choice of matrix $Z$ representing $x$ has components $Z_{jk} = \zeta^{jk}/\sqrt{N-1}$, so
\[
(ZZ^\dag)_{jk} = \frac{1}{N-1} \sum_{l=1}^N \zeta^{(j-k)l} = \frac{N}{N-1} \delta_{jk}.
\]
Thus $iZZ^\dag/2$ is proportional to $i$ times the identity matrix, which spans the orthogonal to $\mathfrak{su}(N-1)$ inside $\mathfrak{u}(N)$, and hence $\mu(x)=0$.  The $K$-orbit of $x$ is therefore isotropic by \cite[Proposition 1.5]{Ch}.

Now suppose that $M$ is a matrix in $\SU(N-1)$ that stabilises $x$, which is equivalent to each $v_j$ being an eigenvector.  We need to show that $M$ is a scalar.  Our strategy is to show that the $v_j$ are pairwise non-orthogonal, so their eigenvalues must coincide, and that they span $\C^{N-1}$ so the corresponding eigenspace is the whole of $\C^{N-1}$.  We prove the two claims by direct computation: for the former we have for any distinct $j$ and $k$ that
\begin{equation}
\label{vjInnerProduct}
\ip{v_j}{v_k} = \sum_{l=1}^{N-1} \zeta^{(k-j)l} = -1+\sum_{l=1}^N \zeta^{(k-j)l} = -1
\end{equation}
(noting the upper limits on the sums); and for the latter we have for $k=1, \dots, N-1$ that
\[
\sum_{j=1}^N \zeta^{-jk}v_j = N \cdot (\text{$k$th standard basis vector}).
\]

The upshot is that $L$ is an isotropic free $K$-orbit, and we have
\[
\dim_\R L = \dim_\R \PSU(N-1) = N^2-1 = \dim_\C X
\]
so it is in fact Lagrangian, and thus a sharply $K$-homogeneous Lagrangian.
\end{proof}

Let $H_j \in H^2(X; \Z)$ denote the pullback of the hyperplane class under the projection of $X$ to the $j$th $\C\P^{N-2}$ factor.

\begin{lem}
\label{PSUmonotone}
$X$ has minimal Chern number $N-1$ and $L$ is monotone.
\end{lem}
\begin{proof}
We have $c_1(X)=(N-1)(H_1+\dots+H_N)$, which proves the first claim.  We also have $[\omega]=\pi(H_1+\dots+H_N)=\pi c_1(X)/(N-1)$ in $H^2(X; \R)$.  Since $L$ has fundamental group $\Z/(N-1)$, the group $H_1(L; \R)$ vanishes, and the long exact sequence of the pair in real homology then shows that the Maslov index homomorphism is $2(N-1)/\pi$ times area.  Hence $L$ is monotone.
\end{proof}

This means we can equip $L$ with a relative spin structure and local system to give a monotone Lagrangian brane $L^\flat$, to which we can apply the Floer theory outlined in \cref{secFloerBg}.


\subsection{Invariant subvarieties and axial discs}\label{sscCOConstr}

Our next task is to identify a collection of $K$-invariant subvarieties, compute their Poincar\'e duals, and classify their axial discs with a view towards applying \cref{thmCO}.  We will then use these constraints to prove the vanishing of $HF^*$ claimed in \cref{mthmHF}.

For a proper subset $I \subset \{1, \dots, N\}$ of size at least $2$ let $Z_I \subset X$ denote the subvariety
\[
Z_I = \Big\{\lb[z_1], \dots, [z_N]\rb \in X : \lb z_j\rb_{j \in I} \text{ is linearly dependent}\Big\},
\]
of complex codimension $N-|I|$.  This is invariant under the complexification $G=\PSL(n, \C)$ of $K$, and contains
\begin{multline*}
\mathcal{O}_I = \Big\{\lb[z_1], \dots, [z_N]\rb \in X : \lb z_j\rb_{j \in I} \text{ is linearly dependent but all proper subsets are} \\ \text{linearly independent, and } z_1, \dots, z_N \text{ span } \C^{N-1}\Big\}
\end{multline*}
as a dense open $G$-orbit.  By permuting the $\C\P^{N-2}$ factors, it suffices to understand $Z_I$ when $I$ is of the form $\{1, 2, \dots, N-k\}$ for $k=1, \dots, N-2$, and we denote the corresponding $Z_I$ (respectively $\mathcal{O}_I$) by $Z_k$ (respectively $\mathcal{O}_k$).  Recalling that $H_j$ is the hyperplane class on the $j$th factor we compute the following.

\begin{lem}\label{lemPDClass} For each $k = 1, \dots, N-2$ we have
\[
\PD(Z_k) = h_k(H_1, \dots, H_{N-k}),
\]
where $h_k$ denotes the complete homogeneous symmetric polynomial of degree $k$, i.e.
\[
\sum_{\substack{r_1,  \dots, r_{N-k} \geq 0 \\ r_1+\dots+r_{N-k} = k}} H_1^{r_1}\cdots H_{N-k}^{r_{N-k}}.
\]
\end{lem}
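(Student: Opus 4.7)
The plan is to reduce everything to a classical degeneracy-locus computation in intersection theory. First, observe that the condition defining $Z_k$ only involves the first $N-k$ coordinate points, so $Z_k = \pi^{-1}(\widetilde Z_k)$, where $\pi \mc X \rightarrow (\C\P^{N-2})^{N-k}$ is projection onto the first $N-k$ factors and $\widetilde Z_k$ is the corresponding subvariety of $(N-k)$-tuples of linearly dependent points in $\C\P^{N-2}$. Since $\pi^*H_j = H_j$ for $j \leq N-k$, it suffices to compute $\PD(\widetilde Z_k) \in H^{2k}((\C\P^{N-2})^{N-k}; \Z)$.

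Next, I would identify $\widetilde Z_k$ as the corank-one degeneracy locus of a natural bundle map. Writing $L_j$ for the pullback of $\mathcal{O}_{\C\P^{N-2}}(-1)$ under projection to the $j$th factor, and $E = L_1 \oplus \dots \oplus L_{N-k}$, the tautological inclusions $L_j \hookrightarrow \C^{N-1}$ assemble into a bundle map $\phi \mc E \rightarrow F$, where $F$ is the trivial bundle of rank $N-1$. The fibre of $\phi$ over $([z_1], \dots, [z_{N-k}])$ is the natural map $\lspan{z_1} \oplus \dots \oplus \lspan{z_{N-k}} \rightarrow \C^{N-1}$, which fails to be injective exactly when the $z_j$ are linearly dependent. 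Hence $\widetilde Z_k$ is precisely the locus where $\rank \phi < N-k$, of expected codimension $(N-1)-(N-k)+1 = k$.

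The Thom--Porteous formula for corank-one loci then gives
\[
[\widetilde Z_k] = c_k(F-E) = \bigg[\prod_{j=1}^{N-k}\frac{1}{1-H_j}\bigg]_k = \bigg[\prod_{j=1}^{N-k}\sum_{r \geq 0} H_j^r\bigg]_k = h_k(H_1, \dots, H_{N-k}),
\]
where $[\,\cdot\,]_k$ denotes the degree-$k$ component, yielding the claimed formula.

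The only real obstacle is verifying the hypothesis underlying Thom--Porteous, namely that the corank-one locus meets its expected codimension with no excess contribution from deeper strata. I would check this directly: the stratum where $\phi$ has corank at least $2$ is parametrised by first choosing an $(N-k-2)$-dimensional subspace $W \subset \C^{N-1}$ (forming a Grassmannian of dimension $(N-k-2)(k+1)$) and then letting each $[z_j]$ vary in $\P W$ (contributing $(N-k)(N-k-3)$), and a quick dimension count gives this stratum codimension $2(k+1)$, which comfortably exceeds $k$. Alternatively one can sidestep Thom--Porteous entirely by resolving $\widetilde Z_k$ as a $(\C\P^{N-k-2})^{N-k}$-bundle over $\mathrm{Gr}(N-k-1, N-1)$ (parameterising pairs consisting of an $(N-k-1)$-dimensional subspace together with an $(N-k)$-tuple of points lying in it) and pushing forward $1$ via the splitting principle; this is more concrete but longer.
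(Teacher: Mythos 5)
Your argument is correct, but it takes a genuinely different route from the paper. The paper's proof is a bare-hands enumerative one: it expands $\PD(Z_k)$ in the monomial basis, pairs against products of generic linear subspaces $\Pi_j \subset \C\P^{N-2}$, and shows by a genericity argument that the count is $0$ when $r_1+\dots+r_{N-k}<k$ and $1$ when equality holds (with all signs positive because everything carries a complex orientation). You instead package $\widetilde Z_k$ as the corank-one degeneracy locus of $\phi\mc L_1\oplus\dots\oplus L_{N-k}\to \underline{\C^{N-1}}$ and invoke Thom--Porteous, so that the whole answer drops out of $c(F-E)=\prod_j(1-H_j)^{-1}$ in one line; the preliminary reduction $Z_k=\pi^{-1}(\widetilde Z_k)$ and the identification $c_1(L_j)=-H_j$ are both right. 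What your route buys is that the complete homogeneous symmetric polynomial appears for a structural reason rather than by inspection of which monomials survive; what it costs is the need to verify the hypotheses of Thom--Porteous. On that last point, one small calibration: the issue is not really ``excess contribution from deeper strata'' but rather (i) that the degeneracy locus has the expected codimension $k$ and (ii) that the determinantal scheme is generically reduced along it, so that the Thom--Porteous cycle is the reduced fundamental class. Your codimension-$2(k+1)$ count for the corank-$\geq 2$ stratum is exactly the input needed for the cleanest fix, namely your ``alternative'': the Kempf--Laksov resolution by the $(\C\P^{N-k-2})^{N-k}$-bundle over $\mathrm{Gr}(N-k-1,N-1)$ is birational onto $\widetilde Z_k$ precisely because the generic point has corank exactly one (the subspace $W$ is then forced to be the span of the $z_j$), and pushing forward the fundamental class settles (i) and (ii) simultaneously. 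So I would present the resolution not as an optional longer detour but as the one-sentence justification of the Thom--Porteous step; with that, the proof is complete.
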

\begin{proof}  Since $\PD(Z_k)$ lies in $H^{2k}(X; \Z)$, it is a linear combination of monomials $H_1^{r_1} \cdots H_N^{r_N}$ with total degree $r_1+\dots+r_N$ equal to $k$.  The coefficient of such a monomial is obtained by counting intersection points of $Z_k$ with
\[
\prod_{j=1}^N \Pi_j \subset \prod_{j=1}^N \C\P^{N-2} = X,
\]
where $\Pi_j$ is a generic $r_j$-plane in $\C\P^{N-2}$.  All such intersections count positively since all cycles involved carry complex orientations.  We therefore wish to count $N$-tuples of points in $\C\P^{N-2}$ such that the $j$th point is constrained to a generic $r_j$-plane and the first $N-k$ points are linearly dependent, where $\sum r_j = k$.

Suppose first that $r_1+\dots+r_{N-k} < k$.  In this case we claim that generically there are no linear dependencies between the first $N-k$ points, so the coefficient of $H_1^{r_1}\cdots H_N^{r_N}$ is zero.  Equivalently, we claim that generically
\begin{equation}
\label{eqPiDep}
\Pi_j \cap \lb \Pi_1 + \dots + \widehat{\Pi}_j + \dots + \Pi_{N-k} \rb = \emptyset
\end{equation}
for $j=1, \dots, N-k$, where $\hat{}$ denotes omission as usual (elements of this intersection correspond to linear dependencies $a_1z_1+\dots+a_{N-k}z_{N-k}$ between $[z_1]\in \Pi_1, \dots, [z_{N-k}]\in\Pi_{N-k}$ with $a_j \neq 0$).  To prove this, note that the bracketed term on the left-hand side is an $R_j$-plane for some $R_j$, where
\[
R_j \leq r_1+\dots+\hat{r}_j+\dots+r_{N-k}+N-k-2 < N-2-r_j,
\]
so it is disjoint from a generic $r_j$-plane in $\C\P^{N-2}$.

We are left to deal with the case $r_1+\dots+r_{N-k}=k$ and $r_{N-k+1}=\dots=r_N=0$.  In this situation each intersection \eqref{eqPiDep} is non-empty (it is an intersection of planes of complementary dimensions), and generically it consists of a single point.  We conclude that generically there is a single point in $\prod_j \Pi_j$ such that the first $N-k$ entries are linearly dependent (and moreover---although this is irrelevant for us---the dependency involves all $N-k$ of these entries, i.e.~the $a_j$ above are all non-zero).  Thus the coefficient of $H_1^{r_1}\dots H_N^{r_N}$ is $1$.  This gives the claimed expression for $\PD(Z_k)$.
\end{proof}

\Cref{thmCO} expresses $\CO^0(\PD(Z_k))$ in terms of axial discs $u$ of index $2k$ with $u(0)$ in $Z_k$ and $u(1)$ equal to the unique Morse minimum.  Note that by \cref{DiscClassification} any such disc must necessarily satisfy $u(0) \in \mathcal{O}_k$; otherwise it meets the invariant subvariety $Z_k \setminus \mathcal{O}_k$, of complex codimension $>k$, so must have index $>2k$.  The choice of Morse minimum is irrelevant, so we take it to be the point $x=([v_1], \dots, [v_N])$ from \cref{sscMyPSU}.  The next result classifies these discs.

\begin{prop}\label{lemUniqDisc}
For each $k$ in $\{1, \dots, N-2\}$ there is a unique axial disc $u$ of index $2k$ with $u(0) \in \mathcal{O}_k$ and $u(1)=x$.
\end{prop}
\begin{proof}
Axial discs $u$ with $u(1)=x$ are precisely maps of the form $z \mapsto e^{-i\xi \log z}x$ where $\xi$ is an element of $\mathfrak{su}(N-1)$ satisfying $e^{2\pi\xi}x=x$.  Any $\xi \in \mathfrak{su}(N-1)$ is diagonalisable, with orthogonal eigenspaces and imaginary eigenvalues (which sum to zero when counted with multiplicity, since $\xi$ is trace-free), and our plan is to use the conditions $e^{2\pi\xi}x=x$, $u(0) \in \mathcal{O}_k$, and $\mu(u)=2k$ to determine what the eigenspaces and eigenvalues must be.

Since $x$ has trivial stabiliser in $\PSU(N-1)$, the condition $e^{2\pi\xi}x=x$ forces $e^{2\pi\xi}$ to be the identity in $\PSU(N-1)$, i.e.~of the form $e^{2\pi i \delta/(N-1)}$ times the identity matrix for some $\delta \in \Z$.  Each eigenvalue $i\lambda$ of $\xi$ must therefore satisfy $\lambda = \delta/(N-1) \mod \Z$, so we write the distinct eigenvalues as
\begin{equation}
\label{eqEvalForm}
i\Big(\lambda_j + \frac{\delta}{N-1}\Big)
\end{equation}
for $\lambda_j \in \Z$.  Reordering if necessary, we may assume that $\lambda_1 < \dots < \lambda_m$, and by adding multiples of $\pm(N-1)$ to $\delta$ and $\mp 1$ to each $\lambda_j$ we may assume that $\delta$ lies in $\{-k, \dots, N-2-k\}$.  Letting $V_1, \dots, V_m$ be the corresponding eigenspaces, the trace-free condition gives
\begin{equation}
\label{eqTraceFree}
\delta + \sum_{j=1}^m \lambda_j \dim V_j = 0.
\end{equation}

Now let $\pi_j$ be orthogonal projection onto $V_j$.  For any non-zero vector $v$ in $\C^{N-1}$ we have
\[
[e^{-i\xi\log z}v] = \Big[\sum_{j=1}^m z^{\lambda_j}\pi_j(v)\Big],
\]
so as $z \rightarrow 0$ the left-hand side converges to $[\pi_{j_\mathrm{min}(v)}(v)]$, where $j_\mathrm{min}(v)$ is defined to be the minimal $j$ such that $\pi_j(v)$ is non-zero.  Using this we can write $u(0)$ as $([v^\pi_1], \dots, [v^\pi_N)])$, where $v^\pi_j = \pi_{j_\mathrm{min}(v_j)}(v_j)$.  The condition $u(0) \in \mathcal{O}_k$ then tells us that $v^\pi_1, \dots, v^\pi_{N-k}$ are linearly dependent but there are no other dependencies between $v^\pi_1, \dots, v^\pi_N$.

Next recall from the proof of \cref{PSUmonotone} that the Maslov index of a disc is $2(N-1)/\pi$ times its area.  The computation in the proof of \cref{labHamPatch} expresses the area of our axial disc in terms of the moment map and we obtain
\[
\mu(u) = \frac{2(N-1)}{\pi} \cdot -2\pi \ip{\mu(u(0))}{\xi},
\]
where the $\mu$ on the left-hand side is the Maslov index and that that on the right is the moment map.  This reduces to
\[
\mu(u) = 2i(N-1)\sum_{j=1}^N \frac{(v^\pi_j)^\dag\xi v^\pi_j}{(v^\pi_j)^\dag v^\pi_j},
\]
and the $j$th summand on the right-hand side is exactly the eigenvalue $i(\lambda_{j_\mathrm{min}(v_j)}+\delta/(N-1))$.  The condition $\mu(u)=2k$ then gives
\begin{equation}
\label{eqIndex}
k = -N\delta - (N-1)\sum_{j=1}^N \lambda_{j_\mathrm{min}(v_j)}.
\end{equation}

We have thus reduced to the following situation: the eigenvalues are of the form \eqref{eqEvalForm} and satisfy the constraints \eqref{eqTraceFree} and \eqref{eqIndex}, and the unique linear dependency (up to scaling) between the $v^\pi_1, \dots, v^\pi_N$ is between the first $N-k$.

We need to analyse the latter more closely, so for each $r$ let
\[
J_r = \{j : j_\mathrm{min} (v_j) = r\}
\]
index those $v_j$ with $j_\mathrm{min}=r$.  Linear dependencies between the $v^\pi_j$ decompose over the sets $J_r$, in the sense that
\[
\sum_{j=1}^N a_j v^\pi_j = 0 \quad \text{if and only if} \quad  \sum_{j \in J_r} a_j v^\pi_j = 0 \quad \text{for all } r,
\]
so there must be a unique value of $r$---call it $r_0$---such that $(v^\pi_j)_{j\in J_r}$ is linearly dependent, and this $J_{r_0}$ must contain $1, \dots, N-k$.

We claim that $r_0=1$.  To see this, note that we can express $J_1$ as
\[
J_1 = \{j : v_j \notin V_2 \oplus \dots \oplus V_m\}.
\]
Any proper subset of the $v_j$ is linearly independent, so in particular this holds for the set of $v_j$ contained in $V_2 \oplus \dots \oplus V_m$, and we deduce that
\[
|\{j : v_j \in V_2 \oplus \dots \oplus V_m\}| \leq \dim V_2 + \dots + \dim V_m
\]
and hence
\[
|J_1| \geq N - (\dim V_2 + \dots + \dim V_m) = \dim V_1 + 1.
\]
The projection of the $(v_j)_{j \in J_1}$ to $V_1$ therefore produces a linear dependence between the $(v^\pi_j)_{j \in J_1}$, so we must have $r_0=1$ as claimed.

Since there are no other linear dependencies, we must in fact have $|J_1|=\dim V_1 +1$ and $|J_r| \leq \dim V_r$ for all $r>1$.  Summing over $r$ gives
\[
N = \sum_{r=1}^m |J_r| \leq 1 + \sum_{r=1}^m \dim V_r = N,
\]
so the only possibility is that $|J_r| = \dim V_r$ for $r=2, \dots, m$.  This forces us to have $V_m = \lspan{v_j}_{j \in J_m}$, and more generally
\[
V_r \oplus \dots \oplus V_m = \lspan{v_j}_{j \in J_r \cup \dots \cup J_m},
\]
so the kernel of the orthogonal projection of $\lspan{v_j}_{j \in J_1}$ onto $V_1$ is
\begin{equation}
\label{eqKernel}
\lspan{v_j}_{j \in J_1} \cap \lspan{v_j}_{j \in J_2 \cup \dots \cup J_m}.
\end{equation}
We want this (one-dimensional) kernel to lie in $\lspan{v_1, \dots, v_{N-k}}$, to produce the linear dependence between $v^\pi_1, \dots, v^\pi_{N-k}$.  But since the unique linear dependence between the $v_j$ is $v_1 + \dots + v_N = 0$, we can compute \eqref{eqKernel} exactly: it is the span of $\sum_{j \in J_1} v_j$.  We must therefore have $J_1 = \{1, \dots, N-k\}$.

The upshot of the previous three paragraphs is that $J_1 = \{1, \dots, N-k\}$, and each $V_r$ is given by the orthogonal complement of $\lspan{v_j}_{j \in J_{r+1} \cup \dots \cup J_m}$ in $\lspan{v_j}_{j \in J_r \cup \dots \cup J_m}$.  We now need to combine this information with \eqref{eqEvalForm}, \eqref{eqTraceFree}, and \eqref{eqIndex}, to determine $\delta$, the values $\lambda_1, \dots, \lambda_m$, and the sets $J_2, \dots, J_m$.

Well, we can rewrite \eqref{eqIndex} as
\[
k = -N\delta - (N-1)\sum_{r=1}^m |J_r|\lambda_r = -N\delta - (N-1) \lambda_1 - (N-1)\sum_{r=1}^m \lambda_r\dim V_r,
\]
and eliminate the final term using \eqref{eqTraceFree} to give
\[
k + \delta = -(N-1)\lambda_1.
\]
Since the left-hand side is in $\{0, \dots, N-2\}$ but the right-hand side is divisible by $N-1$, we must have $\delta = -k$ and $\lambda_1=0$.  By induction we then have $\lambda_r \geq r-1$ for each $r$, so returning to \eqref{eqTraceFree} we obtain
\[
k = \sum_{r=2}^m \lambda_r \dim V_r \geq \sum_{r=2} \dim V_r = k.
\]
We thus have equality in the middle inequality, which means that $\lambda_2=1$ and $m=2$ (there's no $V_3, V_4, \dots$).  This completely pins down $\delta$, the $\lambda_j$, and the $J_r$, and hence $\xi$ itself: $\xi$ acts as $i(1-k/(N-1))$ on $\lspan{v_{N-k+1}, \dots, v_N}$ and $-ik/(N-1)$ on the orthogonal complement.  Conversely, this $\xi$ has the required properties: it satisfies $e^{2\pi\xi}x =x$, and generates an axial disc $u$ with index $2k$ and $u(0) \in \mathcal{O}_k$.
\end{proof}


\subsection{Constraints from the closed--open map}
\label{sscCOconstraints}

We are almost ready to apply \cref{thmCO}, but first we need to understand the signs appearing in it, which depend on the choice of relative spin structure $s$ on $L$.  Recall from \cref{defRelSpin} that the set of relative spin structures forms a torsor for $H^2(X, L; \Z/2)$, so we can describe $s$ as the translate of the standard spin structure by some class $\eps \in H^2(X, L; \Z/2)$.  To compute the effect of $\eps$ we need to know the classes of the discs in \cref{lemUniqDisc}, and before doing this we need a convenient basis for $H_2(X, L; \Z)$.

To obtain such a basis note that the compactification divisor $Y \subset X$ is the union of the subvarieties
\begin{equation}
\label{eqDivCpts}
Z_{\{1, \dots, \widehat{j}, \dots, N\}}
\end{equation}
By \cref{lemUniqDisc} each of these subvarieties meets a unique axial index $2$ disc.  Let $A_j \in H_2(X, L; \Z)$ be the homology class of the disc meeting the $j$th.

\begin{lem}\label{DiscBasis}  The $A_j$ freely generate $H_2(X, L)$.
\end{lem}
\begin{proof}
All homology groups are over $\Z$.  The Poincar\'e duals of the subvarieties in \eqref{eqDivCpts} define classes in $H^2(X \setminus L)$, and by pairing with these we obtain a map $\theta : H_2(X, L) \rightarrow \Z^N$.  This gives an isomorphism
\begin{equation}
\label{eqKerTheta}
H_2(X, L) \cong \Z^N \oplus \ker \theta,
\end{equation}
with the $A_j$ forming a free basis for the $\Z^N$ summand, and we are done if we can show that $\ker \theta = 0$.

To prove that this is indeed the case, consider the normalised Maslov index homomorphism $\nu = \mu/2 : H_2(X, L) \rightarrow \Z$, which yields an isomorphism
\begin{equation}
\label{HXLsplit}
H_2(X, L) \cong \Z \oplus \ker \nu.
\end{equation}
Since the minimal Chern number is $N-1$, the image of $H_2(X)$ in $H_2(X, L)$ is annihilated by the mod $N-1$ reduction $\overline{\nu}$ of $\nu$.  This means that $\overline{\nu}$ factors as the boundary map $\pd : H_2(X, L) \rightarrow H_1(L)$ followed by a map $\hat{\nu} : H_1(L) \rightarrow \Z/(N-1)$, and because $\nu$ is surjective $\hat{\nu}$ is also surjective and is thus an isomorphism.  In particular, $\hat{\nu}$ is injective, so $\ker \overline{\nu}$ coincides with $\ker \pd$, and hence $\ker \nu$ is contained in the image of $H_2(X)$.  We deduce that $\ker \nu$ is the image of
\begin{equation}
\label{kerc1}
\ker \lb \ip{c_1(X)}{\cdot} : H_2(X) \rightarrow \Z \rb \subset H_2(X)
\end{equation}
in $H_2(X, L)$.  Since \eqref{kerc1} is isomorphic to $\Z^{N-1}$, $\ker \nu$ must be a quotient of $\Z^{N-1}$, so \eqref{HXLsplit} tells us that $H_2(X, L)$ is a quotient of $\Z^N$.  Then \eqref{eqKerTheta} forces $\ker \theta$ to vanish, proving the result.
\end{proof}

We can now compute the classes of the discs appearing in \cref{lemUniqDisc}.

\begin{lem}
\label{DiscClass}
The unique axial disc $u$ of index $2k$, satisfying $u(0) \in \mathcal{O}_k$ and $u(1)=x$, is in class
\[
A_{N-k+1} + \dots + A_N.
\]
\end{lem}
\begin{proof}
We know from \cref{DiscBasis} and its proof that we have
\[
[u] = \sum_{j=1}^N \lambda_j A_j,
\]
where $\lambda_j$ is the intersection number of $u$ with \eqref{eqDivCpts}, and since everything is complex all intersections count positively.  The subvariety $Z_k$ lies in the intersection
\[
\bigcap_{j=N-k+1}^N Z_{\{1, \dots, \widehat{j}, \dots, N\}}
\]
so the fact that $u$ meets $Z_k$ forces $\lambda_{N-k+1}, \dots, \lambda_N$ to be strictly positive.  Since all other $\lambda_j$ are non-negative (by positivity of intersections again), we obtain
\[
\sum_j \lambda_j \geq k
\]
with equality if and only if $[u]$ is as claimed.  But the left-hand side is exactly half the Maslov index of $u$ (since each $A_j$ has index $2$), so we do have equality.
\end{proof}

The final thing we need to do before applying \cref{thmCO} is to introduce some notation.  Let $\eps_j \in \{\pm1\}$ denote the sign $(-1)^{\ip{\eps}{A_j}}$, recalling that $\eps \in H^2(X, L; \Z/2)$ is the class describing the relative spin structure.  From the proof of \cref{DiscBasis}, the boundary map $H_2(X, L; \Z) \rightarrow H_1(L; \Z) \cong \Z/(N-1)$ is the mod $N-1$ reduction of $\mu/2$ so in particular the classes $A_j$ all have the same boundary.  Let $M$ denote the fibre endomorphism of the local system $\mathscr{F}$ which describes the monodromy around this loop.  We do not need to specify the base point since $L$ has abelian fundamental group.  We can now give the main result of this subsection.

\begin{thm}
\label{CalcCO}
For a proper subset $I \subset \{1, \dots, N\}$ of size at least $2$ we have
\begin{equation}
\label{eqCalcCO}
\CO^0(h_{N-|I|}\big((H_j)_{j\in I}\big) = \Big(\prod_{j \notin I} \eps_j\Big) T^{N-|I|}  \cdot [M^{N-|I|} \otimes 1_L] \in HF^*(L^\flat, L^\flat; \Lambda).
\end{equation}
\end{thm}
\begin{proof}
By permuting the $\C\P^{N-2}$ factors we may assume that $I = \{1, \dots, N-k\}$, so the left-hand side becomes $\CO^0(h_k(H_1, \dots, H_{N-k}))$.  Now simply apply \cref{thmCO} to $Z_k$ and plug in \cref{lemPDClass} and \cref{lemUniqDisc}.  The sign comes from \cref{SpinChangeProp} combined with \cref{DiscClass}
\end{proof}


\subsection{Analysing the constraints}

In this subsection we manipulate the equalities from \cref{CalcCO} to prove the $HF^*$ vanishing results of \cref{mthmHF}.  To this end, let $\eta_j$ denote $\CO^0(H_j)$ in $HF^*(L^\flat, L^\flat; R)$, let $\bm{\eta}$ denote the $N$-tuple of all $\eta_j$, and for a subset $I \subset \{1, \dots, N\}$ let $\bm{\eta}_I$ denote the tuple $(\eta_j)_{j \in I}$, so that
\[
h_k(\bm{\eta}_I) = \sum_{\substack{j_1, \dots, j_k \in I \\ j_1 \leq \dots \leq j_k}} \eta_{j_1} \dots \eta_{j_k}
\]
for each $k$.  Note that the $\eta_j$ are $\CO^0$-images of the $H_j$, which commute, and hence the $\eta_j$ themselves commute.  As a simple corollary of \cref{CalcCO} we obtain

\begin{prop}\label{COconstr1} For any non-empty subset $I \subset \{1, \dots, N\}$ we have
\[
h_{N-|I|} (\bm{\eta}_I) = \Big(\prod_{j \notin I} \eps_j\Big) T^{N-|I|}  \cdot [M^{N-|I|} \otimes 1_L]
\]
in $HF^*(L^\flat, L^\flat; R)$.  The classes $\eta_j + \eps_jT\cdot [M\otimes 1_L]$ are all equal to $\eta_1 + \dots + \eta_N$.  We denote this common value by $\conj{\eta}$.
\end{prop}
\begin{proof}
The polynomial appearing on the left-hand side of \eqref{eqCalcCO}, expressing the cohomology class Poincar\'e dual to $Z_I$, was computed using the classical cup product on $H^*(X; R)$, but since its degree is less than the minimal Chern number we can equally interpret it in terms of the quantum product on $QH^*(X; \Lambda)$.  Using the fact that $\CO^0$ is a ring map with respect to the quantum product, \cref{CalcCO} immediately gives the claimed equality when $2 \leq |I| \leq N-1$.  Taking $|I|=N-1$ we obtain the statement about the $\eta_j + \eps_jT\cdot[M\otimes 1_L]$.

We are left to deal with the cases $|I|=N$ and $|I|=1$.  The former is a consequence of unitality of $\CO^0$, so restrict attention to the latter, say $I=\{j\}$.  We then have
\[
h_{N-|I|}(\bm{\eta}_I) = \eta_j^{N-1} = \CO^0(H_j^{N-1}),
\]
and we can compute $H_j^{N-1}$ using the relations in $QH^*(X; \Lambda)$.  If the background class (image of $\eps$ in $H^2(X; \Z/2)$) vanishes then we have the familiar relation from the quantum cohomology of $\C\P^{N-2}$, namely $H_j^{N-1} = T^{N-1} \cdot 1_X$.  In general however, we must twist this by $(-1)^{\ip{\eps}{A}}$, where $A$ is the homology class of the rational curve which contributes to this quantum product.  This class is exactly the class of a line on the $j$th $\C\P^{N-2}$ factor, and by arguing as in \cref{DiscClass} this is given by
\[
A_1 + \dots + \widehat{A_j} + \dots + A_N
\]
in terms of our basis for $H_2(X, L; \Z)$.  In the presence of the class $\eps$, the quantum cohomology relation thus becomes
\[
H_j^{N-1} = \Big( \prod_{k \neq j} \eps_k \Big)T^{N-1} \cdot 1_X.
\]
Using unitality of $\CO^0$ again, and the fact that $M^{N-1}$ is the identity (it's the monodromy around a contractible loop), we get
\[
h_{N-|I|}(\bm{\eta}_I) = \Big( \prod_{k \neq j} \eps_k \Big)T^{N-1}\cdot [M^{N-1} \otimes 1_L],
\]
which is what we want.
\end{proof}

We need to transform these equalities into a more usable form, via the yoga of symmetric polynomials.   First abbreviate $\eps_jT \cdot [M \otimes 1_L]$ to $t_j$ and observe that the $t_j$ commute with each other and (by writing $t_j$ as $\eta_1+\dots+\widehat{\eta}_j+\dots+\eta_N$) with the $\eta_k$.  The algebraic transformation is then given by

\begin{lem}\label{yoga}
Treating $\eta_j$, $\overline{\eta}$ and $t_j$ as commuting formal variables over $\Z$, subject only to the relations $\eta_j+t_j=\overline{\eta}$ for all $j$, the following are equivalent:
\begen
\item\label{constr1} $h_{N-|I|}(\bm{\eta}_I)=\prod_{j \notin I} t_j$ for all non-empty subsets $I \subset \{1, \dots, N\}$.
\item\label{constr2} $h_k(\bm{\eta})=\overline{\eta}^k$ for $k=1, \dots, N-1$.
\item\label{constr3} $e_1(\bm{\eta})=\overline{\eta}$, and $e_k(\bm{\eta})=0$ for $k=2, \dots, N-1$, where the $e_j$ are the elementary symmetric polynomials.
\end{enumerate}
\end{lem}
\begin{proof}
For any $k$ in $\{1, \dots, N-1\}$ and any subset $I \subset \{1, \dots, N\}$ of size $N-k$ we have
\begin{equation}
\label{eqYoga}
h_k(\bm{\eta}) = h_k(\bm{\eta}_I) + \sum_{j_1 \notin I} \eta_{j_1} h_{k-1}(\bm{\eta}_{I \cup \{j_1\}}) + \sum_{\substack{j_1, j_2 \notin I \\ j_1 < j_2}} \eta_{j_1}\eta_{j_2} h_{k-2}(\bm{\eta}_{I \cup \{j_1, j_2\}}) + \dots.
\end{equation}
Here the first term on the right-hand side contains precisely the monomials from the left-hand side which involve only those $\eta_j$ with $j \in I$, the second term (the sum over $j_1$) contains those monomials involving exactly one $\eta_j$ from outside this set, the third term those monomials involving exactly two from outside the set, and so on.

Assuming \ref{constr1} holds, \eqref{eqYoga} gives
\begin{align*}
h_k(\bm{\eta}) &= \prod_{j \notin I} t_j + \sum_{j_1 \notin I} \eta_{j_1} \prod_{j \notin I \cup \{j_1\}}t_j + \sum_{\substack{j_1, j_2 \notin I \\ j_1 < j_2}} \eta_{j_1}\eta_{j_2} \prod_{j \notin I \cup \{j_1, j_2\}}t_j + \dots
\\ &= \prod_{j \notin I} (t_j+\eta_j).
\end{align*}
The right-hand side is nothing but $\overline{\eta}^k$, so \ref{constr2} holds.  Coversely, suppose \ref{constr2} holds, and that \ref{constr1} holds for all subsets of size strictly greater than $k$ (the base case---where the subset is all of $\{1, \dots, N\}$---is trivial).  From \eqref{eqYoga} we then get
\[
\overline{\eta}^k = h_{N-|I|}(\bm{\eta}_I) + \sum_{j_1 \notin I} \eta_{j_1} \prod_{j \notin I \cup \{j_1\}}t_j + \sum_{\substack{j_1, j_2 \notin I \\ j_1 < j_2}} \eta_{j_1}\eta_{j_2} \prod_{j \notin I \cup \{j_1, j_2\}}t_j + \dots.
\]
Expanding the left-hand side as $\prod_{j \notin I} (t_j+\eta_j)$, we see that $h_{N-|I|}(\bm{\eta}_I) = \prod_{j \notin I} t_j$, so by downward induction on $|I|$ \ref{constr1} holds.  Thus \ref{constr1} and \ref{constr2} are equivalent.

Now use the fact that
\[
\sum_{r \geq 0} h_r(\bm{\eta})z^r = \prod_j \frac{1}{1-\eta_jz} = \frac{1}{\sum_{r \geq 0} e_r(\bm{\eta})z^r}
\]
as power series in $z$.  Both \ref{constr2} and \ref{constr3} are equivalent to this quantity being $1/(1-\overline{\eta}z) + O(z^N)$.
\end{proof}

Applying the equivalence of \ref{constr1} and \ref{constr3} to \cref{CalcCO} we obtain

\begin{cor}\label{CorCO}
The classes $\eta_j$, $t_j$ and $\overline{\eta}$ in $HF^*$ satisfy $\eta_j+t_j=\overline{\eta}$ for all $j$ and
\begin{flalign*}
&& e_k(\bm{\eta}) &= \begin{cases} \overline{\eta} & \text{if } k=1 \\ 0 & \text{if } k = 2, \dots, N-1.\end{cases} & \qed
\end{flalign*}
\end{cor}

\begin{rmk}
\label{rmkCharCorn}
The closed--open map for Lagrangians invariant under a loop $\gamma$ of Hamiltonian diffeomorphisms has been studied by Charette--Cornea \cite{CharCorn} and more recently by Tonkonog \cite{TonkS1}.  If $S(\gamma)$ denotes the Seidel element in $QH^*(X)$ defined by $\gamma$, they showed that after setting the variable $T$ to $1$, and with the trivial local system,  $\CO^0(S(\gamma))$ is equal to $\pm 1_L$, where the sign depends on the choice of spin structure.  We now illustrate this for our family, and verify its consistency with the above computations.

Let $\gamma$ be the action of the loop $(g_\theta)$ in $\PSU(N-1)$ given by the diagonal matrices with diagonal entries $e^{-i\theta/(N-1)}(e^{i\theta}, 1, \dots, 1)$.  Taking the zero background class in $H^2(X; \Z/2)$, and applying a result of McDuff--Tolman \cite[Theorem 1.10]{McDuffTolman} to each $\C\P^{N-2}$ factor, we obtain $S(\gamma) = H_1 \dots H_N$.  \cite[Theorem 1.7]{TonkS1} then yields $e_N(\bm{\eta}) = \pm 1_L$.  On the other hand, by \cref{CorCO} we have
\[
\prod_{j=1}^N t_j = \prod_{j=1}^N (\overline{\eta} - \eta_j) = \sum_{j=0}^N (-1)^j e_j(\bm{\eta}) \overline{\eta}^{N-j} = (-1)^N e_N(\bm{\eta}).
\]
Taking the trivial local system and standard spin structure, and setting $T$ to $1$, which corresponds to replacing each $t_j$ by $1_L$, we obtain $e_N(\bm{\eta}) = (-1)^N \cdot 1_L$.  It remains to check that the sign from \cite{CharCorn,TonkS1} is also $(-1)^N$.

Let $\overline{\gamma}$ denote an orbit of $\gamma$ on $L$.  A choice of spin structure on $L$ induces a homotopy class of trivialisation of $\overline{\gamma}^*TL$, and by \cite[Theorem 1.7]{TonkS1} the sign of $\CO^0(S(\gamma))$ is $+1$ if and only if this homotopy class agrees with that defined by transport by $\gamma$.  In our case, the latter corresponds to the identification
\[
T_{\overline{\gamma}(\theta)}L = (g_\theta)_* T_{\overline{\gamma}(0)}L = g_\theta \mathfrak{su}(N-1)\cdot \overline{\gamma}(0)
\]
for all $\theta$.  The standard spin structure, meanwhile, uses the identification
\[
T_{\overline{\gamma}(\theta)}L = \mathfrak{su}(N-1)\cdot \overline{\gamma}(\theta) = \mathfrak{su}(N-1)g_\theta\cdot \overline{\gamma}(0).
\]
The two trivialisations therefore differ by the loop $\widetilde{\gamma}$ in $\SL(\mathfrak{su}(N-1))$ defined by the adjoint action of $(g_\theta)$.

Under this action $\mathfrak{su}(N-1)$ decomposes as $N-2$ two-dimensional subspaces which $g_\theta$ rotates by angle $\theta$, and a complement which is fixed.  Thinking of $\mathfrak{su}(N-1)$ as $(N-1)\times(N-1)$ skew-Hermitian matrices, the rotated subspaces correspond to the entries along the top row (or equivalently the first column) apart from the first one.  The loop $\widetilde{\gamma}$ therefore represents $N-2$ times the generator of $\pi_1 (\SL(\mathfrak{su}(N-1))) \cong \Z/2$, so the two homotopy classes of trivialisation coincide if and only if $N$ is even: hence the factor of $(-1)^N$.
\end{rmk}


We now reach the punchline:

\begin{thm}\label{PrimePowProp}
Assume that the coefficient ring $R$ contains a field $\mathbb{K}$, and let $p$ denote its characteristic (prime or $0$).  Then $L^\flat$ is narrow, i.e.~$HF^*(L^\flat, L^\flat; \Lambda)=0$, unless:
\begen
\item\label{primeitm1} $p$ is prime, $N$ is a power of $p$, and $s$ has signature $(N, 0)$ or $(0, N)$ if $p \neq 2$.
\item\label{primeitm2} $p$ is prime, $N$ is twice a power of $p$, and $s$ has signature $(N/2, N/2)$ if $p \neq 2$.
\item\label{primeitm3} $p=5$, $N=3$, and $s$ has signature $(2, 1)$ or $(1, 2)$.
\end{enumerate}
Here the signature of the relative spin structure $s$ is defined to be
\[
(\text{number of $\eps_j$ equal to $+1$}, \text{number of $\eps_j$ equal to $-1$}).
\]
Note that the relative spin structure, and hence signature, is irrelevant in characteristic $2$.
\end{thm}
\begin{proof}
Assume $HF^* \neq 0$, and let $s$ have signature $(a, b)$ where $a+b=N$.  By reordering the $\C\P^{N-2}$ factors, we may assume that $\eps_1 = \dots = \eps_a = 1$ and $\eps_{a+1} = \dots = \eps_N = -1$.  We write $T\cdot[M \otimes 1]$ as $t$, so that each $t_j$ is given by $\eps_j t$.  Recall that the $\eta_j$ and $t_j$ all commute with each other.  The $e_1$ condition of \cref{CorCO} gives
\begin{equation}
\label{eqe1}
a(\overline{\eta}-t)+b(\overline{\eta}+t)=\overline{\eta},
\end{equation}
whilst the remaining conditions give
\begin{equation}
\label{eqks}
\sum_{j=0}^k \binom{a}{j}\binom{b}{k-j}(\overline{\eta}-t)^j(\overline{\eta}+t)^{k-j} = 0.
\end{equation}
for $k=2, \dots, N-1$.  These are all equalities in $HF^*$.

Suppose first that $N \geq 4$ and $p \neq 2$.  The equations $a+b = N$ and \eqref{eqe1} give
\begin{equation}
\label{eqab}
at = \frac{Nt+(N-1)\overline{\eta}}{2} \quad \text{and} \quad bt = \frac{Nt-(N-1)\overline{\eta}}{2}.
\end{equation}
Substituting into \eqref{eqks} with $k=2$ and $k=3$ (using the fact that $N \geq 4$), and simplifying, we get
\begin{equation}
\label{eqk1}
\overline{\eta}^2(N-1) = Nt^2
\end{equation}
and
\begin{equation}
\label{eqk2}
\overline{\eta}\lb2t^2-\overline{\eta}^2(N-1)+Nt^2\rb = 0.
\end{equation}
Now substituting \eqref{eqk1} into \eqref{eqk2}, and using the fact that $t$ is invertible (its inverse is $T^{-1}\cdot[M^{-1} \otimes 1_L]$), we conclude that $\overline{\eta}=0$.  Plugging this back into \eqref{eqk1} and \eqref{eqab} we see that $N\cdot 1_L$, $a \cdot 1_L$ and $b \cdot 1_L$ vanish in $HF^*$.  The $\mathbb{K}$-subalgebra of $HF^*$ generated by the unit is isomorphic to $\mathbb{K}$ itself, so we deduce that $N \equiv a \equiv b \equiv 0$, where $\equiv$ denotes equality in $\mathbb{K}$.  In particular, $p$ must be prime, not $0$.  We can then express the equations \eqref{eqks} as
\begin{equation}
\label{eqks2}
\sum_{j=0}^k \binom{a}{j}\binom{b}{k-j}(-1)^j \equiv 0
\end{equation}
for $k$ in $\{2, \dots, N-1\}$.

Next assume that $a$ and $b$ are both positive.  We claim that $a$ and $b$ are both equal to the same power of $p$, so we are in case\ref{primeitm2}.  Write $a=p^\alpha A$ and $b=p^\beta B$ where $\alpha$, $\beta$, $A$ and $B$ are positive integers with $A$ and $B$ not divisible by $p$.  Suppose for contradiction that $a$ and $b$ are not both powers of $p$, so $A$ and $B$ are not both $1$.  Then $p^\alpha+p^\beta$ lies in $\{2, \dots, N-1\}$, and hence we can take $k=p^\alpha+p^\beta$ in \eqref{eqks2}.  All terms on the left-hand side are divisible by $p$ except for that with $j=p^\alpha$, which is not, so we get the desired contradiction and conclude that $a$ and $b$ are indeed both powers of $p$.

Now suppose for contradiction that $a \neq b$ and take $k=\min(a, b)$ in \eqref{eqks2}.  All terms on the left-hand side are divisible by $p$ except the first (if $a > b$) or the last (if $b > a$), so again we obtain a contradiction and now conclude that $a=b$.  Putting everything together, we deduce that if $N \geq 4$, $p \neq 2$, and $a,b>0$ then \ref{primeitm2} holds.

Next we deal with the case $N \geq 4$, $p\neq 2$, and $b=0$; the $a=0$ case is analogous.  We need to show that $N$ is a power of $p$.  Now \eqref{eqks2} reads
\begin{equation}
\label{eqaks}
\binom{a}{k} \equiv 0
\end{equation}
for all $k$ in $\{2, \dots, N-1\}$, and if $a=p^\alpha A$ then $k=p^\alpha$ gives a contradiction unless $A=1$.  Therefore $N=a$ is a power of $p$ and \ref{primeitm1} holds.

Now consider the case $p=2$ with $N \geq 3$.  Since here there is no distinction between $1$ and $-1$ we may assume that $b=0$.  From \eqref{eqe1} we see that $(N-1)\overline{\eta} = Nt$, which forces $N$ to be even and hence $\overline{\eta}$ to vanish (otherwise the left-hand side is zero but the right-hand side is invertible).  This turns \eqref{eqks} into \eqref{eqaks}, and we argue as in the previous paragraph to show that \ref{primeitm1} holds.

Finally we turn to the exceptional case $N=3$ and $p \neq 2$.  \eqref{eqe1} and \eqref{eqk1} are still valid, and they give $2\overline{\eta} = (a-b)t$ and $2\overline{\eta}^2 = 3t^2$ respectively.  Combining these we get $(a-b)^2 \equiv 6$, and the left-hand side can only take the values $9$ and $1$, so a solution is only possible if $p$ is $3$ or $5$.  The corresponding values of $|a-b|$ are $3$ and $1$ respectively, so \ref{primeitm1} or \ref{primeitm3} holds.
\end{proof}

\begin{cor}[First part of \cref{mthmHF}]
\label{corPrimePow}
Over any coefficient ring, and with any relative spin structure and local system, $L$ is narrow unless $N$ is a prime power or twice a prime power.
\end{cor}
\begin{proof}
If $HF^*$ is non-zero then the subring generated by the unit is of the form $\Z/(m)$ for some non-negative integer $m$.  Take a prime $p$ dividing $m$, replace $R$ by $R \otimes_\Z \Z/(p)$---which contains the field $\Z/(p)$---and apply \cref{PrimePowProp}.  The new $HF^*$ is still non-zero by universal coefficients.
\end{proof}


\subsection{Wideness}
\label{sscWideness}

Having seen that the Lagrangians in our family are narrow in many circumstances, we now use the methods of \cref{secOhSS} to show that for the trivial local system they are in fact wide in cases \ref{primeitm1} and \ref{primeitm2} of \cref{PrimePowProp}.

\begin{rmk}
In \cite{Casebook} we show that the same holds in case \ref{primeitm3}, but this uses different methods.
\end{rmk}

The first observation is that $\symp(X, L)$ contains an obvious subgroup isomorphic to $S_N$, the symmetric group on $N$ objects, which acts by permuting the factors of $X$.  This action preserves $L$ as it preserves the moment map \eqref{eqMomMap}, of which $L$ is the zero set.  In a slight abuse of notation we will refer to this subgroup simply as $S_N$.  Our goal is to study the action of $S_N$ on the Oh spectral sequence using \cref{labSSrep}.  First we need to understand its action on the cohomology and spin structure of $L$, and on disc classes.

\begin{lem}\label{HtpyTriv}  $S_N$ acts on $H_2(X, L)$ by permuting the $A_j$.  Its action on $L$ is trivial up to isotopy, and hence is trivial on $H^*(L; \Z)$ and preserves any (absolute) spin structure.
\end{lem}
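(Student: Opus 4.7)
The first assertion will come down to bookkeeping with intersection numbers.  Directly from \eqref{eqZI} the $S_N$-action sends $Z_I$ to $Z_{\sigma(I)}$, and in particular permutes the irreducible components \eqref{eqDivCpts} of the compactification divisor $Y$ by $j \leftrightarrow \sigma(j)$.  Since the basis $\{A_j\}$ of $H_2(X, L)$ produced in \cref{DiscClasses} is defined to be dual to these components under the intersection pairing with $H_{2n-2}(X \setminus L)$, naturality of the pairing will immediately give $\sigma_* A_j = A_{\sigma(j)}$.

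For the isotopy statement, my plan is to identify $L$ with $\PSU(N-1)$ via the free orbit map $g \mapsto g \cdot x$, and to exhibit the $\sigma$-action on $L$ as right multiplication by a suitable element $h_\sigma \in \PSU(N-1)$.  Concretely, the claim I need is that for each $\sigma \in S_N$ there exists $h_\sigma \in \PSU(N-1)$ with $h_\sigma \cdot [v_j] = [v_{\sigma^{-1}(j)}]$ for every $j$: for any lift $\tilde{h}_\sigma \in \SU(N-1)$ the computation
\[
\sigma \cdot (g \cdot x) = \bigl( [g v_{\sigma^{-1}(1)}], \dots, [g v_{\sigma^{-1}(N)}] \bigr) = (g \tilde{h}_\sigma) \cdot x
\]
will then say exactly that $\sigma|_L$ corresponds to $g \mapsto g h_\sigma$.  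Connectedness of $\PSU(N-1)$ supplies a path $\gamma \mc [0, 1] \to \PSU(N-1)$ from $e$ to $h_\sigma$, and the isotopy through right multiplications by $\gamma(t)$ is the required one.  Both remaining consequences will then be automatic: the induced map on $H^*(L; \Z)$ is trivial by homotopy invariance, and any isotopy acts trivially on the discrete torsor of spin structures, so the standard one is preserved.

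The core step is therefore the construction of $h_\sigma$, and the plan here is to exploit the very high symmetry of the configuration $(v_1, \dots, v_N)$ through its Gram matrix.  A short calculation using $\sum_{l=0}^{N-1} \zeta^{lm} = N \delta_{m \equiv 0 \bmod N}$ gives
\[
\langle v_j, v_k \rangle = N \delta_{jk} - 1,
\]
so the Gram matrix is $N I_N - J_N$ and is manifestly invariant under any simultaneous row/column permutation.  Hence the assignment $v_j \mapsto v_{\sigma^{-1}(j)}$ preserves inner products; since $v_1, \dots, v_N$ span $\C^{N-1}$ with only the relation $\sum_j v_j = 0$ (itself $S_N$-symmetric), it extends consistently and uniquely to a unitary $\tilde{h}_\sigma \in \U(N-1)$, and descending through the standard isomorphism $\PU(N-1) \cong \PSU(N-1)$ yields $h_\sigma$.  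The only conceptual subtlety here is recognising at the outset that the configuration $[v_1], \dots, [v_N] \in \C\P^{N-2}$ is projectively a regular $N$-simplex, so that its symmetry group contains $S_N$; everything else is essentially linear algebra.
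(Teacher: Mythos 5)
Your proof is correct, but it takes a genuinely different route from the paper's. For the isotopy claim, the paper argues geometrically: it fibres $L$ by the boundary circles of the index $2(N-2)$ axial discs through points of $\mathcal{O}_{N-2}$ and obtains the isotopy by rotating these circles through an angle $\theta \in [0, \pi]$ (verifying, implicitly, that rotating by $\pi$ gives the transposition of the first two factors; it then suffices to treat transpositions since they generate $S_N$). You instead identify $L$ with $\PSU(N-1)$ via the free orbit map, observe that $\sigma|_L$ is right translation by $h_\sigma$ (because the $S_N$-action commutes with the diagonal $\PSU(N-1)$-action), and invoke connectedness of the group. This is cleaner and more uniform: it handles all of $S_N$ at once with no disc analysis, and the existence of $h_\sigma$ actually comes for free from the paper's earlier observation that $S_N$ preserves the moment-map zero set (so $\sigma \cdot x \in L = \PSU(N-1) \cdot x$ and $h_\sigma$ is the unique group element with $h_\sigma \cdot x = \sigma \cdot x$); your explicit Gram-matrix construction of $\tilde{h}_\sigma$ is therefore somewhat redundant, though it gives a nice self-contained alternative. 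In fact the two arguments are secretly the same: the paper's rotation is right translation by $e^{t\pi\xi}$, which is a specific choice of your connecting path $\gamma$. One tiny imprecision: the unitary you build lies in $\U(N-1)$ rather than $\SU(N-1)$, so the phrase ``for any lift $\tilde{h}_\sigma \in \SU(N-1)$'' should be ``for any lift to $\U(N-1)$'' (you fix this implicitly in the next sentence by passing through $\PU(N-1) \cong \PSU(N-1)$).
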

\begin{proof}
The disc classes $A_j$ are dual to the divisors \eqref{eqDivCpts}, and these are manifestly permuted by $S_N$.  This proves the first statement.

For the second it suffices to show that the transposition of two factors is isotopic to the identity on $L$ and without loss of generality we may assume they are the first two.  Recall that $L$ is the free $\PSU(N-1)$-orbit of $x=([v_1], \dots, [v_N])$, so points of $L$ can be written uniquely in the form $g \cdot x$ for $g \in \PSU(N-1)$.  We claim that there exists an element $h$ in $\PSU(N-1)$ which swaps $[v_1]$ and $[v_2]$ but fixes the other $[v_j]$.  Since $\PSU(N-1)$ is connected, we can pick a path $h_t$ from $h_0 = \id$ to $h_1 = h$, and for each $t$ define a diffeomorphism $\phi_t$ of $L$ by $g \cdot x \mapsto gh_t \cdot x$.  The family $\phi_t$ then provides an isotopy from the identity to the required transposition.

It is left to construct $h$, so let $V \subset \C^{N-1}$ be the span of $v_3, \dots, v_N$ and $V^\perp$ its orthogonal complement.  For any angle $\theta$ there is a unique matrix $M_\theta$ in $\SU(N-1)$ which acts as $e^{i\theta}$ on $V$ and $e^{-(N-2)i\theta}$ on $V^\perp$, and we will be done if we can find $\theta$ such that $M_\theta$ swaps $[v_1]$ and $[v_2]$.  To do this, write $u_j$ and $w_j$ for the orthogonal projections of $v_j$ onto $V$ and $V^\perp$ respectively.  Recall from \eqref{vjInnerProduct} that we have $\ip{v_j}{v_k}=-1$ for distinct $j$ and $k$, so $u_1$ and $u_2$ must coincide ($v_1$ and $v_2$ have equal inner products with a set of vectors that span $V$).  We also have $v_1+\dots+v_N=0$, so projecting onto $V^\perp$ gives $w_1+w_2=0$.  Swapping $[v_1]$ and $[v_2]$ therefore corresponds to reversing the sign of $V^\perp$ relative to $V$, so choosing $\theta$ to satisfy $e^{-i(N-2)\theta}=-e^{i\theta}$ does the job.
\end{proof}

In light of this, \cref{labSSrep} tells us that if we equip $L$ with the standard spin structure and trivial local system then the differentials in the Oh spectral sequence over $R[H_2^D] \cong R[T_1^{\pm 1}, \dots, T_N^{\pm 1}]$ (identifying $T^{A_j}$ with $T_j$) commute with the action of $S_N$ by permuting the $T_j$.  In particular, when restricted to the subset $H^*(L; R)$ of the zeroth column of $\ssE{1}$, the $\diff_1$ differential lands in
\[
H^*(L; R) \otimes (\text{\emph{symmetric} Laurent polynomials in the $T_j$, homogeneous of degree $2$})
\]
(recall that each $T_j$ has degree $2$, so the total exponent in each Laurent monomial must be $1$).  Similarly, when restricted to whatever survives from this subset to the zeroth column of $\ssE{r}$, the Laurent polynomial coefficients output by the $\diff_r$ differential are symmetric and homogeneous of degree $2r$.

Our strategy is rougly as follows (warning: this paragraph involves various imprecise or not quite correct statements).  Assume for simplicity that $R$ is a field of prime characteristic $p$ and $N$ is a power of $p$.  We wish to show that $L^\flat$ is wide, and by the proof of \cref{labBCWide} it suffices to show that the differentials in the spectral sequence over $\Lambda$ vanish on pages $\ssE{1}, \dots, \ssE{N-1}$.  In the previous paragraph we saw that over $R[H_2^D]$ the differentials only output Laurent polynomial coefficients which are symmetric and homogeneous of degree $2, 4, \dots, 2(N-1)$, so we are done if we can show that all such Laurent polynomials vanish upon passing to $\Lambda$ by setting all $T_j$ equal to $T$.  This vanishing follows from the fact that $\Char R=p$ and that $N$ is a power of $p$.

To make this strategy go through, we actually argue slightly differently.  We study the spectral sequence over the quotient $\Lambda_R^H \coloneqq R[H_2^D]/(e_1, \dots, e_{N-1})$, where $e_j$ is the $j$th elementary symmetric polynomial.  Using the representation theory of $S_N$ we show that all differentials vanish in the case $R=\Q$; really we care about fields $R$ of prime characteristic dividing $N$, but in this case the representation theory may be badly behaved.  We then use a universal coefficients argument, passing via $\Lambda_\Z'$, to show that the differentials also vanish over $\Lambda_R^H$ when $R$ is a field of characteristic coprime to $N-1$.  The conditions on $\Char R$, $N$, and $s$ from \cref{PrimePowProp} then enter when we try to pass from $\Lambda_R^H$ to $\Lambda=\Lambda_R$.


\subsection{Representation theory}
\label{sscRepThy}

The main ingredient, which is the goal of this subsection, is an understanding of $\Lambda_\Q^H$ as a $\Q[S_N]$-module.  We could replace $\Q$ by any field of characteristic coprime to $|S_N|=N!$ in these results since all we use is the semisimplicity of $\Q[S_N]$.  However, this extra generality could be misleading because the fields we are ultimately interested in do not satisfy this condition.  As sketched above, we will work around this using the universal coefficient theorem.

As a stepping stone we first consider instead the quotient $\Q[T_1, \dots, T_N]/(e_1, \dots, e_N)$; note that here the ideal includes $e_N$, whilst in the definition of $\Lambda_\Q^H$ it does not.  Recall the well-known fact that $e_1, \dots, e_N$ are algebraically independent and that $\Q[T_1, \dots, T_N]/(e_1, \dots, e_N)$ is a free $\Q$-vector space of rank $N!$.  The identification of this module as a representation of $S_N$ is standard (see, e.g.~\cite[Corollary 2.5.8]{Manivel} for the statement over $\C$), and can be done by pure algebra, but there is also a beautiful geometric argument, explained to the author by Oscar Randal-Williams:

\begin{lem}\label{lemSrRep}  The representation
\[
\Q[T_1, \dots, T_N]/(e_1, \dots, e_N)
\]
of $S_N$ is isomorphic to the regular representation $\Q[S_N]$.
\end{lem}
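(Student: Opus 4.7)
My plan is to compute the $S_N$-character of the coinvariant algebra $C = R[T_1, \dots, T_N]/(e_1, \dots, e_N)$ and verify that it matches the character of the regular representation $R[S_N]$. Since $|S_N| = N!$ is invertible in $R$, the group algebra $R[S_N]$ is semisimple and characters determine representations, so this suffices.

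The input is that $e_1, \dots, e_N$ form a regular sequence in $R[T_1, \dots, T_N]$; indeed the polynomial ring is a free module (of rank $N!$) over the algebraically independent subring of invariants $R[e_1, \dots, e_N]$. The Koszul complex on this regular sequence provides an $S_N$-equivariant resolution of $C$, and because the $e_i$ are $S_N$-invariant the $i$th term is simply $\Lambda^i R^N \otimes R[T_1, \dots, T_N]$ with $S_N$ acting trivially on the exterior factor. Taking the graded $S_N$-equivariant Euler characteristic and using the standard identity $\mathrm{tr}(w\mid R[T_1, \dots, T_N])_t = 1/\det(1 - tw\mid R^N)$, I would obtain
\[
\mathrm{ch}_t(w \mid C) = \frac{\prod_{k=1}^N(1 - t^k)}{\det(1 - tw \mid R^N)}
\]
for every $w \in S_N$, and the ungraded character is then the value of this rational function at $t = 1$.

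The key step is the evaluation at $t = 1$. If $w$ has cycle type $(1^{a_1}2^{a_2}\cdots)$ then the denominator factors as $\prod_j (1 - t^j)^{a_j}$, vanishing to order $\sum_j a_j$ at $t = 1$, while the numerator vanishes to order exactly $N$. Because $\sum_j j a_j = N$, we always have $\sum_j a_j \leq N$ with equality iff $w = 1$. Hence $\mathrm{ch}_t(w\mid C)$ vanishes at $t = 1$ for every non-identity $w$, while for $w = 1$ the ratio simplifies to $\prod_{k=1}^N(1 + t + \dots + t^{k-1})$, whose value at $t = 1$ is $N!$. This reproduces the character of the regular representation exactly.

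No step is genuinely hard; the work lies in the bookkeeping around the Koszul resolution and the order-of-vanishing comparison. The more conceptual alternative, which I suspect is the `beautiful geometric argument' alluded to, would identify $C$ with $H^*(U(N)/T; R)$ via Borel's theorem, with the $S_N$-action on $C$ corresponding to the free right action of the Weyl group $N_{U(N)}(T)/T$ on $U(N)/T$, and then extract the representation structure either directly from the $N!$ Bruhat cells or by exploiting the rational acyclicity of the unordered flag quotient $U(N)/N_{U(N)}(T)$.
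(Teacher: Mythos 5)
Your argument is correct in outline and is precisely the ``piece of straight algebra'' that the paper alludes to but deliberately does not take. The paper's proof is the geometric one you suspected: it identifies $\Z[T_1,\dots,T_N]/(e_1,\dots,e_N)$ with $H^*(F;\Z)$ for the complete flag variety $F=\U(N)/T^N$ via the tower of projective-bundle fibrations, observes that the $S_N$-action (permuting columns) is \emph{free} on $F$, lifts a cell structure from $F/S_N$ to obtain a cochain complex of \emph{free} $R[S_N]$-modules computing $H^*(F;R)$, and then takes the Euler characteristic valued in the Grothendieck group of the semisimple algebra $R[S_N]$: from the free complex this is a multiple of the regular representation, and since the cohomology is concentrated in even degrees it equals $H^*(F;R)$ itself; a dimension count gives multiplicity one. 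What the geometric route buys is that it never needs to evaluate a character: freeness of the action does all the work. What your Koszul/Molien route buys is independence from topology and the explicit graded refinement $\mathrm{ch}_t(w\mid C)=\prod_k(1-t^k)/\det(1-tw)$, which is strictly more information than the lemma asserts.

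One step of yours needs repair as written: over a field $R$ of characteristic $p>0$, the $R$-valued trace function does \emph{not} determine a semisimple module, even when $p\nmid|G|$ --- it only sees multiplicities modulo $p$ (e.g.\ $V^{\oplus p}\oplus W$ and $W$ have equal traces), so ``characters determine representations'' is false at face value and the equality of your character with that of $R[S_N]$ does not immediately finish the proof. The fix is standard: your entire computation is the reduction of one over $\Z$, so first conclude $C_\Q\cong\Q[S_N]$ by honest characteristic-zero character theory, and then invoke the fact that for $p\nmid|G|$ reduction modulo $p$ of lattices induces an isomorphism of Grothendieck groups (equivalently, use Brauer characters, whose values here are the integer traces you computed). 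Alternatively, adopt the paper's device of working with Euler characteristics valued in tuples of multiplicities rather than with trace functions. With that adjustment your proof is complete.
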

\begin{proof}[Sketch proof]
The ring $\Z[T_1, \dots, T_N]/(e_1, \dots, e_N)$ is isomorphic to the cohomology of the variety $F$ of complete flags in $\C^N$, with $T_j$ giving the first Chern class of the $j$th tautological line bundle.
Viewing $F$ as the quotient of the unitary group $\U(N)$ by the right-action of diagonal matrices $T^N$, the $S_N$-action on $\Q[T_1, \dots, T_N]/(e_1, \dots, e_N)$ corresponds to the action on $H^*(F; \Q)$ induced by permutation of the columns of matrices in $\U(N)/T^N$.  This action on $F$ is clearly free, so there is a smooth quotient manifold $F/S_N$.  Lifting a cell structure from this quotient, we obtain a cellular cochain complex of free $\Q[S_N]$-modules which computes $H^*(F; \Q)$ as a representation of $S_N$.

Since $\Q[S_N]$ is semisimple, it makes sense to talk about the Euler characteristic of a complex of $\Q[S_N]$-modules: this is a tuple of multiplicities indexed by isomorphism classes of simple modules.  For our lifted complex this Euler characteristic is manifestly an integer multiple of the multiplicities occurring in the regular representation.  On the other hand, the cohomology is concentrated in even degrees, so the Euler characteristic is just the multiplicities occurring in
\[
H^*(F; \Q) \cong \Q[T_1, \dots, T_N]/(e_1, \dots, e_N).
\]
Therefore the latter is isomorphic to a direct sum of copies of $\Q[S_N]$, and counting dimensions we see that the number of copies is exactly $1$.
\end{proof}

To transfer this to $\Lambda_\Q^H$, in which the powers of the $T_j$ can be negative, we filter by `the number of negative powers'.  More precisely, let $F^0$ be the image of $\Q[T_1, \dots, T_N]$ in $\Lambda_\Q^H$, and for each integer $r$ let $F^r = e_N^r F^0$.  We obtain a descending $\Q[S_N]$-module filtration of $\Lambda_\Q^H$ which is Hausdorff $(\cap_r F^r=0$) and exhaustive ($\cup_r F^r = \Lambda_\Q^H$).  Moreover, each quotient $F^r/F^{r+1}$ is isomorphic, via multiplication by $e_N^{-r}$, as a graded $\Q[S_N]$-module to $F^0/F^1[-2Nr]$.  Here $[-2Nr]$ denotes the grading shift---all ideals and submodules involved are homogeneous, so everything inherits a natural grading.  Using this we can compute what we want:

\begin{prop}
There is an isomorphism of graded $\Q[S_N]$-modules
\[
\Lambda_\Q^H \cong \bigoplus_{r \in \Z} \Q[S_N][-2Nr].
\]
\end{prop}
\begin{proof}
Write $P$ as shorthand for the polynomial ring $\Q[T_1, \dots, T_N]$.  Since the filtration is Hausdorff and exhaustive, and $\Lambda_\Q^H$ is finite-dimensional in each degree, we have an isomorphism of graded $\Q$-vector spaces between $\Lambda_\Q^H$ and the associated graded module of the above filtration.  Since $\Q[S_N]$ is semisimple, this upgrades to an isomorphism of graded $\Q[S_N]$-modules.  It is therefore left to show that $F^0/F^1$ is isomorphic to $\Q[S_N]$, and this will follow from \cref{lemSrRep} if we can show that the kernel of the map $P \rightarrow F^0/F^1$ is the ideal (of $P$) generated by $e_1, \dots, e_N$.

To show this, note that the kernel is precisely the intersection in $\Q[H_2^D]$ of the subring $P$ with $I + e_NP$, where $I$ is the ideal (of $\Q[H_2^D]$) generated by $e_1, \dots, e_{N-1}$.  It therefore suffices to show that $I \cap P$ is the ideal of $P$ generated by $e_1, \dots, e_{N-1}$, and this is what we shall do.

Let $B$ be a free basis for $P$ as an $\Q[e_1, \dots, e_N]$-module (of rank $N!$).  Since $\Q[H_2^D]$ is obtained from $P$ by adjoining an inverse to $e_N$ (we can express $T_1^{-1}$ as $T_2\cdots T_N e_N^{-1}$ for example) $B$ also forms a free basis for $\Q[H_2^D]$ as an $\Q[e_1, \dots, e_{N-1}, e_N^{\pm 1}]$-module.  Therefore $\Q[H_2^D]$ is a free $\Q$-module with basis
\[
\{e_1^{j_1}\dots e_N^{j_N}b : j_1, \dots, j_{N-1} \in \Z_{\geq 0} \text{, } j_N \in \Z \text{, and } b\in B\}.
\]
The ideal $I$ is the $\Q$-linear span of those elements with $j_l$ strictly positive for some $l$ in $\{1, \dots, N-1\}$, whilst $P$ is the span of those elements with $j_N \geq 0$.  Their intersection is thus the span of those elements for which some $j_l$ is positive \emph{and} $j_N \geq 0$, which is precisely the ideal of $P$ generated by $e_1, \dots, e_{N-1}$.
\end{proof}

For the argument sketched in \cref{sscWideness}, the relevant consequence is

\begin{cor}\label{corSrFix}  The $S_N$-invariant subring of $\Lambda_\Q^H$ is spanned by elements in degrees $2N\Z$.\hfill$\qed$
\end{cor}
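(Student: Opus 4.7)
The plan is to combine the graded $\Q[S_N]$-module decomposition \eqref{RepDecomp},
\[
\Lambda'_\Q \cong \bigoplus_{r \in \Z} P_\Q/(e_1, \ldots, e_N)[-2Nr],
\]
with an identification of the $S_N$-fixed subspace of a single summand. The decomposition is available over $\Q$ because $N!$ is invertible, so $\Q[S_N]$ is semisimple and the short exact sequences $0 \to F^{r+1} \to F^r \to F^r/F^{r+1} \to 0$ split equivariantly in each grading; for the same reason, the functor $(\cdot)^{S_N}$ is exact and commutes with direct sums. The question therefore reduces to locating the invariants in one copy of $P_\Q/(e_1, \ldots, e_N)$ and keeping track of the degree shift by $2Nr$.

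For the single-summand computation, by the fundamental theorem of symmetric polynomials the $S_N$-invariants of $P_\Q = \Q[T_1, \ldots, T_N]$ form the subring $\Q[e_1, \ldots, e_N]$, whose image in the quotient $P_\Q/(e_1, \ldots, e_N)$ is just the scalars $\Q$ in degree $0$; conversely, any $S_N$-invariant element of the quotient is represented by its $S_N$-average, which is a symmetric polynomial and hence already of this form. So the invariant subspace of $P_\Q/(e_1, \ldots, e_N)$ is exactly the one-dimensional copy of $\Q$ sitting in degree $0$. As an internal consistency check, \cref{lemSrRep} identifies the whole module with the regular representation $\Q[S_N]$, in which the trivial summand appears with multiplicity $\dim(\mathrm{triv}) = 1$, matching the one-dimensional answer supplied by the constants.

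Feeding this back into the decomposition yields
\[
(\Lambda'_\Q)^{S_N} \cong \bigoplus_{r \in \Z} \Q[-2Nr],
\]
a one-dimensional $\Q$-subspace in each degree of $2N\Z$ and zero elsewhere, as required. No step is genuinely hard; the only point demanding care is the degree bookkeeping, but the grading $|e_N|=2N$ pins down the shift between successive summands and so produces exactly the arithmetic progression $2N\Z$ claimed in the statement.
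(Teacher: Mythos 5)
Your proof is correct and matches the paper's overall strategy: reduce via the graded splitting \eqref{RepDecomp} to a single summand $P_\Q/(e_1,\dots,e_N)$, identify its invariants as the degree-$0$ scalars, and propagate the $2Nr$ degree shifts. The only difference lies in how the single-summand invariants are identified. The paper's implicit argument invokes \cref{lemSrRep}: since $P_\Q/(e_1,\dots,e_N)$ is the regular representation $\Q[S_N]$, the trivial representation occurs with multiplicity one, and the scalars in degree $0$ already provide a one-dimensional invariant subspace, so they must be all of it. You instead argue by averaging --- any invariant class equals its $S_N$-average, which is a symmetric polynomial and hence maps to a scalar in the quotient --- and correctly demote \cref{lemSrRep} to a consistency check. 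Your route is slightly more self-contained, bypassing the comparatively heavier flag-variety input to \cref{lemSrRep}; but since the paper needs that lemma elsewhere (e.g.\ for the dimension count in \cref{propSrWide} and for the flag-variety Lagrangian computation in \cref{sscRed}) there is no real saving, and the two arguments are at bottom the same elementary fact dressed two ways.
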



\subsection{Completing the proof of \cref{mthmHF}}

The other ingredient is the cohomology ring of $L \cong \PSU(N-1)$, as computed by Borel \cite{Borel}.  See also \cite{BauBrow}.  The properties we shall use are:

\begin{lem}
\label{HPSU}
$H^*(\PSU(N-1); \Z)$ is generated as a ring by elements of degree $\leq 2N-3$.  For any field $R$ of characteristic coprime to $N-1$ we have
\[
\dim_R H^*(\PSU(N-1); R) = 2^{N-2}.
\]
\end{lem}
\begin{proof}[Sketch proof]
The crucial idea is to replace the quotient $\PSU(N-1) = \U(N-1) / S^1$ by the homotopy quotient $\U(N-1) \times_{S^1} ES^1$, which is a principal $U(N-1)$-bundle over $BS^1 = \C\P^\infty$, so that we can employ the Serre spectral sequence.  The second page
\[
\ssE{2} \cong H^*(\C\P^\infty; H^*(\U(N-1); \Z))
\]
is isomorphic as a ring to $\Z[b] \otimes \Lambda_\Z (a_1, a_3, \dots, a_{2N-3})$, where $b$ has degree $2$ and $a_{2j-1}$ has degree $2j-1$.  The generators $a_{2j-1}$ of $H^*(\U(N-1); \Z)$ can be chosen so that each transgresses to the Chern class $c_j$ of the associated $\C^{N-1}$-bundle over $\C\P^\infty$ (this can be seen by viewing the bundle as a pullback of the universal bundle over $B\U(N-1)$ and using naturality of the Serre spectral sequence), and these Chern classes were computed in \cite[Section 4]{BauBrow} to be
\[
c_j = \binom{N-1}{j} b^j.
\]
Each page is therefore generated as a ring by elements of degree at most $2N-3$ (specifically by $b$ and whichever classes $a_{2j-1}$ that have survived), so the same is true of $H^*(\PSU(N-1); \Z)$.  Passing to a field of characteristic coprime to $N-1$, the class $b$ is hit by the $\ssE{2}$ differential, so $a_1$ and $b$ die but all other classes survive to the limit, giving the dimension equality.
\end{proof}

Combining this with \cref{corSrFix} we obtain the key result towards establishing wideness.

\begin{prop}\label{propSrWide}
Equipping $L$ with the standard spin structure and trivial local system, all differentials in the Oh spectral sequence over $\Lambda_\Q^H$ (starting from $\ssE{1}$) vanish.  The same holds with $\Q$ replaced by any field $R$ of characteristic $p$ (prime or $0$) not dividing $N-1$.
\end{prop}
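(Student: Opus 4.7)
The plan is to exploit the permutation action of $S_N \subset \symp(X, L)$ via \cref{labSSrep}, working over the quotient $\Lambda'_R$ of $\Lambda^\dag$ by the $S_N$-invariant ideal $(e_1, \dots, e_{N-1})$.  By \cref{HtpyTriv} this action is trivial on $H^*(L; R)$, preserves the standard spin structure, and trivially preserves the trivial local system, so every differential $\diff_r$ on the Oh spectral sequence is both $\Lambda'_R$-linear and $S_N$-equivariant.  On $E_1 = H^*(L; R) \otimes_R \Lambda'_R$ the $S_N$-action is trivial on the Morse factor and permutes the $T_j$'s on the Novikov factor.

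Proceeding inductively over the page number, I would reduce the vanishing of $\diff_r$ to the assertion that $\diff_r(a \otimes 1) = 0$ for every $a \in H^{\leq 2N - 3}(L; R)$.  This rests on three ingredients: $\Lambda'_R$-linearity together with $\diff_r(1_L) = 0$ (which handles $1 \otimes \Lambda'_R$), the Leibniz rule for $\diff_r$ with respect to $*_r$, and the Baum--Browder Serre spectral sequence computation just recalled, which under the hypothesis $p \nmid N - 1$ shows that $H^*(\PSU(N-1); \Z)$ has no $p$-torsion and is generated in degrees $\leq 2N - 3$, so $H^*(L; R)$ inherits the same generation.  This assertion itself splits by the size of $r$: for $r \geq N$ the image of $\diff_r(a \otimes 1)$ has Morse degree $q + 1 - 2r \leq (2N-3) + 1 - 2N = -2$ and vanishes trivially, while for $1 \leq r < N$, $S_N$-equivariance together with triviality of the action on $a \otimes 1$ forces $\diff_r(a \otimes 1) \in H^{q+1-2r}(L; R) \otimes ((\Lambda'_R)^r)^{S_N}$, reducing everything to showing that $((\Lambda'_R)^r)^{S_N} = 0$ for $1 \leq r < N$.

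Over $\Q$ this last vanishing is immediate from \cref{corSrFix} and finishes the case $R = \Q$.  The main obstacle is to extend it to fields of characteristic $p \nmid N - 1$ that may still divide $N!$, where the semisimplicity argument underpinning \cref{lemSrRep} breaks down.  I would argue directly: for $r < N$ the space $(\Lambda'_R)^r$ coincides with the degree-$r$ part of the coinvariant algebra $R[T_1, \dots, T_N]/(e_1, \dots, e_{N-1})$, and every $S_N$-invariant polynomial in $R[T_1, \dots, T_N]_r$ is a polynomial in the elementary symmetric functions of degree strictly less than $N$, hence lies in $(e_1, \dots, e_{N-1})$ and maps to zero in the quotient.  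Ruling out further $S_N$-invariants arising in the quotient amounts to a short direct computation with the orbit-sum basis of the permutation module $R[T_1, \dots, T_N]_r$, using transitivity of $S_N$ on monomials of each fixed shape; once this is in place, the vanishing holds in arbitrary characteristic and the proof is complete.
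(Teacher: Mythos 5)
Your argument over $\Q$ is essentially the paper's: $S_N$-equivariance of the differentials via \cref{labSSrep} and \cref{HtpyTriv}, vanishing of invariants in columns $1,\dots,N-1$ from \cref{corSrFix}, the Leibniz rule to propagate vanishing from $H^*(L;\Q)\otimes 1$, and the bidegree argument for $\diff_r$ with $r\geq N$ using generation of $H^*(\PSU(N-1))$ in degrees $\leq 2N-3$. That part is fine.

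The gap is in the extension to fields $R$ with $\Char R = p \nmid N-1$ but $p \mid N!$. First, a bookkeeping error: for $N \geq 4$ the degree-$r$ column of $\Lambda'_R$ is \emph{not} the degree-$r$ part of the coinvariant algebra; because $e_N$ is inverted, it also receives contributions from the degree-$(r+sN)$ pieces of $P_R/(e_1,\dots,e_N)$ for $s\geq 1$ up to the top degree $N(N-1)/2$ (e.g.\ for $N=4$, $r=2$, the one-dimensional top piece in degree $6$ appears via $e_4^{-1}$, and in characteristic $2$ it carries the \emph{trivial} representation since $\mathrm{sgn}=1$). Second, and more seriously, the claim you defer to "a short direct computation with the orbit-sum basis" is precisely the hard point. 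Orbit sums only detect invariants of the quotient that \emph{lift} to invariant polynomials; when $p\mid N!$ the modules involved are not semisimple, the coinvariant algebra is no longer the regular representation (\cref{lemSrRep} genuinely uses $\Char R \nmid N!$), and invariants of a quotient need not lift. Ruling out such "hidden" invariants in every column $1,\dots,N-1$ is a nontrivial question in modular invariant theory, not a routine check, and you give no argument for it. The paper sidesteps this entirely: having degenerated the spectral sequence over $\Q$, it passes through $\Lambda'_\Z$ and uses the universal coefficient theorem together with the absence of $p$-torsion in $H^*(\PSU(N-1);\Z)$ (this is where $p\nmid N-1$ enters, via the Baum--Browder computation) and the finite rank of $\Lambda'$ in each degree to get $\dim_R HF^j(L^\flat,L^\flat;\Lambda'_R) \geq \dim_R H^j(L;\Lambda'_R) = \dim_R \ssE{1}^j$; since the spectral sequence can only decrease dimensions, all differentials over $\Lambda'_R$ must vanish. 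You should either adopt that dimension-count argument or supply a genuine proof of the modular invariant-theoretic vanishing you are relying on.
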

\begin{proof}  Consider the first page $H^*(L; \Q) \otimes \Lambda_\Q^H$ of the spectral sequence over $\Lambda_\Q^H$.  We saw in \cref{sscWideness} that $S_N$ acts trivially on $H^*(L; \Q)$ and on the spin structure, so its action on this page is purely on $\Lambda_\Q^H$.  Restricting the differential $\diff_1$ to the subset $H^*(L; \Q) \otimes 1$ of the zeroth column, the output lands in
\[
H^*(L; \Q) \otimes (\text{$S_N$-invariant degree $2$ part of $\Lambda_\Q^H$}),
\]
but this is zero by \cref{corSrFix}.  Thus $\diff_1$ vanishes on this subset and hence, by $\Lambda_\Q^H$-linearity, on the whole page.

Now apply the same argument on $\ssE{2}, \dots, \ssE{N-1}$, using the fact that $\Lambda_\Q^H$ also has no invariant part in degrees $4, 6, \dots, 2(N-1)$, to see that $\diff_2, \dots, \diff_{N-1}$ also vanish.  All later differentials vanish by an argument analogous to \cref{labBCWide}, since $H^*(L; \Q)$ is generated in degrees less than $2N-1$ by \cref{HPSU}.

Letting $L^\flat$ denote our Lagrangian brane, we deduce that $HF^*(L^\flat, L^\flat; \Lambda_\Q^H) \cong H^*(L; \Lambda_\Q^H)$ as graded $\Q$-vector spaces.  Since $\Lambda_\Q^H$ is finite-dimensional in each degree (it has rank $N!$ over $\Q[e_N^{\pm1}]$), we can count dimensions and use the universal coefficient theorem to obtain, for any field $R$ and for all $j$,
\[
\dim_R HF^j(L^\flat, L^\flat; \Lambda_R^H) \geq \rank_\Z HF^j(L^\flat, L^\flat; \Lambda_\Z^H) = \dim_\Q HF^j(L^\flat, L^\flat; \Lambda_\Q^H) = \dim_\Q H^j(L; \Lambda_\Q^H).
\]
If $\Char R$ is coprime to $N-1$ then we can replace the right-hand side by $\dim_R H^j(L; \Lambda_R^H)$, using the second part of \cref{HPSU}, and deduce
\begin{equation}
\label{RankIneq}
\dim_R HF^j(L^\flat, L^\flat; \Lambda_R^H) \geq \dim_R H^j(L; \Lambda_R^H).
\end{equation}
Considering the spectral sequence for $L^\flat$ over $\Lambda_R^H$, we see that all differentials must vanish (and for each $j$ \eqref{RankIneq} is in fact an equality).
\end{proof}

Using this we can complete the proof of \cref{mthmHF}.

\begin{thm}\label{propWide}  Suppose $R$ is a field of characteristic $p$ (prime or $0$) not dividing $N-1$, and let $L^\flat$ be $L$ equipped with a relative spin structure $s$ and the trivial local system over $R$.  Let $\eps \in H^2(X, L; \Z/2)$ be the difference between $s$ and the standard spin structure, and let $\eps_j$ denote $(-1)^{\ip{\eps}{A_j}}$ as in \cref{sscCOconstraints}.  If the elementary symmetric polynomials $e_k(\eps_1, \dots, \eps_N)$ are zero in $R$ for $k=1, \dots, N-1$ then $L^\flat$ is wide over $R$.  In particular, this holds in cases \ref{primeitm1} and \ref{primeitm2} of \cref{PrimePowProp}, proving the second part of \cref{mthmHF}.
\end{thm}
\begin{proof}
By \cref{SpinChangeProp} the spectral sequence for $L^\flat$ over $\Lambda_R$ is obtained from the spectral sequence with the standard spin structure over $R[H_2^D]$ by sending $T_j$ to $\eps_j T$.  If the stated symmetric polynomials are zero then this map $R[H_2^D]\rightarrow \Lambda_R$ factors through $\Lambda_R^H$, and by \cref{propSrWide} the differentials in the spectral sequence over the latter all vanish.  In this case we conclude that the differentials for $L^\flat$ over $\Lambda_R$ vanish, so $L^\flat$ is wide.

To prove the last part, note that the symmetric polynomials are zero if and only if we have
\[
\prod_{j=1}^N (z+\eps_j) = z^N + \prod_{j=1}^N \eps_j
\]
in the polynomial ring $R[z]$.  In case \ref{primeitm1} this reduces to
\[
(z\pm1)^N = z^N+(\pm1)^N,
\]
whilst in case \ref{primeitm2} it becomes
\[
(z^2-1)^{N/2} = z^N+(-1)^{N/2}.
\]
Both of these equalities follow from the fact that $(x+y)^r = x^r + y^r$ in characteristic $p$ when $r$ is a power of $p$.
\end{proof}

\begin{qn}
In case \ref{primeitm1} each of the wide relative spin structures is invariant under the action of $S_N$, so there is an induced action on the ring $HF^*(L^\flat, L^\flat; \Lambda_R)$.  Setting $T=1$ we obtain a $2^{N-2}$-dimensional representation of $S_N$ over $R$.  What is this representation?  From the spectral sequence we see that it carries a filtration such that the associated graded representation is trivial, but since the characteristic of our field divides the order of the group the representation need not be semisimple.

When $N$ is twice a prime power each wide relative spin structure is no longer invariant under the whole of $S_N$.  Instead its stabiliser is a subgroup $S_{N/2} \times S_{N/2}$, and similarly one can ask whether the corresponding representation on Floer cohomology is trivial.
\end{qn}


\subsection{$B$-fileds and non-displaceability}

We end by proving \cref{mthmNonDisp}, namely that for all values of $N$ the Lagrangian $L$ is non-displaceable.  To do this we turn on a $B$-field---a closed complex-valued $2$-form $B$ on $X$.  Cho \cite[Section 2]{ChoNonU}, following Fukaya \cite{FukFam}, describes how the Lagrangian intersection Floer complex for a pair of Lagrangians $L_0$ and $L_1$ can be deformed by such a $B$, if its restriction to each $L_j$ defines an integral cohomology class (i.e.~a class in the image of $H^2(L_j; \Z)$ in $H^2(L; \C)$), and we briefly recap this construction.  See also recent work of Siegel \cite{Siegel}.

Since $[B]$ restricts to an integral class on $L_j$ there exists a complex line bundle $\mathcal{L}^j$ on $L_j$ with $-[B]|_{L_j}$ as its first Chern class, and hence we can put a connection $\nabla^j$ on $\mathcal{L}^j$ whose curvature is $2 \pi i B$.  Just as in the case $B=0$, where the connections $\nabla^j$ are flat and hence define locally constant structures on the $\mathcal{L}^j$, the Floer complex $CF^*(L_0, L_1)$ is generated by the direct sum over intersection points $x \in L_0 \cap L_1$ of the spaces $\operatorname{Hom} (\mathcal{L}^0_x, \mathcal{L}^1_x)$, assuming these intersections are transverse.  The differential counts pseudoholomorphic strips $u$ which contribute according to the parallel transport maps of $(\mathcal{L}^j, \nabla^j)$ along $u|_{t=j}$, but now there is an extra factor of $e^{-2\pi i \int u^* B}$.  By Stokes's theorem this combination of the parallel transports and the exponential factor depends only on the homotopy class of the strip.  The usual continuation map argument shows that if this deformed Floer cohomology is non-zero then $L_1$ is not displaceable from $L_0$ by a Hamiltonian isotopy.

\begin{rmk}
This can be extended to larger collections of Lagrangians and higher $A_\infty$-operations in the obvious way, to give a deformation of the full subcategory of the (monotone) Fukaya category of $X$ comprising those (monotone) Lagrangians on which the class $[B]$ is integral.  The deformation only depends on the image of $[B]$ in $H^2(X; \C) / H^2(X; \Z)$.
\end{rmk}

The analogous construction in the pearl model is to take a single Lagrangian $L$ and equip it with a line bundle $\mathcal{L}$ and connection $\nabla$ of curvature $2 \pi i B$.  A trajectory is then weighted by the combination of boundary monodromy and exponential integral of $-2\pi i B$ for each disc.  This has a more topological description as follows.  The map which sends a smooth $2$-cycle $\sigma$ in $X$ to $e^{-2\pi i \int \sigma^* B}$ defines a class in $\operatorname{Hom}(H_2(X; \Z), \C^*) = H^2(X; \C^*)$, which we denote by $[e^{-2\pi i B}]$.  The condition that $[B]|_L$ is integral can be phrased more cleanly as $[e^{-2 \pi i B}]|_L = 1$ (the identity in $\C^*$), and the choice of $(\mathcal{L}, \nabla)$ (up to isomorphism) corresponds to a lift of $[e^{-2\pi i B}]$ to $H^2(X, L; \C^*)$, which we denote by $[e^{-2\pi i B}]_L$.  The weight attached to a trajectory of class $A$ is then given simply by evaluating $[e^{-2\pi i B}]_L$ on $A$.

Just as changes of spin structure can be realised by changes of local system (as in \cref{SpinLocSys}), changes of relative spin structure can be realised by changes of $B$-field.  Specifically, changing relative spin structure by $\eps \in H^2(X, L; \Z/2)$ is equivalent to multiplying $[e^{-2\pi i B}]_L$ by the class
\[
(-1)^\eps \in H^2(X, L; \{\pm 1\}) \rightarrow H^2(X, L; \C^*).
\]
It is precisely this analogy that will allow us to prove non-displaceability.

\begin{thm}[\cref{mthmNonDisp}]\label{thmNonDisp}
For any $N \geq 3$ our $\PSU(N-1)$-homogeneous Lagrangian $L$ can be made wide over $\C$ by an appropriate choice of $B$-field.  In particular, $L$ is non-displaceable.
\end{thm}
\begin{proof}
Suppose we turn on a $B$-field and choose a lift $[e^{-2\pi i B}]_L$ as above (we have $H^2(L; \C)=0$ so the integrality condition on $[B]|_L$ is vacuous).  Let $b_j \in \C^*$ denote the evaluation of $[e^{-2\pi i B}]_L$ on the basic disc class $A_j$.  In \cref{propWide} we obtained the spectral sequence over $\Lambda_R$ with the non-standard relative spin structure from the spectral sequence over $R[H_2^D]$ with the standard spin structure by sending $T_j$ to $\eps_jT$.  Now, in exactly the same way, we obtain the spectral sequence over $\Lambda_\C$ with the $B$-field deformation from that over $\C[H_2^D]$ by sending $T_j$ to $b_jT$.  The wideness criterion now becomes $e_k(b_1, \dots, b_k)=0$ for $k=1, \dots, N-1$, or equivalently
\[
\prod_{j=1}^N (z+b_j) = z^N + \prod_{j=1}^N b_j
\]
in $\C[z]$, and our task is to show that there exists a choice of the $b_j$ for which this holds.  Taking the $b_j$ to be the $N$th roots of an arbitrary $b \in \C^*$ will do (conversely any solution has this form).
\end{proof}

\begin{rmk}
\label{WideVar}
In \cite[Section 3]{BCEG}, Biran and Cornea define the \emph{wide variety} $\mathcal{W}_2$ of a monotone Lagrangian $L \subset X$ (closed, connected, orientable, and equipped with a choice of spin structure).  In our setting, this is precisely the set
\[
\{(b_1, \dots, b_N) \in (\C^*)^N : L \text{ is wide with the corresponding $B$-field}\}.
\]
The above argument shows that $\mathcal{W}_2$ contains the variety defined by the equations
\begin{equation}
\label{W2eqn1}
e_1(b_1, \dots, b_N) = \dots = e_{N-1}(b_1, \dots, b_N) = 0.
\end{equation}
On the other hand, by \cref{CorCO}, for any point $(b_1, \dots, b_N) \in \mathcal{W}_2$ we have that
\[
e_1(\overline{\eta}-b_1 \cdot 1_L, \dots, \overline{\eta}-b_N \cdot 1_L)=\overline{\eta}
\]
and
\[
e_k(\overline{\eta}-b_1 \cdot 1_L, \dots, \overline{\eta}-b_N \cdot 1_L)=0 \quad \text{for} \quad k=2, \dots, N-1
\]
in $HF^*$.
The former implies that
\[
\overline{\eta} = \frac{e_1(t_1, \dots, t_N)}{N-1} \cdot 1_L,
\]
and substituting into the remaining equations we deduce that
\begin{equation}
\label{W2eqn2}
e_k\lb t_1 - \frac{e_1(t_1, \dots, t_N)}{N-1}, \dots, t_N - \frac{e_1(t_1, \dots, t_N)}{N-1} \rb = 0 \quad \text{for} \quad k=2, \dots, N-1.
\end{equation}
Therefore $\mathcal{W}_2$ lies between the varieties defined by equations \eqref{W2eqn1} and \eqref{W2eqn2}.  We show in \cite{VirtualPearl} that it is in fact given by the latter.
\end{rmk}

\begin{rmk}\label{NonHomogIdeal}
If $e_1(t_1, \dots, t_N)$ is equal to some $\lambda \in \C^*$ then we cannot just apply our earlier wideness argument with the ideal $I_\lambda = (e_1-\lambda, e_2, \dots, e_{N-1})$ in place of $I = (e_1, \dots, e_{N-1})$.  The reason is that $I_\lambda$ is not homogeneous, and the filtration induced on $\Lambda_\lambda \coloneqq \Q[H_2^D]/I_\lambda$ by the grading filtration on $\Q[H_2^D]$ is trivial.  Thus the spectral sequence arising from the induced filtration on the pearl complex over $\Lambda_\lambda$ is zero, and contains no information.
\end{rmk}


\appendix


\section{Orientation computations}\label{appOri}


\subsection{Setup}
\label{sscSetup}

In this appendix we verify the signs in \cref{thmCO}.  So assume throughout that: $L \subset X$ is sharply $K$-homogeneous and equipped with an arbitrary orientation and the standard spin structure; $M \subset X$ is a $K$-invariant complex submanifold of complex codimension $k$ (in practice this is the smooth locus of the invariant subvariety $Z$); $A \in H_2(X, L; \Z)$ is a class of index $2k$; and $y_\mathrm{min}$ is an arbitrary point in $L$.  We need to show that each axial disc $u$ in
\[
M \times_X \mathcal{M}(A, J_\mathrm{std}) \times_L \{y_\mathrm{min}\}
\]
carries a positive sign, where $\mathcal{M} = \mathcal{M}(A, J_\mathrm{std})$ maps to the $L$ on the left by $\ev_0$ and the $L$ on the right by $\ev_1$.

From \cite[Section A.1.8]{BCEG} the sign of $u$ is positive if and only if the map
\[
T_{u(0)}M \oplus T_{u(0)}X \oplus T_u\mathcal{M} \oplus T_{y_\mathrm{min}}L \rightarrow T_{u(0)}X \oplus T_{u(0)}X \oplus T_{y_\mathrm{min}}L \oplus T_{y_\mathrm{min}}L
\]
given by
\[
(v_1, v_2, v_3, v_4) \mapsto (v_1-v_2, v_2-D_u\ev_0 (v_3), D_u\ev_1(v_3)-v_4, v_4)
\]
is orientation-preserving.  Here $TX$ and $TM$ carry their complex orientations, whilst $TL$ carries the orientation we chose on $L$.  Using row and column operations this can be reduced to the orientation sign of
\begin{equation}
\label{TargetSign}
D_u\ev_0 \oplus D_u\ev_1 : T_u\mathcal{M} \rightarrow T_{u(0)}X/T_{u(0)}M \oplus T_{y_\mathrm{min}}L.
\end{equation}
We will explicitly construct a positively-oriented basis of the left-hand side and show that this map is orientation-preserving.


\subsection{Orienting disc moduli spaces}
\label{sscOrientingDiscs}

First we need to understand some properties of the orientations of disc moduli spaces constructed in \cite[Chapter 8]{FOOObig}, so take an arbitrary disc $u$ in $\mathcal{M}$.  Recall from \cref{ModSmooth} that we have a decomposition of bundle pairs
\[
(u^*TX, u|_{\pd D}^*TL) \cong \bigoplus_j (\underline{\C}, z^{\kappa_j/2}\underline{\R}),
\]
where the $\kappa_j \in \Z$ are the partial indices of $u$, all of which are in fact non-negative.  We abbreviate the left-hand side to $(E, F)$.  The tangent space $T_u\mathcal{M}$ is the kernel of the Cauchy--Riemann operator
\[
\conj\pd_{(E, F)} : \Gamma \big( (D, \pd D), (E, F) \big) \rightarrow \Gamma(D, \Omega^{0, 1}(E)).
\]
A choice of orientation and relative spin structure on $L$ induces a homotopy class of trivialisation of $F$, and it is this trivialisation which is used to define the orientation on $T_u \mathcal{M}$.

Roughly speaking, the construction proceeds by degenerating the disc $D$ into a nodal curve $D \cup \C\P^1$ (joined at the point $0$ in each component), itself carrying a bundle pair $(E', F')$ and Cauchy--Riemann operator $\conj\pd_{(E', F')}$, such that $(E, F)$ and $(E', F')$ are identified over a collar neighbourhood of $\pd D$.  Gluing results give a bijection between holomorphic sections of $E'$ over each component of the nodal curve separately, which agree at the joining point, and holomorphic sections of the original bundle $E$.  From this we obtain an isomorphism
\begin{equation}
\label{eqorker}
\ker \conj\pd_{(E, F)} \cong \ker \big( \ker \conj\pd_{(E'|_D, F')} \oplus \ker \conj\pd_{E'|_{\C\P^1}} \rightarrow E'_0 \big),
\end{equation}
where $E_0'$ is the fibre over $0$ and the map on the right-hand side sends a pair of sections $(s_1, s_2)$ to $s_1(0)-s_2(0)$.  Since the disc $u$ is regular (\cref{ModSmooth}) this map is surjective, so we have a short exact sequence
\begin{equation}
\label{eqorSES}
0 \rightarrow \ker \conj\pd_{(E, F)} \rightarrow \ker \conj\pd_{(E'|_D, F')} \oplus \ker \conj\pd_{E'|_{\C\P^1}} \rightarrow E'_0 \rightarrow 0.
\end{equation}

The degeneration is done in such a way that $(E'|_D, F')$ is trivialised by our choice of trivialisation of $F$, giving an identification of $\ker \conj\pd_{(E'|_D, F')}$ with $\R^n$ by evaluating solutions at a boundary point (and reversing the orientation on $L$ switches the orientation of this identification, as claimed in \cref{OrientationReverse}).  The other two spaces appearing on the right-hand side of \eqref{eqorker} carry complex structures and hence canonical orientations.  Putting these three orientations together with the short exact sequence \eqref{eqorSES}, we see that there is an induced orientation on $\ker \conj\pd_{(E, F)}$, which is the space we are really interested in.  The auxiliary choices made do not affect this orientation.

The details of this procedure are technical and unimportant for our purposes, but we note two key properties of the construction:
\begen
\item\label{ori1} If the trivialisation of $F$ extends to a holomorphic trivialisation of $E$ then the bundle pair $(E', F')$ is trivial and $\ker \conj\pd_{(E, F)}$ is oriented directly by its identification with a fibre of $F$ by evaluation at a boundary point.
\item\label{ori2} If $(E, F)$ splits as a direct sum $(E^1, F^1) \oplus (E^2, F^2)$, compatible with the trivialisation of $F$ (i.e.~so that fibrewise, over each point $z$, $F_z^1$ corresponds to the first $k$ components of $F_z \cong \R^n$ and $F_z^2$ to the last $n-k$) then we can make the construction respect this splitting and observe that the identification
\[
\ker \conj\pd_{(E, F)} = \ker \conj\pd_{(E^1, F^1)} \oplus \ker \conj\pd_{(E^2, F^2)}
\]
is orientation-preserving.
\end{enumerate}

All we shall need are these two properties and the following explicit calculation:

\begin{lem}\label{ind2sign} Suppose $(E, F)$ is a rank $1$ Riemann--Hilbert pair of index $2$---which can be identified with the tangent bundle of the pair $(D, \pd D)$---with $F$ oriented.  Evaluation at $0$ and $1$ defines an isomorphism
\[
f : \ker \conj{\pd}_{(E, F)} \xrightarrow{\sim} E_0 \oplus F_1,
\]
and the codomain is oriented by the complex structure on $E_0$ and the choice of orientation on $F$.  This isomorphism is orientation-preserving.
\end{lem}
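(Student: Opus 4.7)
The strategy is to reduce $(E, F)$ to a standard form via Oh's splitting theorem \cite[Theorem I]{Oh}, compute the kernel and evaluation map in explicit coordinates, and then verify that the basis arising from this computation orients $\ker \conj{\pd}_{(E, F)}$ in the sense of FOOO's degeneration recalled in \cref{RH}.

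First I would fix an orientation-preserving isomorphism of Riemann--Hilbert pairs from $(E, F)$ to the standard rank one, Maslov two model $(\C, z\R)$, arranging the orientation of $z\R$ at $z \in \pd D$ to be the positive multiple of $z$.  A Fourier expansion of the boundary condition then identifies $\ker \conj{\pd}_{(\C, z\R)}$ with the three-real-dimensional space of polynomials $s(z) = a + bz + \bar a z^2$ with $a \in \C$, $b \in \R$, and in the basis $\{1 + z^2,\ i(1 - z^2),\ z\}$ the evaluation map at $0$ and $1$, with codomain basis given by the complex structure on $E_0$ and the generator $1$ of $F_1$, has matrix of determinant $+1$.

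It therefore suffices to check that this basis agrees, up to positive scalar, with FOOO's orientation.  I would do this by running the degeneration of \cref{RH}: bubble off the twisting into a sphere component so that $(E, F)$ is replaced by $(E', F')$ on the nodal domain $D \cup_0 \C\P^1$, with trivial pair $(\C, \R)$ over the disc component and $\mathcal{O}(1)$ over $\C\P^1$.  The disc-component kernel is $\R$, oriented by the positive generator of $F$; the sphere-component kernel is $H^0(\C\P^1, \mathcal{O}(1)) \cong \C^2$ with its complex orientation; and these combine via \eqref{eqorSES} to produce the FOOO orientation on $\ker \conj{\pd}_{(\C, z\R)}$, reducing the verification to a single $5 \times 5$ determinant computation against an explicit complement of the kernel inside the direct sum.

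The main obstacle is making the gluing correspondence between holomorphic sections on the nodal domain and polynomials $a + bz + \bar a z^2$ on the original disc fully explicit.  A clean way to handle this is to view the degeneration as a smooth one-parameter family of bundle pairs, so that the determinant line bundle is coherently oriented across the family and the pairing of sections is dictated by continuity through the neck region; equivalently one can match the sphere-component sections of $\mathcal{O}(1)$ against the behaviour of $a + bz + \bar a z^2$ in a shrinking neighbourhood of the node, using that the disc-component section is forced to be the constant term.  Once this identification is pinned down the remaining determinant calculation is routine and returns $+1$, proving the lemma.
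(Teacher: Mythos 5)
Your proposal is correct and uses the same basic ingredients as the paper — Oh's reduction to the standard rank-one Maslov-two model, the FOOO nodal degeneration, and a continuity/coherence argument across the degeneration family — but it allocates the work differently.  You compute $\ker\conj\pd$ and the evaluation map explicitly at the smooth end of the family (the polynomials $a + bz + \bar a z^2$, evaluated at $0$ and $1$, giving a $3\times 3$ determinant $+1$), and then face the task of showing that the chosen basis is positively oriented in FOOO's sense, which requires tracking specific sections through the gluing neck.  The paper instead lets the evaluation marked points vary continuously with $t$ (the interior one converging to $\infty$ on the $\C\P^1$ component), so that the sign of $f^{(t)}$ is manifestly constant and the whole computation can be done at the nodal endpoint, where the kernel splits as $\R$ (disc component) against $H^0(\C\P^1,\mathcal{O}(1))\cong\C^2$ (sphere component) modulo the matching at the node, and the evaluation at $(\infty, 1)$ reads off $(b_1+ib_2, c)$ immediately.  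This buys a small simplification: one only needs the $\Z/2$-valued sign of $f^{(t)}$ to be locally constant, rather than an explicit correspondence of sections across the neck.  Your outline would work as written, but the heuristic in your last paragraph (``the disc-component section is forced to be the constant term'') is delicate — the trivialisation of $F'$ on the disc component that makes the pair trivial already untwists the $z$-dependence, so identifying which coefficient of $a+bz+\bar a z^2$ survives on which component requires care with the rescaling and the chosen collar trivialisation.  The safer route, which you also indicate, is to lean on coherent orientation of the determinant line bundle over the family; once you commit to that, there is no extra saving in doing the $3\times 3$ computation at $t=0$ rather than following the paper and computing directly at $t=1$.
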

\begin{proof}
Let $(E^{(t)}, F^{(t)}) \rightarrow (D^{(t)}, \pd D^{(t)})$ be the family of bundle pairs, parametrised by $t \in [0, 1]$, realising the above degeneration of the domain from $D^{(0)} = D$ to $D^{(1)} = D \cup \C\P^1$.  Note that for all $t < 1$ the domain is biholomorphic to the disc $D$, and we may choose a continuous family $p^{(t)}$ of interior marked points in the domain which converge to the point $\infty$ in $\C\P^1 \subset D^{(1)}$ as $t \rightarrow 1$, as shown in \cref{figDiscDegen} (recall that $0$ in $\C\P^1$ is the point which is glued to $0$ in $D$ to construct $D^{(1)}$).  Similarly we can choose a continuous family $q^{(t)}$ of boundary marked points.
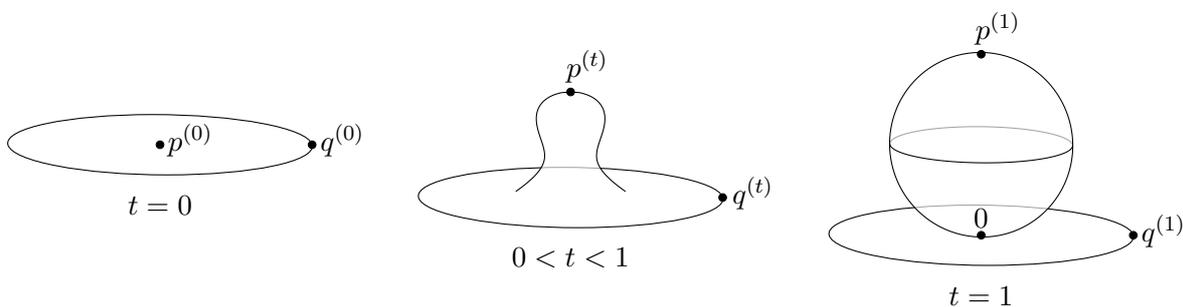
\begin{figure}[ht]
\centering
\begin{tikzpicture}[x=1cm, y={(-0.05cm, 0.2cm)}, z={(0cm, 1cm)}, blob/.style={circle, draw=black, fill=black, inner sep=0, minimum size=\blobsize}]
\def\blobsize{1mm}
\def\r{2}
\def\R{1.2}

\begin{scope}[shift={(-2.7*\r, 0, 0)}]
\draw (0, 0) circle[radius=\r];
\draw (0, 0, 0) node[blob]{};
\draw (0.4, 0, 0.06) node{$p^{(0)}$};
\draw (\r, 0, 0) node[blob]{};
\draw (\r+0.4, 0, 0.06) node{$q^{(0)}$};
\draw (0, 0, -0.4*\r) node{$t=0$};
\end{scope}

\begin{scope}[shift={(0, 0, -0.7)}]
\draw (0, 0) circle[radius=\r];
\draw[domain=-2.2:2.2, samples=200, fill=white, fill opacity=0.65, smooth] plot ({\x<0 ? -0.2*sqrt(8*abs(\x)^3-24*abs(\x)^2+20*abs(\x)) : 0.2*sqrt(8*abs(\x)^3-24*abs(\x)^2+20*abs(\x))}, {0}, {1.4-0.6*abs(\x)});
\draw (0, 0, 1.4) node[blob]{};
\draw (0.22, 0, 1.74) node{$p^{(t)}$};
\draw (\r, 0, 0) node[blob]{};
\draw (\r+0.4, 0, 0.06) node{$q^{(t)}$};
\draw (0, 0, -0.4*\r) node{$0 < t < 1$};
\end{scope}

\begin{scope}[shift={(2.7*\r, 0, -\R)}]
\draw (0, 0) circle[radius=\r];
\sphere{(0, 0, \R)}{\R}
\draw (0, 0, 2*\R) node[blob]{};
\draw ($(0, 0, 2*\R)+(0.22, 0, 0.34)$) node{$p^{(1)}$};
\draw (\r, 0, 0) node[blob]{};
\draw (\r+0.4, 0, 0.06) node{$q^{(1)}$};
\draw (0, 0, 0) node[blob]{};
\draw (0, 0, 0.23) node{$0$};
\draw (0, 0, -0.4*\r) node{$t=1$};
\end{scope}

\end{tikzpicture}
\caption{The degeneration $D^{(t)}$.\label{figDiscDegen}}
\end{figure}

We obtain a continuous family of kernels of the corresponding Cauchy--Riemann operators, which we denote by $\ker \conj{\pd}_t$.  At $t=1$ we impose the condition that the sections over $D$ and $\C\P^1$ agree at $0$, and continuity is in the sense of the above gluing result.  This comes equipped with a family of evaluation maps, giving a continuous family of isomorphisms
\[
f^{(t)} : \ker \conj{\pd}_t \xrightarrow{\sim} E^{(t)}_{p^{(t)}} \oplus F^{(t)}_{q^{(t)}}.
\]
The codomains are naturally oriented, and the definition of the orientation on $\ker \conj{\pd}_0$ is such that the orientation sign of $f^{(t)}$ is constant in $t$.  Taking $p^{(0)} = 0$ and $q^{(0)} = 1$, the statement of the lemma amounts to $f^{(0)}$ being orientation-preserving, so it suffices to show that $f^{(1)}$ is orientation-preserving.

The restriction of the bundle $E^{(1)}$ to the sphere component of $D^{(1)}$ has first Chern class given by half the index of our original Riemann--Hilbert pair, so is isomorphic to $\mathcal{O}(1)$.  We may therefore think of holomorphic sections as affine linear functions on $\C$, say $z \mapsto a+bz$ for complex numbers $a$ and $b$.  Solutions to the Riemann--Hilbert problem on the disc component, meanwhile, are all of the form $c s$, where $c$ is a real number and $s$ is an arbitrary fixed solution with $s(1)$ pointing in the positive direction in $F^{(1)}_1$.  The matching condition at $0$ forces $a=\lambda c$ for some fixed $\lambda \in \C^*$, so if $b=b_1+ib_2$ then from \eqref{eqorSES} we see that $b_1$, $b_2$, $c$ form a positively oriented set of coordinates on $\ker \conj{\pd}'$.  Choosing an appropriate basis vector for $E'_\infty$, the map
\[
f' : \ker \conj{\pd}' \rightarrow E'_\infty \oplus F'_1
\]
can be viewed as sending the point $(b_1, b_2, c)$ to $(b_1+ ib_2, c)$.  It is therefore orientation-preserving, which is exactly what we needed to show, completing the proof.
\end{proof}


\subsection{Splittings for axial discs}
\label{sscSplittings}

Now suppose $u$ is an axial disc with $u(0)$ in $M$, as in \cref{sscSetup}.  Our next goal is to give an explicit splitting of $(E, F)$ into rank $1$ pairs of index $0$ or $2$ and rank $2$ pairs with partial indices $(1, 1)$.  Note that in order for the boundary real subbundle to have a trivialisation it must be orientable, which is equivalent to having even total index, so there is no hope of decomposing the problem further into rank $1$ pairs of index $1$.  This splitting allows us to decompose the orientation problem using property \ref{ori1}, and then apply property \ref{ori2} and \cref{ind2sign} to orient each summand, after degenerating the $(1,1)$-summands into $(2,0)$-summands.

In order to construct the splitting, consider the infinitesimal action of $\mathfrak{g} = \mathfrak{k} \otimes \C$ at $u(0)$ in $M$, and let its kernel be $V$.  Since $M$ is $\mathfrak{g}$-invariant and of complex codimension $k$ we have $\dim_\C V \geq k$.  Pick an $\R$-basis $\xi_1, \dots, \xi_a$ for $V \cap \mathfrak{k}$, and extend to a $\C$-basis $\xi_1, \dots, \xi_a, \eta_1, \dots, \eta_b$ for $V$.  Each $\eta_j$ can be written as $\alpha_j + i \beta_j$ for unique $\alpha_j, \beta_j \in \mathfrak{k}$.

\begin{lem}
The set $\xi_1, \dots, \xi_a, \alpha_1, \dots, \alpha_b, \beta_1, \dots, \beta_b$ is $\R$-linearly independent in $\mathfrak{k}$.
\end{lem}
\begin{proof}
Let $\conj{\phantom{z}}$ denote the conjugate-linear involution of $\mathfrak{g} = \mathfrak{k} \otimes \C$ given by complex conjugation on the $\C$ factor.  The vectors $\xi_1, \dots, \xi_a, \eta_1, \dots, \eta_b, \conj{\eta}_1, \dots, \conj{\eta}_b$ span $V + \conj{V}$ over $\C$, and we claim that they form a basis.  Assuming this, $\xi_1, \dots, \xi_a, \alpha_1, \dots, \alpha_b, \beta_1, \dots, \beta_b$ also form a basis (we can transform between $\eta_j, \conj{\eta}_j$ and $\alpha_j, \beta_j$), so they are linearly independent over $\C$ and hence also over $\R$.

To prove the claim it suffices to show that $\dim_\C (V + \conj{V}) \geq a+2b$, and we know that
\[
\dim_\C (V + \conj{V}) = \dim_\C V + \dim_\C \conj{V} - \dim_\C (V\cap \conj{V}) = 2a+2b - \dim_\C (V\cap \conj{V}),
\]
so it's enough to show that $\dim_\C (V \cap \conj{V}) \leq a$.  In other words, we're done if we can find a set of size $a$ which spans the intersection $V \cap \conj{V}$.  We'll show that $\xi_1, \dots, \xi_a$ works.  Suppose then that $v$ is an element of this intersection, so both $v$ and $\conj{v}$ lie in $V$.  We can write $v$ uniquely in the form $v_\R + i v_{\mathbb{I}}$ with $v_\R$ and $v_\mathbb{I}$ in $\mathfrak{k}$, and we see that both $v_\R + i v_{\mathbb{I}}$ and $v_\R - i v_{\mathbb{I}}$ lie in $V$.  Thus
\[
v_\R = \frac{(v_\R + i v_{\mathbb{I}})+(v_\R - i v_{\mathbb{I}})}{2} \quad \text{and} \quad v_\mathbb{I} = \frac{(v_\R + i v_{\mathbb{I}})-(v_\R - i v_{\mathbb{I}})}{2i}
\]
lie in $V$ as well as in $\mathfrak{k}$.  They therefore lie in the real span of the $\xi_j$, so $v$ itself lies in the complex span, completing the proof.
\end{proof}

We can now pick $\theta_1, \dots, \theta_c$ which extend $\xi_1, \dots, \xi_a, \alpha_1, \dots, \alpha_b, \beta_1, \dots, \beta_b$ to a basis of $\mathfrak{k}$.  We obtain a trivialising frame for $F$
\[
\xi_1 \cdot u , \dots , \xi_a \cdot u, \alpha_1 \cdot u, \beta_1 \cdot u, \dots, \alpha_b \cdot u, \beta_b \cdot u, \theta_1 \cdot u, \dots, \theta_c \cdot u,
\]
where, for example, $\xi_1 \cdot u$ denotes the section $z \mapsto \xi_1 \cdot u(z) \in T_{u(z)}L$.  Since the actual choice of orientation on $L$ is irrelevant we may assume this frame is positively oriented.  The frame is compatible with the global trivialisation of $TL$ defining the standard spin structure, so its homotopy class is precisely that induced by this spin structure.

We can then split $E$ as
\begin{multline}
\label{eqsplit}
\lspan{\frac{\xi_1}{z}\cdot u}_\C \oplus \dots \oplus \lspan{\frac{\xi_a}{z}\cdot u}_\C \oplus \lspan{\frac{(1+z)\alpha_1+i(1-z)\beta_1}{z}\cdot u, \frac{(1+z)\beta_1-i(1-z)\alpha_1}{z}\cdot u}_\C
\\ \oplus \dots \oplus
\lspan{\frac{(1+z)\alpha_b+i(1-z)\beta_b}{z}\cdot u, \frac{(1+z)\beta_b-i(1-z)\alpha_b}{z}\cdot u}_\C \oplus \lspan{\theta_1 \cdot u}_\C \oplus \dots \oplus \lspan{\theta_c \cdot u}_\C,
\end{multline}
where $\lspan{\cdot}_\C$ denotes $\C$-linear span and, for example, $(\xi_1/z)\cdot u$ denotes the holomorphic section of $E$ given by $z \mapsto (\xi_1/z) \cdot u(z)$.  Note that the $a+2b+c = n$ sections of $E$ listed in \eqref{eqsplit} are holomorphic (since the $\xi_j \cdot u$ and $(\alpha_j + i \beta_j) \cdot u$ vanish at $0$), and are fibrewise $\C$-linearly independent.  To see the latter, consider the wedge product (over $\C$) of the sections: it is
\[
\frac{1}{z^{a+b}} (\xi_1 \cdot u) \wedge \dots \wedge (\xi_a \cdot u) \wedge (\alpha_1 \cdot u) \wedge (\beta_1 \cdot u) \wedge \dots \wedge (\alpha_b \cdot u) \wedge (\beta_b \cdot u) \wedge (\theta_1 \cdot u) \wedge \dots \wedge (\theta_c \cdot u),
\]
which is clearly non-zero on $D \setminus \{0\}$.  Since $u$ has index $2k$, this expression vanishes to order $k-(a+b)$ at $0$, but we know that $a+b = \dim_\C V \geq k$.  We thus have equality $a+b=k$ and the sections remain independent at $0$.  We also deduce that $T_{u(0)}M = \mathfrak{g} \cdot u(0)$.

For $j=1, \dots, a+b+c$ let $E^j$ denote the $j$th summand of \eqref{eqsplit}.  For each $j$, the intersection $F^j \coloneqq F \cap E^j|_{\pd D}$ is a subbundle of $F$ whose rank is half the real rank of $E^j$.  Moreover one can write down explicit frames for each $F^j$ in terms of the given trivialisations of the $E^j$:
\begin{gather*}
F^j = z \lspan{\frac{\xi_j}{z} \cdot u}_\R \text{ for } 1 \leq j \leq a
\\ F^{j+a} = z^{1/2} \lspan{\frac{(1+z)\alpha_j+i(1-z)\beta_j}{z}\cdot u, \frac{(1+z)\beta_j-i(1-z)\alpha_j}{z}\cdot u}_\R \text{ for } 1 \leq j \leq b
\\ F^{j+a+b} = \lspan{\theta_j \cdot u}_\R \text{ for } 1 \leq j \leq c,
\end{gather*}
where $\lspan{\cdot}_\R$ denotes $\R$-linear span.  In particular, the splitting of $E$ is in fact a splitting of the pair $(E, F)$ compatible with our trivialisation of $F$, and the first $a$ summands are rank $1$, index $2$; the next $b$ are rank $2$, partial indices $(1, 1)$; the final $c$ are rank $1$, index $0$.  We thus have the desired decomposition of $(E, F)$.


\subsection{Computing the signs}

Now return to the map \eqref{TargetSign} which we hope to prove is orientation-preserving.  Using the above splitting and property \ref{ori1}, $T_u\mathcal{M}$ decomposes as $\oplus_j \ker\conj{\pd}_{(E^j, F^j)}$.  We always order such sums from left to right.  Similarly $T_{y_\mathrm{min}}L$ decomposes as $\oplus_j F^j_1$ (the sum of the fibres over $1 \in \pd D$).  These decompositions are all as oriented vector spaces.

\begin{lem}
$T_{u(0)}M$ decomposes as $\oplus_j (E_0^j \cap T_{u(0)}M)$.
\end{lem}
\begin{proof}
We saw in \cref{sscSplittings} that $T_{u(0)}M = \mathfrak{g} \cdot u(0)$.  By definition, the space $V \subset \mathfrak{g}$ of complex codimension $k$ acts trivially, so giving a basis for $T_{u(0)}M$ is equivalent to giving a basis for $\mathfrak{g}/V$.  Consider the basis $\alpha_1-i\beta_1, \dots, \alpha_b-i\beta_b, \theta_1, \dots, \theta_c$ for the latter.  We claim that the induced splitting of $T_{u(0)}M$ is compatible with the splitting of $E_0$ into the $E_0^j$.

Well, the $\lspan{\theta_j \cdot u(0)}_\C$ summand of $T_{u(0)}M$ clearly lies in the $\lspan{\theta_j \cdot u}_\C$ summand of $E$.  Meanwhile, the
\[
\lspan{\frac{(1+z)\alpha_j+i(1-z)\beta_j}{z}\cdot u, \frac{(1+z)\beta_j-i(1-z)\alpha_j}{z}\cdot u}_\C
\]
summand of $E$ contains the section
\[
\frac{1}{2}\lb\frac{(1+z)\alpha_j+i(1-z)\beta_j}{z}\cdot u\rb - \frac{i}{2}\lb\frac{(1+z)\beta_j-i(1-z)\alpha_j}{z}\cdot u\rb = (\alpha_j - i \beta_j) \cdot u,
\]
and hence contains the $\lspan{(\alpha_j-i\beta_j)\cdot u(0)}_\C$ summand of $T_{u(0)}M$.
\end{proof}

We therefore need to understand the orientation sign of the map
\[
\bigoplus_j \ker \conj\pd_{(E^j, F^j)} \rightarrow \bigg( \bigoplus_j E^j_0\Big/\lb E^j_0 \cap T_{u(0)}Z\rb \bigg) \oplus \bigg( \bigoplus_j F^j_1 \bigg),
\]
which is the product of the orientation signs of the maps
\[
\ker \conj\pd_{(E^j, F^j)} \rightarrow \lb E^j_0 \Big/ \lb E^j_0 \cap T_{u(0)}Z \rb \rb \oplus F^j_1.
\]
Letting $\eps_j$ denote the $j$th sign, the verification of the signs in \cref{thmCO} is completed by

\begin{lem} For all $j$ we have $\eps_j = +1$
\end{lem}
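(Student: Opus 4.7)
The plan is to exploit the compatibility of the Fukaya--Oh--Ohta--Ono orientation construction with direct sums of Riemann--Hilbert pairs (respecting a compatible splitting of the trivialisation of the boundary bundle), which was recalled just before \cref{ind2sign}. Since the splitting \eqref{eqsplit} of $(E, F)$ is a splitting in exactly this sense, and since the subspace $T_{u(0)}Z$ of $E_0$ decomposes as a direct sum $\bigoplus_j (E^j_0 \cap T_{u(0)}Z)$, oriented compatibly by the short exact sequence \eqref{eqSESorex}, the sign of the full evaluation map factorises into the product of the individual $\eps^{(j)}$. It therefore suffices to verify $\eps^{(j)} = +1$ for each of the three types of summand appearing in \eqref{eqsplit}.

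The index-$0$ summands are immediate: $\theta_\ell \cdot u$ furnishes both a holomorphic trivialisation of $E^j$ and an $\R$-trivialisation of $F^j$ compatible with our standard spin frame, so $\ker \conj{\pd}_j$ is oriented directly by evaluation at a boundary point; since $\theta_\ell \cdot u(0) \in \mathfrak{g} \cdot u(0) = T_{u(0)}Z$, the intersection $E^j_0 \cap T_{u(0)}Z$ is all of $E^j_0$, the quotient vanishes, and the map is the identity $\R \to F^j_1$ in this basis. The index-$2$ summands, corresponding to $\xi_\ell \in V \cap \mathfrak{k}$, are handled by \cref{ind2sign} applied to the rank-$1$ sub-pair $(E^j, F^j)$: the frame $\xi_\ell \cdot u$ trivialises $F^j$ in the standard spin homotopy class and $E^j_0 \cap T_{u(0)}Z$ is trivial (its spanning vector, the value at $0$ of $(\xi_\ell/z)\cdot u$, points transversely to $\mathfrak{g} \cdot u(0)$, as it must since the wedge-product computation in \cref{RH} shows these vectors together with $T_{u(0)}Z$ span all of $T_{u(0)}X$), so we need exactly the evaluation-at-$0$-and-$1$ isomorphism of \cref{ind2sign}, which is orientation-preserving.

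The substance of the argument lies in the rank-$2$, partial-indices-$(1,1)$ summands, coming from pairs $\eta_\ell = \alpha_\ell + i\beta_\ell \in V$. Here I would follow the template of \cref{ind2sign} but adapted to this rank-$2$ case: degenerate the domain into $D \cup_0 \C\P^1$ so that solutions over $D$ are the real span of the boundary trivialisation $((1+z)\alpha_\ell + i(1-z)\beta_\ell)/z \cdot u$ and its partner, while solutions over $\C\P^1$ form a complex $2$-dimensional space parametrised by affine sections, and then read off the evaluation map at the marked points in explicit coordinates. The one-complex-dimensional subspace $E^j_0 \cap T_{u(0)}Z$ is spanned by $\eta_\ell \cdot (\text{derivative of } u)$, which is precisely the value at $0$ of the first of the two generating holomorphic sections. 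Quotienting and applying the short exact sequence \eqref{eqorSES} should reduce the computation, as in \cref{ind2sign}, to checking the sign of an explicit block upper-triangular $\R$-linear map whose diagonal blocks are orientation-preserving.

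The hard part will be bookkeeping the orientations in this $(1,1)$ computation: the orientation on $E^j_0/(E^j_0 \cap T_{u(0)}Z)$ is induced from \eqref{eqSESorex} using the complex structure on $T_{u(0)}Z$ restricted to this summand, and this must be matched correctly against the orientation on $\ker \conj{\pd}_j$ coming from the FOOO construction via the ordering conventions in \eqref{eqorSES}. Fortunately, because both $E^j_0 \cap T_{u(0)}Z$ and $F^j_1$ are even-(real-)dimensional in this case, the left-to-right versus right-to-left ambiguity of \cref{rmkSES} is irrelevant, which removes the main potential source of a spurious sign.
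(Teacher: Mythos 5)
Your treatment of the index-$0$ and index-$2$ summands matches the paper's, and the reduction of the global sign to the product of the $\eps^{(j)}$ is exactly the setup established just before the lemma. The problem is in the rank-$2$, partial-indices-$(1,1)$ case, which is the only substantive one. You identify $E^j_0 \cap T_{u(0)}Z$ as the span of the derivative-type vector $w \coloneqq \lim_{z\to 0}(\eta_\ell/z)\cdot u(z)$, ``the value at $0$ of the first of the two generating holomorphic sections''. This is backwards. Since $\eta_\ell=\alpha_\ell+i\beta_\ell$ annihilates $u(0)$ we have $\alpha_\ell\cdot u(0)=-i\beta_\ell\cdot u(0)$, and $E^j_0$ is spanned over $\C$ by $w$ together with $\beta_\ell\cdot u(0)$. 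Of these, $\beta_\ell\cdot u(0)$ lies in $\mathfrak{g}\cdot u(0)=T_{u(0)}Z$, whereas $w$ is precisely the normal direction to $Z$ contributed by this summand: the count $b+c=n-k=\dim_\C T_{u(0)}Z$ forces $E^j_0\cap T_{u(0)}Z=\lspan{\beta_\ell\cdot u(0)}_\C$ and $w\notin T_{u(0)}Z$. (Your own argument for the index-$2$ summands correctly treats the analogous derivative vector $\lim(\xi_\ell/z)\cdot u$ as transverse to $Z$; the $(1,1)$ summand simply contains both a transverse direction and a tangent one, and you have kept the wrong one.) Quotienting $E^j_0$ by the wrong line means the explicit matrix whose sign you propose to compute is not the map of \cref{COsignass}, so the block-triangular calculation you sketch would not establish the lemma.

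Separately, even with the correct quotient, the $(1,1)$ case is left as a plan rather than carried out, and it is the hard part. The paper does not redo the nodal degeneration from scratch in rank $2$: it deforms the matching condition $(\alpha+i\beta)\cdot u(0)=0$ to $(\alpha+ti\beta)\cdot u(0)=0$ and runs $t$ from $1$ to $0$, so that at $t=0$ the pair splits into a rank-$1$ index-$2$ piece and a rank-$1$ index-$0$ piece, both already handled; continuity of the orientation transports the sign back to $t=1$, where the explicit basis \eqref{eq11Bas} visibly maps to a positive basis of $\lb E^j_0/(E^j_0\cap T_{u(0)}Z)\rb\oplus F^j_1$. Your direct route via $D\cup_0\C\P^1$ could in principle be made to work, but you would need to carry out the rank-$2$ analogue of \cref{ind2sign} in full and with the corrected identification of $E^j_0\cap T_{u(0)}Z$.
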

\begin{proof}
Let $\mu_j$ denote the partial indices of $(E^j, F^j)$, and $\conj{\pd}_j$ the Cauchy--Riemann operator.  If $\mu_j = 2$ then the intersection $E^j_0 \cap T_{u(0)}Z$ is zero and the result follows from \cref{ind2sign}.  On the other hand, if $\mu_j = 0$ then $(E^j, F^j) = (\C \theta \cdot u, \R \theta \cdot u)$, where $\theta = \theta_{j-(a+b)} \in \mathfrak{k}$, and the space $E^j_0 / (E^j_0 \cap T_{u(0)}Z)$ is zero.  A positively oriented basis for $\ker \conj{\pd}_j$ is given by $\theta \cdot u$, whilst a positively oriented basis for $F^j_1$ is given by $\theta \cdot u(1)$, so the required map is indeed orientation-preserving.

Finally suppose $\mu_j = (1, 1)$, so $E^j$ and $F^j$ are spanned by appropriate expressions built from $\alpha = \alpha_{j-a}$ and $\beta = \beta_{j-a}$, with $(\alpha + i \beta) \cdot u(0) = 0$.  To find a positively oriented basis for $\ker \conj{\pd}_j$ we degenerate our $(1, 1)$ pair to a $(2, 0)$ as follows.  Suppose we replace the condition $(\alpha + i \beta) \cdot u(0) = 0$ by $(\alpha + ti \beta) \cdot u(0) = 0$, and deform the real parameter $t$ from $1$ down to $0$.  We obtain a family of Riemann--Hilbert pairs over $[0, 1]$, and bases for the kernels of the corresponding Cauchy--Riemann operators are given by
\begin{equation}
\label{eq11Bas}
\frac{(1+z^2)\alpha + ti(1-z^2)\beta)}{z} \cdot u, \frac{i(1-z^2)\alpha - t(1+z^2)\beta)}{z} \cdot u, \alpha \cdot u, \beta \cdot u.
\end{equation}
By \cref{ind2sign}, and the fact that $\alpha \cdot u(1)$, $\beta \cdot u(1)$ is positively oriented as a basis of $F^j_1$, this basis is positively oriented in the limit $t \rightarrow 0$.  Hence by continuity of the orientation construction it is also positively oriented at $t=1$.

Now (restricting again to the $t=1$ situation, which is what we actually care about) consider the evaluation map to
\[
\lb E^j_0 \Big/ \lb E^j_0 \cap T_{u(0)}Z \rb \rb \oplus F^j_1 = \lb\lspan{\lim_{z \rightarrow 0} \frac{\alpha + i\beta}{z}\cdot u(z), \beta\cdot u(0)} \Big/ \lspan{\beta \cdot u(0)}\rb \oplus \lspan{\alpha \cdot u(1), \beta \cdot u(1)}_\R.
\]
The basis \eqref{eq11Bas} is sent to a positively oriented basis for this space, and so $\eps_j = +1$ as claimed.
\end{proof}

\bibliography{homogbiblio}
\bibliographystyle{utcapsor2}

\end{document}